\documentclass[11pt]{amsart}
\usepackage[a4paper, margin=2cm, top=2.2cm, headsep=0.75cm, bottom=1.8cm, footskip=0.9cm]{geometry}             
\usepackage[dvipsnames]{xcolor}   		             		
\usepackage{graphicx}
\usepackage{amssymb,mathrsfs}
\usepackage{amsthm,amsmath,stmaryrd}
\usepackage{tikz}
\usepackage{tikz-cd}
\usetikzlibrary{shapes, shapes.geometric, calc, decorations.markings}
\usepackage{upgreek, enumitem}
\usepackage{bm}
\usepackage{mathtools}
\usepackage[all]{xy}
\usepackage{caption}
\usepackage{array,booktabs}

\setlength{\parskip}{4pt}

\DeclareMathAlphabet{\mathpzc}{OT1}{pzc}{m}{it}

\usepackage{hyperref}
\hypersetup{
  colorlinks   = true,          
  urlcolor     = blue,          
  linkcolor    = purple,          
  citecolor   = blue             
}

\usepackage[capitalize]{cleveref}

\newtheorem{mainthm}{Theorem}
\crefname{mainthm}{Theorem}{Theorems}
\newtheorem{maincor}[mainthm]{Corollary}
\crefname{maincor}{Corollary}{Corollaries}

\crefname{phenomenon}{Phenomenon}{Phenomena}

\newtheorem{theorem}{Theorem}[section]
\newtheorem{corollary}[theorem]{Corollary}
\newtheorem{lemma}[theorem]{Lemma}
\newtheorem{proposition}[theorem]{Proposition}

\theoremstyle{definition}
\newtheorem{definition}[theorem]{Definition}
\newtheorem{example}[theorem]{Example}

\theoremstyle{remark}
\newtheorem{rem1}[theorem]{Remark}
\newenvironment{remark}{\begin{rem1}\em}{\end{rem1}}

\title[Quantum cohomology and Fukaya summands from monotone Lagrangian tori]{Quantum cohomology and Fukaya summands \\ from monotone Lagrangian tori}

\author{Jack Smith}
\address{St John's College, Cambridge, CB2 1TP, United Kingdom}
\email{\href{mailto:j.smith@dpmms.cam.ac.uk}{j.smith@dpmms.cam.ac.uk}}

\renewcommand{\phi}{\varphi}
\newcommand{\eps}{\varepsilon}
\newcommand{\lb}{\left(}
\newcommand{\rb}{\right)}
\newcommand{\id}{\operatorname{id}}

\newcommand{\CC}{\mathbb{C}}
\newcommand{\RR}{\mathbb{R}}
\newcommand{\ZZ}{\mathbb{Z}}
\newcommand{\QQ}{\mathbb{Q}}
\newcommand{\PP}{\mathbb{P}}
\newcommand{\LL}{L^\flat}
\newcommand{\hatS}{\widehat{S}}

\newcommand{\hatCO}{\widehat{\CO}{}^0_\bL}

\newcommand{\hatB}{\widehat{\calB}}
\newcommand{\hatE}{\widehat{\calE}}
\newcommand{\hatPhi}{\widehat{\Phi}}
\newcommand{\hatTheta}{\widehat{\Theta}_{\LL}}

\newcommand{\calA}{\mathcal{A}}
\newcommand{\calB}{\mathcal{B}}
\newcommand{\calC}{\mathcal{C}}
\newcommand{\calD}{\mathcal{D}}
\newcommand{\calE}{\mathcal{E}}

\newcommand{\calG}{\mathcal{G}}
\newcommand{\calL}{\mathcal{L}}

\newcommand{\calQ}{\mathcal{Q}}

\newcommand{\calT}{\mathcal{T}}

\setenumerate{label=(\roman*)}

\newcommand{\kk}{\mathbf{k}}
\newcommand{\Char}{\operatorname{char}}

\newcommand{\m}{\mathfrak{m}}

\renewcommand{\mod}{\mathbin{\mathrm{mod}}}

\newcommand{\bL}{\mathbf{L}}

\newcommand{\op}{\mathrm{op}}

\newcommand{\mf}{\mathrm{mf}}

\newcommand{\Cl}{\operatorname{\mathnormal{C\kern-0.15ex \ell}}}
\newcommand{\diff}{\mathrm{d}}
\newcommand{\rH}{\operatorname{H}}

\newcommand{\HH}{\operatorname{HH}}
\newcommand{\rCC}{\operatorname{CC}}

\newcommand{\HF}{\operatorname{HF}}
\newcommand{\Fuk}{\mathcal{F}}

\newcommand{\CF}{\operatorname{CF}}

\newcommand{\Spec}{\operatorname{Spec}}

\newcommand{\LMF}[1][]{\operatorname{\mathcal{LM}}_{#1}}

\newcommand{\End}{\operatorname{End}}

\newcommand{\CO}{\operatorname{\mathcal{CO}}}

\newcommand{\OC}{\operatorname{\mathcal{OC}}}
\newcommand{\QH}{\operatorname{QH}}
\newcommand{\SH}{\operatorname{SH}}
\newcommand{\Jac}{\operatorname{Jac}}
\newcommand{\isol}{\mathrm{isol}}
\newcommand{\Jacisol}{\operatorname{Jac}_\isol}
\newcommand{\COisol}{\CO^0_{\bL, \isol}}
\newcommand{\qprod}{\mathbin{\star}}
\newcommand{\xto}[1]{\xrightarrow{\ \ #1 \ \ }}

\newcommand{\Shk}[2]{\langle #1, #2 \rangle_\mathrm{Shk}}
\newcommand{\Poinc}[2]{\langle #1, #2 \rangle_X}
\newcommand{\Tw}{\operatorname{Tw}}
\newcommand{\oPi}{\operatorname{\Pi}}
\newcommand{\PiTw}{\oPi \Tw}

\begin{document}

\begin{abstract}
Let $L$ be a monotone Lagrangian torus inside a compact symplectic manifold $X$, with superpotential $W_L$.  We show that a geometrically-defined closed--open map induces a decomposition of the quantum cohomology $\QH^*(X)$ into a product, where one factor is the localisation of the Jacobian ring $\Jac W_L$ at the set of isolated critical points of $W_L$.  The proof involves describing the summands of the Fukaya category corresponding to this factor---verifying the expectations of mirror symmetry---and establishing an automatic generation criterion in the style of Ganatra and Sanda, which may be of independent interest.  We apply our results to understanding the structure of quantum cohomology and to constraining the possible superpotentials of monotone tori.
\end{abstract}

\maketitle


\section{Introduction}
\label{secIntroduction}


\subsection{Statement of results}
\label{sscStatement}

Let $(X, \omega)$ be a compact symplectic manifold of dimension $2n$, and let $L \subset X$ be a monotone Lagrangian torus.  The goal of this paper is to explore the relationship between the quantum cohomology of $X$ and a Laurent polynomial---the \emph{superpotential}---associated to $L$.

Recall that monotonicity of $L$ means the existence of some $\tau > 0$ such that the Maslov index and area homomorphisms $\mu, \omega : \pi_2(X, L) \to \RR$ satisfy $\omega = \tau\mu$.  This is the relative analogue of being Fano, and allows us to construct the quantum cohomology $\QH^*(X)$ and the Floer cohomology $\HF^*(L, L)$ of $L$ with itself, over an arbitrary ground field $\kk$, using only classical transversality techniques.

These cohomology groups are connected by a $\ZZ/2$-graded unital $\kk$-algebra homomorphism
\[
\CO^0_L : \QH^*(X) \to \HF^*(L, L),
\]
which is defined on an arbitrary class $\alpha \in \QH^*(X)$ roughly as follows.  Pick a Poincar\'e dual cycle $Z \subset X$, and consider pseudoholomorphic discs in $X$ with boundary on $L$ which map an interior marked point to $Z$.  Look at the pseudocycle on $L$ swept out by a boundary marked point on such discs, and take its Poincar\'e dual class in $\HF^*(L, L)$.  This is the \emph{length-zero closed--open string map}.

More generally, one can equip $L$ with a rank-$1$ local system $\calL$ over $\kk$, and consider the same constructions for the pair $\LL = (L, \calL)$ instead of $L$.  Incorporating $\calL$ here means that every count of pseudoholomorphic discs---in the definition of the Floer differential, the Floer product, and the closed--open string map---is weighted by the monodromy of $\calL$ around each disc boundary.  Different choices of $\calL$ allow us to extract different information.  For example, $\HF^*(\LL, \LL)$ typically vanishes for most choices of $\calL$, but is non-zero when $\calL$ takes specific values.  And the map $\CO^0_{\LL}$ usually `sees' different parts of $\QH^*(X)$ for different choices of $\calL$.

\begin{example}
If $\kk = \CC$ and $L$ is the Clifford torus in $X = \CC\PP^n$, defined by
\[
L = \{[x_0 : \dots : x_n] : \lvert x_0 \rvert = \dots = \lvert x_n \rvert\},
\]
then by \cite{Cho} we have $\HF^*(\LL, \LL) \neq 0$ for exactly $n+1$ choices of $\calL$, namely those which are invariant under permutations of the homogeneous coordinates.  Moreover, if $H \in \rH^2(X)$ denotes the hyperplane class then we have
\[
\QH^*(X) = \CC[H] / (H^{n+1} - 1) \cong \prod_{\mathclap{\substack{\zeta \text{ such that}\\ \zeta^{n+1} = 1}}} \CC[H] / (H - \zeta).
\]
The projection maps onto the $n+1$ factors of this splitting can be realised by the maps $\CO^0_{\LL}$ for the $n+1$ interesting choices of $\calL$.
\end{example}

For a general monotone torus $L$, its Floer theory is controlled by its superpotential
\[
W_L \in \ZZ[\rH_1(L; \ZZ)].
\]
This counts rigid pseudoholomorphic discs $u$ which send a boundary marked point to a chosen point $p$ in $L$, weighted by the monomial $z^{[\partial u]}$ associated to the boundary homology class, and by a sign which we will not dwell on.  A standard cobordism argument shows that the resulting $W_L$ is independent of auxiliary data, if these are chosen suitably generically.  We will view $W_L$ as an element of $S = \kk[\rH_1(L; \ZZ)]$, and hence as a function on the space $\rH^1(L; \kk^\times) = \Spec S$ of rank-$1$ local systems on $L$ over $\kk$.  It is well-known \cite[Section 13]{ChoOh}, \cite[Proposition 3.3.1]{BiranCorneaLTEG} that $\HF^*(\LL, \LL)$ is non-zero if and only if $\calL$ is a critical point of $W_L$ when viewed in this way.  Explicitly, choosing a basis $\gamma_1, \dots, \gamma_n$ for $\rH_1(L; \ZZ)$ gives an identification $S \cong \kk[z_1^{\pm 1}, \dots, z_n^{\pm 1}]$, where $z_i = z^{\gamma_i}$, under which $W_L$ becomes a Laurent polynomial in the $z_i$ and the local system $\calL$ corresponds to the point whose $z_i$-coordinate is the monodromy of $\calL$ around $\gamma_i$.  The critical point condition is then that all partial derivatives $\partial W_L / \partial z_i$ vanish at $\calL$.

To extract the information of all $\calL$ at once, we can work over $S$ instead of $\kk$, and weight each disc $u$ by $z^{[\partial u]}$ instead of by the monodromy of $\calL$ around $\partial u$.  We denote the resulting Floer cohomology algebra and closed--open string map by
\begin{equation}
\label{eqCObL}
\CO^0_\bL : \QH^*(X) \to \HF^*_S(\bL, \bL).
\end{equation}
By the same calculations from \cite{BiranCorneaLTEG}, the $S$-subalgebra of $\HF^*_S(\bL, \bL)$ generated by the unit is the Jacobian ring $\Jac W_L = S / (\partial W / \partial z_1, \dots, \partial W / \partial z_n)$.  For each $\calL$, corresponding to a maximal ideal $\m$ in $S$ with residue field $\kk$, the map $\CO^0_{\LL}$ can be recovered from $\CO^0_\bL$ by reducing modulo $\m$ at chain level.

We call a critical point $\calL$ \emph{isolated} if any of the equivalent conditions in \cref{lemIsolatedTFAE} holds, and it is these $\calL$ that are relevant for our purposes.  Let $S_\isol$ denote the localisation of $S$ at the set of isolated critical points, i.e.~the ring obtained from $S$ by adjoining a multiplicative inverse to any function which is non-zero at every such $\calL$.  Similarly, let $\Jacisol W_L$ and $\CO^0_{\bL, \isol} : \QH^*(X) \to \HF^*_\isol(\bL, \bL)$ denote the localisations of $\Jac W_L$ and \eqref{eqCObL} respectively at the set of isolated critical points.  Equivalently one could apply the operation ${-} \otimes_S S_\isol$ to $\Jac W_L$ and \eqref{eqCObL}.  Since localisation is exact, it doesn't matter whether we localise at chain level or after passing to cohomology.  Our main result is the following.

\begin{mainthm}[\cref{corThmA}]
\label{TheoremA}
As an $S_\isol$-algebra, $\HF^*_\isol(\bL, \bL)$ is generated by the unit and is equal to $\Jacisol W_L$.  The $\ZZ/2$-graded unital $\kk$-algebra homomorphism
\[
\CO^0_{\bL, \isol} : \QH^*(X) \to \HF^*_\isol(\bL, \bL) = \Jacisol W_L
\]
is surjective, and gives rise to an orthogonal decomposition
\[
\QH^*(X) = K \times K^\perp
\]
(of $\ZZ/2$-graded unital $\kk$-algebras) with respect to the Poincar\'e pairing.  Here $K = \ker \CO^0_{\bL, \isol}$ and $\CO^0_{\bL, \isol}$ induces an isomorphism $K^\perp \to \Jacisol W_L$.
\end{mainthm}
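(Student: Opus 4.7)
My plan is to prove the four assertions of the theorem in three steps, with the open--closed string map $\OC_\bL : \HF^*_S(\bL, \bL) \to \QH^*(X)$ and its $\isol$-localisation $\OC_{\bL,\isol}$ serving, alongside a Cardy-type relation, as the crucial tools.

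First, for the identification $\HF^*_\isol(\bL,\bL) = \Jacisol W_L$, I would work locally at each isolated critical point $\calL$. We already know that $\Jac W_L$ embeds into $\HF^*_S(\bL,\bL)$ as the $S$-subalgebra generated by the unit, and by \cref{lemIsolatedTFAE} the stalk of $\Jac W_L$ at an isolated critical point is a finite-dimensional local $\kk$-algebra. A local computation identifying $\HF^*(\LL,\LL)$ with this stalk---which follows from the classical description of $\HF^*(\LL,\LL)$ as the Clifford algebra of the Hessian at Morse critical points, extended via an appropriate deformation-theoretic model at degenerate isolated ones---forces the inclusion above to be an equality after localising at the isolated critical locus.

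The remaining assertions---surjectivity of $\CO^0_{\bL,\isol}$, the orthogonal decomposition, and the isomorphism $K^\perp \cong \Jacisol W_L$---are best handled together, using the Cardy-type identity
\[
\CO^0_\bL \circ \OC_\bL = \Frob \cdot \id_{\HF^*_S(\bL,\bL)},
\]
where $\Frob \in \Jac W_L$ is a canonical Hessian-like element, together with the adjointness
\[
\langle \CO^0_\bL(\alpha), x \rangle_{\mathrm{HF}} = \langle \alpha, \OC_\bL(x) \rangle_X
\]
relating the Poincar\'e pairing on $\QH^*(X)$ to a trace pairing on $\HF^*_S(\bL,\bL)$. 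The key observation is that $\Frob$ is invertible in $\Jacisol W_L$: at Morse critical points this is the classical Hessian-nonvanishing statement, and for general isolated critical points it should follow from one of the equivalent characterisations in \cref{lemIsolatedTFAE}. Consequently $\OC_{\bL,\isol}$ is split-injective, its image $V \subseteq \QH^*(X)$ is a subalgebra carried isomorphically by $\CO^0_{\bL,\isol}$ onto $\HF^*_\isol(\bL,\bL) = \Jacisol W_L$, and the idempotent $e = \OC_{\bL,\isol} \circ \Frob^{-1} \circ \CO^0_{\bL,\isol} \in \End(\QH^*(X))$ yields the algebra splitting $\QH^*(X) = V \times \ker e$.

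The orthogonality of $V$ and $\ker e$ under the Poincar\'e pairing is automatic from adjointness: for $\alpha \in \ker \CO^0_{\bL,\isol} = K$ and $v = \OC_{\bL,\isol}(x) \in V$, we have $\langle v, \alpha \rangle_X = \langle x, \CO^0_{\bL,\isol}(\alpha) \rangle_{\mathrm{HF}} = 0$. A finite-dimensional rank count---possible because $\Jacisol W_L$ is a finite product of local Artinian $\kk$-algebras---then forces $V = K^\perp$ exactly, and hence $\ker e = K$. The main obstacle, I anticipate, is establishing the invertibility of $\Frob$ in $\Jacisol W_L$ at degenerate isolated critical points: at Morse points this is the classical Hessian-determinant calculation, but in general it should require the automatic generation criterion in the style of Ganatra--Sanda advertised in the abstract.
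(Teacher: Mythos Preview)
Your approach has two genuine gaps.

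First, the claim that $\HF^*(\LL,\LL)$ (over $\kk$, with fixed local system $\calL$) is identified with the stalk $\Jac_\calL W_L$ is incorrect: $\HF^*(\LL,\LL)$ is the $2^n$-dimensional Clifford algebra on the Hessian of $W_L$ at $\calL$, whereas $\Jac_\calL W_L$ is the Milnor algebra (one-dimensional at a Morse point, for instance). What is true, and what the paper proves in \cref{lemJacisol}, is that the \emph{$S_\calL$-coefficient} Floer cohomology $\HF^*_\calL(\bL,\bL)$ equals $\Jac_\calL W_L$. The argument is not deformation-theoretic: the Oh spectral sequence has $E_1$ page equal to the Koszul complex on $(\partial W_L/\partial z_1,\dots,\partial W_L/\partial z_n)$ over $S_\calL$, and isolatedness of $\calL$ forces this sequence to be regular (dimension count in a regular local ring), so the complex has cohomology concentrated in degree $0$, equal to $\Jac_\calL W_L$.

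Second, and more seriously, the Cardy strategy $\CO^0_\bL \circ \OC_\bL = \Frob \cdot \id$ with $\Frob$ a ``Hessian-like element'' cannot close as you hope. If $\Frob$ is the (image of the) Hessian determinant---as your Morse-point discussion suggests---then for a degenerate isolated critical point it lies in the socle of the local Artinian ring $\Jac_\calL W_L$ and is \emph{never} a unit unless $\calL$ is Morse. So the step ``$\Frob$ is invertible in $\Jacisol W_L$'' fails precisely where the theorem goes beyond Sanda's non-degenerate case. You correctly flag this as the main obstacle and gesture towards the generation criterion, but the paper does not use that criterion to rescue any such invertibility. Instead it bypasses Cardy on $\HF^*_S$ entirely and works at the level of Hochschild (co)homology of the $A_\infty$-algebra $\calA = \CF^*(\LL,\LL)$: it proves $\calA$ is homologically smooth by comparing it, via the localised mirror functor, to a matrix factorisation category where Dyckerhoff's results apply (\cref{propAsmooth}); smoothness plus properness makes the Shklyarov pairing on $\HH_*(\calA)$ perfect; since $\OC_\calA$ is an isometry for this pairing against the Poincar\'e pairing, it is injective, and the splitting of $\QH^*(X)$ follows from \cref{thmGen}. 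Orthogonality of the pieces coming from distinct isolated critical points is argued separately (\cref{propIdempotentsOrthogonal}), and finally the categorical map $\CO_{\LL_i}:\QH^*(X)\to\HH^*(\calA_i)$ is identified with the geometric $\CO^0_{\bL,\calL_i}$ via a companion result (\cref{propphii}). No element $\Frob$ enters the argument.
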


\begin{maincor}
\label{CorollaryB}
$\QH^*(X)$ has $\Jacisol W_L$ as a factor, in even degree.\hfill$\qed$
\end{maincor}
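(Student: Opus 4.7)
The plan is to read the corollary off directly from Theorem~A, with the only work being a check on gradings. Theorem~A already supplies an orthogonal splitting $\QH^*(X) = K \times K^\perp$ of $\ZZ/2$-graded unital $\kk$-algebras together with an algebra isomorphism $K^\perp \xrightarrow{\sim} \Jacisol W_L$ induced by $\CO^0_{\bL, \isol}$, so $\Jacisol W_L$ automatically appears as a factor of $\QH^*(X)$ and it only remains to verify that this factor lies in the even part of the $\ZZ/2$-grading.

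To see this, I would argue that $\Jacisol W_L$ is concentrated in degree $0$ of the $\ZZ/2$-grading on $\HF^*_\isol(\bL, \bL)$. The unit of $\HF^*_\isol(\bL, \bL)$ sits in degree $0$, and the coefficient ring $S_\isol$ acts through multiplication by the unit, so it too is concentrated in degree $0$. By the first assertion of Theorem~A, $\Jacisol W_L$ coincides with the $S_\isol$-subalgebra generated by the unit, hence is wholly in degree $0$. Transporting back through the grading-preserving isomorphism $K^\perp \cong \Jacisol W_L$ shows that $K^\perp$ sits in the even part of $\QH^*(X)$, giving the corollary.

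There is no genuine obstacle here: Theorem~A is doing all of the substantive work, and the content of the corollary is purely the bookkeeping observation above.
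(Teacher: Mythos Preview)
Your proposal is correct and matches the paper's approach exactly: the paper marks Corollary~B with an immediate $\qed$, treating it as a direct consequence of Theorem~A, and your argument does the same while spelling out the (routine) grading check that $\Jacisol W_L$ sits in degree~$0$ because it is the $S_\isol$-span of the unit.
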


\begin{remark}
This result has a natural interpretation in terms of mirror symmetry, which also explains why it is necessary to remove non-isolated critical points; see \cref{sscMS}.
\end{remark}

\begin{remark}
Sanda \cite[Section 5.1]{Sanda} proves a version of \cref{CorollaryB} but for the localisation of $\Jac W_L$ at the \emph{non-degenerate} (i.e.~Morse) critical points.  In this case, the resulting localisation is simply a product of copies of $\kk$, one for each non-degenerate critical point.  Our result strengthens this to allow degenerate isolated critical points, corresponding to more complicated factors of $\QH^*(X)$.
\end{remark}

Because $\HF^*_\isol(\bL, \bL)$ is generated by the unit, which is Poincar\'e dual to any point $p$ in $L$, the map $\CO^0_{\bL, \isol}$ in \cref{TheoremA} has a particularly nice geometric description: given an input $\alpha \in \QH^*(X)$, pick a Poincar\'e dual cycle $Z$ as above, and count rigid pseudoholomorphic discs which send an interior marked point to $Z$ and a boundary marked point to $p$, weighted by boundary class monomial in $S_\isol$.  As usual, one must also weight by a sign, and use suitably generic auxiliary data.  If $Z$ has (real) codimension-$2$ and is disjoint from $L$ then it defines a Poincar\'e dual class $\widetilde{\alpha} \in \rH^2(X, L; \kk)$ and $\CO^0_{\bL, \isol}(\alpha)$ can be computed by counting the same discs $u$ as $W_L$, but additionally weighted by the pairing $\langle [u], \widetilde{\alpha} \rangle$.  In particular, using the well-known argument of Auroux--Kontsevich--Seidel \cite[Lemma 6.7]{AurouxTDuality}, we see that $\CO^0_{\bL, \isol}(c_1(X))$ is the image of $W_L$ in $\Jacisol W_L$.

\begin{example}
Returning to the Clifford torus in $\CC\PP^n$, for a suitable choice of basis of $\rH_1(L; \ZZ)$ we have
\[
W_L = z_1 + \dots + z_n + \frac{1}{z_1 \cdots z_n}.
\]
We therefore have, over any field $\kk$, that
\[
\Jac W_L = \kk[z_1^{\pm 1}, \dots, z_n^{\pm 1}] \Big/ \Big(1 - \frac{1}{z_1^2 z_2 \cdots z_n}, \dots, 1 - \frac{1}{z_1 \cdots z_{n-1} z_n^2} \Big) \cong \kk[z] / (z^{n+1} - 1),
\]
with each $z_i$ sent to $z$.  All critical points are isolated, so $\Jacisol W_L = \Jac W_L$, and we have
\[
\CO^0_{\bL, \isol}((n+1)H) = \CO^0_{\bL, \isol}(c_1(X)) = W_L = z + \dots + z + \frac{1}{z^n} = (n+1)z.
\]
Dividing through by $n+1$ then gives $\CO^0_{\bL, \isol}(H) = z$.  (If the characteristic of $\kk$ divides $n+1$ then we can make the argument over $\ZZ$ instead of $\kk$, divide through by $n+1$ there, and only then tensor with $\kk$.  Alternatively we can compute $\CO^0_{\bL, \isol}(H)$ directly, by representing $H$ by a coordinate hyperplane $Z = \{x_i = 0\}$ and obtaining $\CO^0_{\bL, \isol}(H) = z_i = z$.)  We conclude that the map
\[
\CO^0_{\bL, \isol} : \QH^*(X) = \kk[H]/(H^{n+1} - 1) \to \HF^*_\isol(\bL, \bL) = \kk[z]/(z^{n+1} - 1)
\]
is an isomorphism, and induces the trivial splitting of $\QH^*(X)$.  This is true more generally when $L$ is a monotone toric fibre \cite{FOOOMirror}.
\end{example}

Perhaps surprisingly, the proof of \cref{TheoremA} goes via the (monotone) Fukaya category, as follows.  We first modify Sanda's proof of \cite[Theorem 1.1]{Sanda} to establish an automatic split-generation result, \cref{thmGen}, without assuming existence of a cyclic structure on the category.  This may be of independent interest.  Ganatra states a similar (more general) result, without proof, in \cite[Remark 35]{GanatraAutomaticGeneration}; see also Ganatra--Perutz--Sheridan \cite[Theorem 5.2]{GanatraPerutzSheridan}.  We then apply this in \cref{sscLMF}, using a description of the $A_\infty$-structure on $\CF^*(\LL, \LL)$ from \cite{SmithSuperfiltered} and a smoothness result of Dyckerhoff \cite[Section 7]{Dyckerhoff}, to show that for each isolated critical point $\calL_i$ the object $\LL_i = (L, \calL_i)$ split-generates a summand of the Fukaya category and has an associated factor $Q_i$ in $\QH^*(X)$.  After arguing in \cref{sscCombining} that the different $Q_i$ are pairwise orthogonal, in \cref{sscCOL} we use results from \cite{SmithHHviaHF} to interpret the resulting splitting of $\QH^*(X)$ in terms of $\CO^0_{\bL, \isol}$.

More precisely, recall that for each $\lambda \in \kk$ Sheridan \cite{SheridanFano} constructs a $\ZZ/2$-graded $\kk$-linear monotone Fukaya category $\Fuk(X)_\lambda$.  We will assume that the category has been completed with respect to cones and summands, i.e.~that we have passed to the the split-closure of its pre-triangulated envelope, in the language of \cite{SeidelBook}; see \cref{sscGeneration} for more details.  For each choice of $\calL$, the pair $\LL = (L, \calL)$ defines an object in $\Fuk(X)_\lambda$ for $\lambda = W_L(\calL)$, and by the above discussion this object is non-zero if and only if $\calL$ is a critical point of $W_L$.  Given a factor $Q$ of $\QH^*(X)$, there is an associated full subcategory $\Fuk(X)_{\lambda, Q}$ of $\Fuk(X)_\lambda$, comprising those objects $K$ for which $\CO^0_K(Q)$ contains the unit in $\HF^*(K, K)$; see \cite[Appendix A]{SmithHHviaHF} for an exposition of this notion.  The formal version of the previous paragraph would then say that $\LL_i$ is contained in and split-generates $\Fuk(X)_{\lambda_i, Q_i}$, where $\lambda_i = W_L(\calL_i)$.

\begin{remark}
\label{rmkArtinian}
Since $\QH^*(X)$ is Artinian (because it's finite-dimensional over $\kk$), it can be decomposed as a product of Artinian local rings $\calQ_1 \times \dots \times \calQ_m$, which can be decomposed no further.  The categories corresponding to different $\calQ_j$ are orthogonal, so to understand $\Fuk(X)_{\lambda}$ one may as well understand each of the summands $\Fuk(X)_{\lambda, \calQ_j}$.  In our case, each of the factors $Q_i$ is local so coincides with one of the $\calQ_j$.

If we assume that $\Char \kk \neq 2$, or if we require the Lagrangians in our Fukaya category to be orientable, then we can drop the subscript $\lambda$ from our notation, as follows.  Consider the $\kk$-linear endomorphism of $\QH^*(X)$ given by quantum multiplication by $c_1(X)$.  We denote this by $c_1 \qprod$.  Each $\calQ_j$ is contained in a generalised eigenspace of $c_1 \qprod$, say with eigenvalue $\nu_j$, otherwise it would decompose as a product of its intersections with different generalised eigenspaces, contradicting locality.  We then have that $\Fuk(X)_{\lambda, \calQ_j} = 0$ unless $\lambda = \nu_j$; see \cite[Appendix A]{SmithHHviaHF} for further discussion.  We could therefore abbreviate $\Fuk(X)_{\nu_j, \calQ_j}$ to $\Fuk(X)_{\calQ_j}$, and write
\[
\Fuk(X)_\lambda = \bigoplus_{\mathclap{\substack{j \text{ such that} \\ \nu_j = \lambda}}} \Fuk(X)_{\calQ_j}.
\]
\end{remark}

As a by-product of our arguments we actually give a complete description of the categories arising.

\begin{mainthm}[\cref{propCategoryDescriptions}]
\label{TheoremC}
For each $i$, the category $\Fuk(X)_{\lambda_i, Q_i}$ split-generated by $\LL_i$ is quasi-equivalent, as a $\ZZ/2$-graded $A_\infty$-category over $\kk$, to $\mf(\hatS, W_L - \lambda_i)$.  This is the dg-category of matrix factorisations of $W_L - \lambda_i$ over $\hatS$, which is defined to be the completion of $S$ at $\calL_i$.  Under this quasi-equivalence, $\LL_i$ is sent to the matrix factorisation corresponding to the skyscraper sheaf at the unique closed point of $\Spec \hatS$.
\end{mainthm}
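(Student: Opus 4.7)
The plan is to reduce the quasi-equivalence to an identification of two $A_\infty$-endomorphism algebras, and then invoke split-generation on each side. On the symplectic side, the earlier portion of the proof (via the automatic generation criterion \cref{thmGen}) has already established that $\LL_i$ split-generates $\Fuk(X)_{\lambda_i, Q_i}$, so the $A_\infty$-Yoneda embedding gives a quasi-equivalence
\[
\Fuk(X)_{\lambda_i, Q_i} \simeq \PiTw \bigl(\End(\LL_i)\bigr).
\]
On the algebraic side, Dyckerhoff's smoothness result \cite[Section 7]{Dyckerhoff} implies that $\mf(\hatS, W_L - \lambda_i)$ is split-generated by the skyscraper $\calS_i$ at the unique closed point of $\Spec \hatS$, so the analogous Yoneda argument identifies this category with $\PiTw(\End(\calS_i))$, where $\End(\calS_i)$ is a completed Koszul-type dg-algebra with cohomology the completed localised Jacobian $\hatS/(\partial W_L/\partial z_1, \dots, \partial W_L/\partial z_n)$. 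The task thereby reduces to constructing a unital $\ZZ/2$-graded $A_\infty$-quasi-isomorphism $\End(\LL_i) \simeq \End(\calS_i)$ that sends unit to unit.

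Next I would construct this quasi-isomorphism using the description of the $A_\infty$-structure on $\CF^*(\bL, \bL)$ from \cite{SmithSuperfiltered}. That description exhibits a minimal model for $\CF^*(\bL, \bL)$ of Clifford/exterior type whose higher operations are controlled by the formal derivatives of $W_L$. After base change to $\hatS$, the underlying graded algebra matches that of $\End(\calS_i)$, and both $A_\infty$-structures encode the same formal germ of $W_L - \lambda_i$ at $\calL_i$. The desired quasi-isomorphism then follows from a rigidity argument: Dyckerhoff's smoothness ensures that the $A_\infty$-deformations of the standard Koszul model on this exterior algebra are classified precisely by the germ of a superpotential modulo coordinate change, so a matching of the formal data of $W_L$ on the two sides forces the two minimal $A_\infty$-algebras to be quasi-isomorphic.

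The main obstacle is precisely this last matching at the level of higher operations. At the level of cohomology, or at leading quadratic order, the Hessian of $W_L$ is recovered essentially for free from the ring identification $\HF^*_\isol(\bL, \bL) = \Jacisol W_L$ of \cref{TheoremA}; but to upgrade to a full $A_\infty$-quasi-isomorphism one must see the entire formal germ of $W_L$ through the Floer operations, not just its two-jet. This is exactly the content provided by \cite{SmithSuperfiltered}, whose output has to be aligned carefully with Dyckerhoff's deformation classification, with completion at $\calL_i$ playing the role of isolating the critical germ from the rest of $\Spec S$. Once done, functoriality of $\PiTw$ produces the desired quasi-equivalence of categories, and tracking the Yoneda image of $\LL_i$ under the two identifications shows that it is sent to $\calS_i$, completing the proof of \cref{TheoremC}.
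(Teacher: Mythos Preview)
Your reduction to an $A_\infty$-quasi-isomorphism of endomorphism algebras, followed by passage to $\PiTw$, is the right shape, but there are two problems.

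First, a logical ordering issue. You invoke \cref{thmGen} at the outset to conclude that $\LL_i$ split-generates $\Fuk(X)_{\lambda_i, Q_i}$. But \cref{thmGen} requires $\calA_i = \CF^*(\LL_i, \LL_i)$ to be homologically smooth, and in the paper smoothness (\cref{propAsmooth}) is deduced \emph{from} \cref{propCategoryDescriptions}, via Morita invariance and Dyckerhoff's smoothness of $\mf(S_\calL, W_L - \lambda)$. So as written your argument is circular. It can be repaired---establish the quasi-isomorphism $\calA_i \simeq \End(\hatE_i)$ first, deduce smoothness from that, and only then apply \cref{thmGen}---but you should make that reordering explicit rather than assuming split-generation as already known.

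Second, and more seriously, your construction of the quasi-isomorphism $\calA_i \simeq \End(\calS_i)$ via a ``rigidity argument'' is not substantiated. You assert that $A_\infty$-deformations of the Koszul model on the exterior algebra are classified by germs of superpotentials up to coordinate change, and attribute this to Dyckerhoff; no such classification appears in \cite{Dyckerhoff}, and this is not something one can simply cite. The paper takes a concrete and quite different route: the quasi-isomorphism is produced by the Cho--Hong--Lau localised mirror functor \cite{ChoHongLauTorus}, and \cite[Theorem~5]{SmithSuperfiltered} is precisely the statement that this functor induces a quasi-isomorphism $\Phi : \calA \to \calB$ on endomorphism algebras (this is the real content of \cite{SmithSuperfiltered} here, not merely a minimal-model description to be matched abstractly). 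One then passes from $S$ to $S_\calL$ and $\hatS$ via \cref{lemBqis}, invokes Dyckerhoff's split-generation \cite[Corollary~5.3]{Dyckerhoff} over $S_\calL$ (this is separate from his smoothness result), and finally uses \cite[Theorem~5.7]{Dyckerhoff} to identify idempotent completion with ring completion, landing in $\mf(\hatS, W_L - \lambda)$. Your proposed deformation-theoretic matching, while heuristically in the right spirit, bypasses the explicit functor that actually carries the weight of the argument.
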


This is again consistent with the mirror symmetry picture described in \cref{sscMS}.

\begin{definition}
\label{defToroidal}
A \emph{toroidal subcategory} is a summand $\Fuk(X)_{\lambda, Q}$ of the monotone Fukaya category that is split-generated by a monotone Lagrangian torus equipped with an isolated critical point of its superpotential, which we call a \emph{toroidal generator}.  So in our notation the toroidal subcategories are the $\Fuk(X)_{\lambda_i, Q_i}$, and the $\LL_i$ are toroidal generators.
\end{definition}

\Cref{TheoremC} leads to the following automatic split-generation result for toroidal subcategories, which is of a somewhat different flavour from \cref{thmGen}.

\begin{mainthm}[\cref{propSegal}]
\label{TheoremD}
Each toroidal subcategory is split-generated by any non-zero object.
\end{mainthm}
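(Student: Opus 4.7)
The plan is to leverage \cref{TheoremC}, which identifies the toroidal subcategory $\Fuk(X)_{\lambda_i, Q_i}$ with the matrix factorisation category $\mf(\hatS, W_L - \lambda_i)$. The problem thus reduces to a purely algebraic one: showing that every non-zero object of $\mf(\hatS, W_L - \lambda_i)$ split-generates the category.

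Here $\hatS$ is a complete Noetherian local ring, $W_L - \lambda_i$ lies in its maximal ideal (since $\lambda_i = W_L(\calL_i)$), and the isolation assumption on $\calL_i$ ensures that $W_L - \lambda_i$ defines an isolated hypersurface singularity at the closed point of $\Spec \hatS$. The key step is then to invoke---or prove, following Segal---a structural result on such matrix factorisation categories: for a complete Noetherian local ring with an isolated hypersurface singularity, the idempotent-completed homotopy category of the dg-category of matrix factorisations admits no non-trivial proper thick subcategories closed under summands. Equivalently, any non-zero object is a split-generator.

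A self-contained derivation would go as follows. The residue-field stabilisation $k^{\mathrm{stab}}$ already split-generates $\mf(\hatS, W_L - \lambda_i)$, by the Dyckerhoff smoothness theorem used in \cref{sscLMF}. So it suffices to show that $k^{\mathrm{stab}}$ lies in the split-closed triangulated hull of an arbitrary non-zero object $N$. By Krull--Schmidt for matrix factorisations over a complete local ring one may assume $N$ indecomposable. The Hom-complex $\Hom(k^{\mathrm{stab}}, N)$ is non-zero---else $N$ is right-orthogonal to a split-generator and hence contractible---and one uses the $\ZZ/2$-periodicity together with the finite-dimensional self-duality of the Hom-pairing on $\mf(\hatS, W_L - \lambda_i)$ to exhibit $k^{\mathrm{stab}}$ as a summand of a bounded cone built from shifts of $N$. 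Carrying out this last step precisely is the main obstacle; once it is done, Theorem D follows formally from \cref{TheoremC}.
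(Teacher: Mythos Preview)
Your reduction via \cref{TheoremC} to the category $\mf(\hatS, W_L - \lambda_i)$ is exactly what the paper does, and you correctly identify that the remaining task is to show that any non-zero object $N$ split-generates the residue-field stabilisation $k^{\mathrm{stab}}$ (equivalently $\hatE$).  But your sketch of this last step is not a proof: you invoke Krull--Schmidt and non-vanishing of $\Hom(k^{\mathrm{stab}}, N)$, and then assert that ``$\ZZ/2$-periodicity together with the finite-dimensional self-duality of the Hom-pairing'' lets one exhibit $k^{\mathrm{stab}}$ as a summand of a cone built from $N$.  You yourself flag this as ``the main obstacle,'' and indeed it is: nothing in what you have written explains how to pass from a non-zero morphism to an explicit cone sequence whose summand is $k^{\mathrm{stab}}$.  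Non-vanishing of $\Hom$ in both directions is a necessary condition for split-generation, not a sufficient one, and the vague appeal to duality does not bridge that gap.

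The paper's argument (credited to Segal) replaces this missing step with a short, direct construction.  Given a non-zero object $\calG$, consider $i_*i^*\calG$, where $i$ is the inclusion of the closed point of $\Spec \hatS$.  Since $i^*\calG$ lives over the residue field it is a direct sum of shifts of $\hatS/\m$, so $i_*i^*\calG$ is a direct sum of shifts of $\hatE$; in particular it split-generates $\hatE$.  On the other hand, $i_*i^*\calG$ is the (derived) tensor product of $\calG$ with the Koszul resolution of the skyscraper sheaf, hence a twisted complex built from copies of $\calG$; so $\calG$ split-generates $i_*i^*\calG$.  Chaining these together, $\calG$ split-generates $\hatE$, and Dyckerhoff's result finishes the proof.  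This is the concrete mechanism your outline is lacking.
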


A similar phenomenon was observed by Sheridan \cite[Corollary 2.19]{SheridanFano} for summands $\Fuk(X)_{\lambda, Q}$ in the case where $Q$ is $1$-dimensional over $\kk$, and by Evans--Lekili \cite[Remark 1.2.3]{EvansLekiliGeneration} in the context of subcategories split-generated by homogeneous Lagrangians.  As noted in \cite[p158]{EvansLekiliGeneration}, one can view this as a form of $A_\infty$- or categorical-semisimplicity.

We conjecture that any two toroidal generators of a toroidal subcategory are quasi-isomorphic, up to a shift.  We have not been able to prove this, but we have the following partial result.

\begin{mainthm}[\cref{propToriIsomc}]
\label{TheoremE}
For any two toroidal generators $T_1$ and $T_2$ of a given toroidal subcategory, we have a quasi-isomorphism
\[
T_1^{\oplus 2^{n-1}} \oplus T_1[1]^{\oplus 2^{n-1}} \simeq T_2^{\oplus 2^{n-1}} \oplus T_2[1]^{\oplus 2^{n-1}}.
\]
\end{mainthm}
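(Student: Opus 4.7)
\emph{Proof Plan.} The approach is to transport the question to the matrix-factorisation model furnished by \cref{TheoremC} and exploit its Krull--Schmidt structure. Fix toroidal generators $T_1 = (L_1, \calL_1)$ and $T_2 = (L_2, \calL_2)$ of the common toroidal subcategory $\Fuk(X)_{\lambda_i, Q_i}$. Applying \cref{TheoremC} with $T_1$ as the distinguished generator yields a quasi-equivalence
\[
\Phi : \Fuk(X)_{\lambda_i, Q_i} \xrightarrow{\sim} \mf(\hatS_1, W_{L_1} - \lambda_i),
\]
where $\hatS_1$ is the completion of $\kk[\rH_1(L_1; \ZZ)]$ at $\calL_1$, sending $T_1$ to the skyscraper matrix factorisation $\mathrm{sky}$: this is the Koszul factorisation at the unique closed point, whose underlying free $\hatS_1$-module has rank $2^{n-1}$ in each $\ZZ/2$-parity. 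Let $M := \Phi(T_2)$; by \cref{TheoremD} this is a split-generator of $\mf(\hatS_1, W_{L_1} - \lambda_i)$, and because $T_2$ is a Lagrangian torus with non-vanishing Floer cohomology, its graded endomorphism dimension is $\dim_\kk \HF^*(T_2, T_2) = \dim_\kk \rH^*(L_2; \kk) = 2^n$, with $2^{n-1}$ in each parity.

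The plan is then to establish the core identity
\[
M^{\oplus 2^{n-1}} \oplus M[1]^{\oplus 2^{n-1}} \simeq \mathrm{sky}^{\oplus 2^{n-1}} \oplus \mathrm{sky}[1]^{\oplus 2^{n-1}}
\]
inside $\mf(\hatS_1, W_{L_1} - \lambda_i)$. The analogous statement for $T_1$ in place of $T_2$ is tautological since $\Phi(T_1) = \mathrm{sky}$, so pulling both instances back along $\Phi$ would yield the required quasi-isomorphism in $\Fuk(X)_{\lambda_i, Q_i}$ by composition.

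To prove the identity I would proceed as follows. Since $\hatS_1$ is a complete Noetherian local ring, $\mf(\hatS_1, W_{L_1} - \lambda_i)$ is Krull--Schmidt, so the task reduces to matching multisets of indecomposables. Both sides are of the form $X \otimes_\kk V$ for $V$ a graded $\kk$-space of total dimension $2^n$ with $2^{n-1}$ in each parity; moreover the dimension count above, together with compactness of $T_1$ and $T_2$, gives matching graded ranks of $\Hom(-, \mathrm{sky})$ applied to each side. A Morita-style interpretation should make this precise: for a split-generator $T$ of a pre-triangulated, split-closed category $\calC$, the object $T \otimes_\kk \HF^*(T, T)$ corresponds under $\calC \simeq \mathrm{perf}(\End(T))^{\op}$ to the free rank-one module, and this "free bimodule" is an intrinsic invariant of the subcategory rather than of the generator chosen to represent it. Comparing this intrinsic description applied to $T_1$ and to $T_2$ produces the quasi-isomorphism, after rewriting $T \otimes_\kk \HF^*(T, T)$ in $\ZZ/2$-graded form as $T^{\oplus 2^{n-1}} \oplus T[1]^{\oplus 2^{n-1}}$ using the parity distribution of the exterior algebra on $n$ generators.

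The main obstacle will be upgrading the $K$-theoretic/graded-dimension equality into an actual dg-quasi-isomorphism: one must control the full $A_\infty$-bimodule $\Hom(\mathrm{sky}, M)$ and not merely its cohomology, to ensure that the Krull--Schmidt data of the two sides genuinely coincide. Working over a complete local hypersurface with isolated singularity should keep this tractable, as every MF is determined up to isomorphism by finite-rank matrices over $\hatS_1$ and the indecomposables are controlled by their underlying $\hatS_1/(W_{L_1} - \lambda_i)$-module structure; but it is precisely here that the genuine work of the proof is concentrated.
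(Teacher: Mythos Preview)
Your reduction to the matrix-factorisation model and the observation that both sides have matching graded $\Hom$-dimensions are correct, but the proposed mechanism for upgrading this to a quasi-isomorphism does not work. The Morita claim is wrong as stated: under the Yoneda equivalence $\calC \simeq \mathrm{perf}(\End(T))^\op$, it is $T$ itself---not $T \otimes_\kk \HF^*(T,T)$---that corresponds to the free rank-one module. The object $T \otimes_\kk \HF^*(T,T)$ corresponds to a free module of rank $2^n$, and ``free of rank $2^n$'' is not a Morita-invariant notion: under the equivalence $\mathrm{perf}(\End(T_1)) \simeq \mathrm{perf}(\End(T_2))$ the free $\End(T_1)$-module of rank one is sent to the bimodule $\Hom(T_1,T_2)$, not to $\End(T_2)$. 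So there is no intrinsic ``free bimodule object'' in $\calC$ to which both $T_i \otimes_\kk \HF^*(T_i,T_i)$ are canonically identified. You are right that the genuine work is concentrated in the passage from numerical equality to isomorphism, but the proposal contains no idea for how to do it; Krull--Schmidt alone buys nothing without first identifying the indecomposable summands of $M$.

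The paper's argument is quite different and supplies exactly the missing geometric input. Writing $\calT = \Phi(T_2)$ and letting $i : \Spec(\hatS/\m) \hookrightarrow \Spec \hatS$ be the closed point, one computes $i_*i^*\calT$ in two ways. On one hand, $i^*\calT$ is a sum of copies of $\hatS/\m$ and its shift, so $i_*i^*\calT \simeq \hatE^{\oplus l} \oplus \hatE[1]^{\oplus l}$ for some $l$; an endomorphism-dimension count then forces $l = 2^{n-1}$. On the other hand, $i_*i^*\calT$ is $\calT$ tensored with the Koszul resolution of $\hatS/\m$, i.e.~a twisted complex on $2^{n-1}$ copies of $\calT$ and $2^{n-1}$ copies of $\calT[1]$ with connecting maps given by multiplication by elements of $\m$. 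The crucial step---and the idea you are missing---is that these connecting maps are \emph{nullhomotopic} on $\calT$, so the twisted complex splits as $\calT^{\oplus 2^{n-1}} \oplus \calT[1]^{\oplus 2^{n-1}}$. This nullhomotopy is proved by noting that the scalar action of $\hatS$ on objects factors through $\HH^*(\calC) \cong \Jac_\calL W_L$, and that the maximal ideal of $\Jac_\calL W_L$ acts by zero on any toroidal generator; the latter holds for $\hatE$ by direct computation and then for $\calT$ by symmetry of the roles of $T_1$ and $T_2$. It is this use of the $\HH^*$-action, together with the symmetry between the two toroidal generators, that converts the $K$-theoretic equality into an honest splitting.
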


\begin{remark}
The potentially-mysterious form of \cref{TheoremE} arises from tensoring one of the $T_i$ with a Koszul complex, which has $2^{n-1}$ terms in degree $0 \mod 2$ and $2^{n-1}$ terms in degree $1 \mod 2$.
\end{remark}

Before further discussion and application of our results, we make a few technical comments.

\begin{remark}
\label{rmkFukTechnical}
\begin{enumerate}
\item Each Lagrangian in the Fukaya category should be equipped with a $\ZZ/2$-grading with respect to a fixed $\ZZ/2$-grading of $X$, in the sense of \cite{SeidelGraded}.  If we only care about consequences for $\QH^*(X)$ then we can equip $X$ with its canonical $\ZZ/2$-grading, in which case a $\ZZ/2$-grading of a Lagrangian is just an orientation (in particular, the torus $L$ has a $\ZZ/2$-grading).
\item Each Lagrangian should also be equipped with a relative spin structure, to determine the signs of pseudoholomorphic disc counts.  In particular, the superpotential $W_L$ of the torus $L$ depends on the choice of relative spin structure on $L$.  Happily there is a canonical choice in this case, namely the unique spin structure induced by picking an arbitrary identification $L \cong \RR^n / \ZZ^n$ and using this to trivialise $TL$.
\item To do monotone Floer theory with multiple Lagrangians, we need to assume that the fundamental group of each has trivial image in $\pi_1(X)$.  For all statements involving the Fukaya category, therefore, we should impose the additional hypothesis on each Lagrangian $K$ that $\pi_1(K)$ has trivial image in $\pi_1(X)$.  (Alternatively one could weaken these conditions to be about $\rH_1$ instead of $\pi_1$, at the expense of strengthening the monotonicity condition to be in terms of relative $\rH_2$ rather than relative $\pi_2$.)  \cref{TheoremA}, and therefore also \cref{CorollaryB}, does not require this hypothesis since it only relies on Floer theory of $L$ with itself.
\item We can twist $\QH^*(X)$ and the Fukaya category by any homomorphism $B : \rH_2(X; \ZZ) \to \kk^\times$, and our results have an analogue where the local system on each Lagrangian $K$ is replaced by a lift of $B$ to $\rH_2(X, K; \ZZ) \to \kk^\times$.  We need to assume that such a lift exists for our torus $L$, i.e.~that that restriction of $B$ to the image of $\rH_2(L; \ZZ)$ is trivial.  We leave the details to the interested reader.
\end{enumerate}
\end{remark}

\begin{remark}
One could attempt to extend our results---in particular \cref{CorollaryB}---to non-monotone tori, but this would run into several difficulties:
\begin{enumerate}
\item Suitable technical foundations for the Fukaya category are needed.  These should include establishing the properties described in \cref{sscStringMaps,sscwpCY}, except for \cref{propEspace} which is not expected to hold in general.  For simplicity we will also assume that the category constructed is strictly unital.
\item The appropriate version of the superpotential is now somewhat different.  We should work over a Novikov field $\Lambda$ of characteristic $0$, with positive-filtration part $\Lambda_{>0}$, and assume that $\rH^1(L; \Lambda_{>0})$ embeds in the space of weak bounding cochains for $\LL = (L, \calL)$.  This means that there exists a function $\mathfrak{PO} : \rH^1(L; \Lambda_{>0}) \to \Lambda_{>0}$ such that for all $b \in \rH^1(L; \Lambda_{>0})$ we have
\[
\sum_{k \geq 0} \mu^k(b, \dots, b) = \mathfrak{PO}(b) \cdot e_{\LL},
\]
where the $\mu^k$ are the $A_\infty$-operations on $\CF^*(\LL, \LL)$ and $e_{\LL}$ is the strict unit.  This function $\mathfrak{PO}$ takes the place of the formal expansion of $W_L$ about $\calL$.
\item It then makes sense to talk about `isolated critical points of $W_L$', meaning those $\calL$ for which $0$ is an isolated critical point of $\mathfrak{PO}$.  For such $\calL$ we have that $\HF^*(\LL, \LL)$ is non-zero, and given by the Clifford algebra corresponding to the Hessian of $\mathfrak{PO}$ at $0$.  However, the crucial input to our automatic split-generation result, \cref{thmGen}, is that the $A_\infty$-algebra $\calA = \CF^*(\LL, \LL)$ is homologically smooth.  In the monotone case we establish this by relating $\calA$ to the endomorphism algebra of a matrix factorisation using results from \cite{SmithSuperfiltered}, which rely on $\calA$ being a \emph{superfiltered} deformation of the exterior algebra.  Essentially this means that $\calA$ looks like $\rH^*(L)$ but with correction terms added to its $A_\infty$-operations which respect the $\ZZ/2$-grading and decrease degree with respect to the $\ZZ$-grading.  Monotonicity ensures that $\calA$ has this form.  We conjecture, based on the heuristic picture outlined in \cref{sscMS}, that the even in the non-monotone case the algebra $\CF^*(\LL, \LL)$ is smooth at isolated critical points, but one would need new techniques to establish this.
\item Because of the absence of \cref{propEspace} a new technique would be needed to show that factors $Q_i$ of $\QH^*(X)$ with different $\lambda_i$ values are orthogonal.
\end{enumerate}
\end{remark}


\subsection{Mirror symmetry picture}
\label{sscMS}

It is expected that in favourable situations $X$ is mirror to a Landau--Ginzburg model $(Y, W)$, where $Y$ is a variety over $\kk$ and $W$ is a regular function $Y \to \kk$.  Concretely, this means that the Fukaya category $\Fuk(X)_\lambda$ should be quasi-equivalent to the matrix factorisation category $\mf(Y, W-\lambda)$.  The Lagrangian torus $L$ should be mirror to to an open subvariety $U \cong \Spec S \cong (\kk^\times)^n$ of $Y$, such that $W|_U$ coincides with $W_L$.  Moreover, the object $\LL = (L, \calL)$ should be mirror to the matrix factorisation corresponding to the skyscraper sheaf at the point $\calL$ in $\Spec S$.  \cref{TheoremC} verifies this prediction at isolated critical points of $W_L$.  (Both $\LL$ and the matrix factorisation are zero if $\calL$ is not a critical point.)

The quantum cohomology of $X$ is expected (again, in favourable situations) to be isomorphic to the Hochschild cohomology of the Fukaya category.  More precisely, for each $\lambda$, the generalised $\lambda$-eigenspace of $c_1\qprod$ acting on $\QH^*(X)$ is expected to be isomorphic to $\HH^*(\Fuk(X)_\lambda)$ via the full (as opposed to length-zero) closed--open string map
\[
\CO_\lambda : \QH^*(X) \to \HH^*(\Fuk(X)_\lambda).
\]
Under mirror symmetry the right-hand side is $\HH^*(\mf(Y, W-\lambda))$, and this was computed by Lin--Pomerleano \cite[Theorem 3.1]{LinPomerleano} to be the hypercohomology of the complex $(\Lambda^* TY, [W-\lambda, \bullet])$, where $[\bullet, \bullet]$ is the Schouten--Nijenhuis bracket.  Each isolated critical point $p$ of $W$ with critical value $\lambda$ thus gives rise to a factor in $\HH^*(\mf(Y, W-\lambda))$ which looks like the local Jacobian ring of $W$ at $p$.  Focusing on those isolated critical points contained in $U$ then gives rise to \cref{CorollaryB}.

In general, $W_L$ may have non-isolated critical points.  For example, by \cite[Proposition 4.22]{PascaleffTonkonog} if $5 \leq k \leq 8$ then there is a monotone Lagrangian torus in the (monotone) $k$-point blowup of $\CC\PP^2$ whose superpotential has a $1$-dimensional critical locus.  In the notation of \cite[Table 1]{PascaleffTonkonog} this locus is given by $\{x=-1\} \cup \{y=-1\}$ for $k=5$ and by $\{x+y = -1\}$ for $k \geq 6$.  To compute the contribution to $\QH^*(X)$ from a positive-dimensional critical locus $C$ of $W_L$, mirror symmetry suggests that one has to understand the corresponding critical locus $\overline{C}$ of $W$ in $Y$.  Typically $\overline{C}$ will be a (partial) compactification of $C$, and one cannot calculate its contribution to $\QH^*(X)$ from $C$ alone.  So there is no reason to expect a simple relationship between $\QH^*(X)$ and the full Jacobian ring $\Jac W_L$.  Moreover, there is a very mundane reason why neither the localisation of $\Jac W_L$ at a non-isolated critical point nor the coordinate ring of any component of $\Spec \Jac W_L$ containing a non-isolated critical point can appear as a factor in $\QH^*(X)$: both are infinite-dimensional over $\kk$ by \cref{corNonIsolated}.

\begin{remark}
An important perspective on the picture sketched above is via the \emph{Borman--Sheridan class}.  Roughly speaking, suppose $L$ lies in the complement of an anticanonical divisor $D$, and is exact in $X \setminus D$.  Let $N \cong T^*L \subset X \setminus D$ be a Weinstein neighbourhood of $L$.  Tonkonog \cite{TonkonogSHtoLEG} shows that there exists a class $\mathcal{BS}$ in the symplectic cohomology $\SH^0(X \setminus D)$ which is sent to $1 \otimes W_L$ under the Viterbo restriction map
\[
\SH^*(X \setminus D) \to \SH^*(N) \cong \rH_{n-*}(\Lambda L) \cong \rH_{n-*}(L \times \Omega L) \cong \rH^*(L) \otimes \kk[\rH_1(L; \ZZ)].
\]
Here $\Lambda L$ and $\Omega L$ denote the free- and based-loop spaces of $L$ respectively.  On the other hand, Borman--Sheridan--Varolgunes \cite{BormanSheridanVarolgunes} and Borman--El Alami--Sheridan \cite{BormanElAlamiSheridanMaurerCartan} show that in good situations $\QH^*(X)$ can be obtained as a deformation of $\SH^*(X \setminus D)$, with deformation defined by Lie bracket with the class $\mathcal{BS}$.

The mirror symmetry interpretation of these results is that $Y = \Spec \SH^0(X \setminus D)$ is mirror to $X \setminus D$, with
\[
\HH^*(\operatorname{D^bCoh}(Y)) \cong \HH^*(\mathcal{W}(X \setminus D)) \cong \SH^*(X \setminus D),
\]
where $\mathcal{W}$ denotes the wrapped Fukaya category.  The function $W$ is then given by $\mathcal{BS}$, viewed as an element of the coordinate ring of $Y$.  The chart $U$ corresponds to $\Spec \SH^0(N)$, and Tonkonog's result corresponds to our assertion that $W|_U = W_L$.  Thinking of $W-\lambda$ as an element of $\HH^{\mathrm{even}}(\operatorname{D^bCoh}(Y))$, it defines a $\ZZ/2$-graded deformation of $\operatorname{D^bCoh}(Y)$ to $\mf(Y, W-\lambda)$, and this is mirror to deforming $\mathcal{W}(X \setminus D)$ to $\Fuk(X)_\lambda$ by compactifying $X \setminus D$ to $X$.  At the level of Hochschild cohomology, this deformation should be equivalent to turning on a differential given by `bracket with $W$', and this is what Borman, Sheridan, et al show.

It would be interesting to explore how critical points of $W_L$ are reflected in the class $\mathcal{BS}$.
\end{remark}


\subsection{Constraints on quantum cohomology from monotone tori}

\Cref{TheoremA,CorollaryB} mean that one can deduce results about the structure of $\QH^*(X)$ from the presence of a monotone torus $L \subset X$ and knowledge of $W_L$.  Sanda uses this approach in \cite{Sanda} to construct factors of $\kk$ in $\QH^*(X)$ from non-degenerate critical points of $W_L$, and from (even-dimensional, oriented, spin) Lagrangian rational homology spheres.  With our results we can now identify larger factors.

\begin{example}
\label{exQuadricQH}
The quadric threefold $X$ contains a monotone torus $L$ whose superpotential
\[
W_L = x+y+z+\frac{1}{xy}+\frac{1}{yz} \in \kk[x^{\pm1}, y^{\pm1}, z^{\pm1}] \cong S
\]
can be computed by toric degeneration \cite[Remark 7.1.3]{EvansLekiliGeneration}, \cite[Theorem 1]{NishinouNoharaUeda}.  If $\Char \kk = 2$ then $\Jac W_L$ vanishes and we don't learn anything, so assume from now on that $\Char \kk \neq 2$.  We then have
\[
\Jacisol W_L = \Jac W_L \cong \kk[y]/(y^3-4).
\]
So by \cref{CorollaryB} $\QH^*(X)$ contains a $3$-dimensional factor isomorphic to $\kk[y]/(y^3-4)$.  Since $\QH^*(X)$ is $4$-dimensional, we conclude that there is an algebra isomorphism
\[
\QH^*(X) \cong \kk[x] / (y^3 - 4) \times \kk.
\]
If $\Char \kk = 3$ then the first factor is local and does not decompose any further.

We can actually upgrade this to a complete description of $\QH^*(X)$, still assuming $\Char \kk \neq 2$, as follows.  Since the minimal Chern number of $X$ is $3$, we can equip $\QH^*(X)$ with a $\ZZ/6$-grading.  With respect to this grading $\QH^*(X)$ is $2$-dimensional in degree $0$, and $1$-dimensional in degrees $2$ and $4$, with $\QH^2(X)$ generated by $H$.  Therefore, for degree reasons, we have $H^4 = \nu H$ for some $\nu \in \kk$.  Applying $\COisol$ to this equation, we deduce that $\nu = 4$.  Therefore $\QH^*(X) = \kk[H] / (H^4 - 4H)$.
\end{example}

\begin{example}
\label{ex4Blowup}
From \cite[Proposition 4.22]{PascaleffTonkonog} the monotone blowup $X = \operatorname{Bl}_4 \CC\PP^2$ of $\CC\PP^2$ at four points contains a monotone torus with potential
\[
W_L = (1+x+y)\big( 1+\tfrac{1}{x}\big) \big( 1+ \tfrac{1}{y} \big) - 3 \in \kk[x^{\pm1}, y^{\pm1}] \cong S.
\]
Over $\CC$ this has three non-degenerate critical points, namely $(x, y) = (-1, -1)$ and $(x, y) = (\xi, \xi)$ where $\xi^2 = \xi + 1$.  Sanda \cite{Sanda} combines these with idempotents arising from Lagrangian spheres to show that $\QH^*(X)$ is semisimple in this case.  Using our results, we see that over any field $\QH^*(X)$ contains a factor of
\[
\Jacisol W_L = \Jac W_L \cong \kk \times \kk[\xi] / (\xi^2-(\xi+1)).
\]
If $\Char \kk = 5$ then the second factor is local and does not decompose further, so $\QH^*(X)$ is not semisimple.
\end{example}


\subsection{Constraints on monotone tori from quantum cohomology}

Turning \cref{CorollaryB} around, knowledge of $\QH^*(X)$ places restrictions on the possible superpotentials of monotone tori in $X$.  The crudest---but already non-trivial---constraints come from the total dimension $D$ of $\QH^*(X)$, i.e.~the sum of the Betti numbers of $X$.  The simplest statement is the following.

\begin{corollary}
The number of isolated critical points of $W_L$ is at most $D$.\hfill$\qed$
\end{corollary}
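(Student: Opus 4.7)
The plan is to deduce this immediately from \cref{CorollaryB} by bounding $\dim_\kk \Jacisol W_L$ from below by the number of isolated critical points.

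First I would recall that each isolated critical point $\calL_i$ of $W_L$ corresponds to a maximal ideal $\m_i$ of $S$ with residue field $\kk$, and that the condition of being isolated (via \cref{lemIsolatedTFAE}) implies that the local ring $(\Jac W_L)_{\m_i}$ is a non-zero finite-dimensional local $\kk$-algebra. Since $S_\isol$ is the localisation of $S$ obtained by inverting all functions non-vanishing at every isolated critical point, standard commutative algebra gives a product decomposition
\[
\Jacisol W_L \;\cong\; \prod_{i=1}^{N} (\Jac W_L)_{\m_i},
\]
where $N$ is the number of isolated critical points. Each factor on the right is a non-zero $\kk$-vector space, so $\dim_\kk \Jacisol W_L \geq N$.

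On the other hand, \cref{CorollaryB} says that $\Jacisol W_L$ is a factor of $\QH^*(X)$ as a $\kk$-algebra, and in particular is a $\kk$-linear subspace of $\QH^*(X)$. Hence $\dim_\kk \Jacisol W_L \leq \dim_\kk \QH^*(X) = D$. Combining these two inequalities yields $N \leq D$, which is the claim.

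There is no real obstacle here; this is a one-line consequence of \cref{CorollaryB} once one knows that an isolated critical point contributes a non-zero local summand to $\Jacisol W_L$.
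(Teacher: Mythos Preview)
Your proposal is correct and is exactly the argument the paper has in mind: the corollary is marked with an immediate $\qed$ after \cref{CorollaryB}, and your write-up simply spells out the one-line deduction (each isolated critical point contributes a non-zero local factor to $\Jacisol W_L$, whose total dimension is bounded by $D$).
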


Previously this could only have been said for nondegenerate critical points.  A slightly more refined statement is the following.

\begin{corollary}
The sum of the Milnor numbers of the isolated singularities of $W_L$ is at most $D$.\hfill$\qed$
\end{corollary}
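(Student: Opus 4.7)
The plan is to derive this directly from \cref{CorollaryB} together with a standard commutative-algebra decomposition of $\Jacisol W_L$. Since \cref{CorollaryB} exhibits $\Jacisol W_L$ as a factor of the finite-dimensional $\kk$-algebra $\QH^*(X)$, we have $\dim_\kk \Jacisol W_L \leq D$, so the task reduces to identifying the left-hand side with the sum of the Milnor numbers.

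For this identification, I would argue that an isolated critical point $p$ of $W_L$ corresponds to a maximal ideal $\m_p \subset S$ whose image in $\Jac W_L$ is a zero-dimensional irreducible component of $\Spec \Jac W_L$; the fact that $p$ is isolated means $\{p\}$ is both closed and open in the critical locus, so the algebraic local ring $(\Jac W_L)_{\m_p}$ is already Artinian. Inverting the functions that are nowhere zero on the set of isolated critical points then splits off these Artinian local rings as a direct product, giving
\[
\Jacisol W_L \cong \prod_p (\Jac W_L)_{\m_p},
\]
with $p$ running over the isolated critical points. By definition, the $\kk$-dimension of $(\Jac W_L)_{\m_p}$ is the Milnor number $\mu_p$ of $W_L$ at $p$, and combining this with the inequality above yields
\[
\sum_p \mu_p \;=\; \dim_\kk \Jacisol W_L \;\leq\; D.
\]

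I do not expect any real obstacle here; the only minor subtlety is the choice of convention for the Milnor number when the residue field $S/\m_p$ is a proper extension of $\kk$. If one takes $\mu_p$ to be the $S/\m_p$-dimension rather than the $\kk$-dimension, an extra factor of the residue degree appears and the inequality only becomes sharper, so either convention is consistent with the stated bound. Alternatively, extending scalars to $\overline{\kk}$ side-steps the issue entirely, since the direct-product decomposition of $\QH^*(X)$ from \cref{TheoremA} is compatible with base change and the isolated critical points of $W_L$ over $\overline{\kk}$ are exactly the geometric points lying over the isolated critical points over $\kk$.
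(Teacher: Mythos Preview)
Your argument is correct and matches the paper's intended (but unwritten) derivation: the corollary is marked \textit{qed} immediately after the statement, so the paper treats it as an instant consequence of \cref{CorollaryB} via $\dim_\kk \Jacisol W_L \leq D$ together with the product decomposition $\Jacisol W_L \cong \prod_i \Jac_{\calL_i} W_L$ recorded in \cref{defSisol}. Your final paragraph is harmless but unnecessary here, since in this paper a critical point is by convention a $\kk$-valued point (see \cref{sscIsolated}), so the residue field is always $\kk$.
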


Knowledge of the ring structure on $\QH^*(X)$ of course allows one to go further.

\begin{corollary}
\label{corLocalFactors}
There is an injection from the set of isolated critical points of $W_L$ to the set of factors when $\QH^*(X)$ is decomposed as a product of local rings.  Moreover, the localisation of $\Jac W_L$ at each isolated critical point is isomorphic to the corresponding local factor of $\QH^*(X)$.\hfill$\qed$
\end{corollary}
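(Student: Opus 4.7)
The plan is to deduce this as a purely algebraic consequence of \cref{TheoremA}, via the uniqueness of the decomposition of an Artinian ring into a product of Artinian local rings. By \cref{TheoremA} we already have an orthogonal decomposition $\QH^*(X) \cong K \times \Jacisol W_L$ of unital $\kk$-algebras, so $\Jacisol W_L$ is finite-dimensional over $\kk$; in particular there are only finitely many isolated critical points $\calL_1, \dots, \calL_r$ of $W_L$, with corresponding maximal ideals $\m_1, \dots, \m_r \subset S$.

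Next I would decompose $\Jacisol W_L$ further. Because $S_\isol$ is obtained from $S$ by inverting every function that is non-zero at every $\calL_i$, the set-theoretic spectrum of $\Jacisol W_L$ is exactly $\{\calL_1, \dots, \calL_r\}$; combined with one of the equivalent formulations of isolatedness in \cref{lemIsolatedTFAE} this shows that $\Jacisol W_L$ is Artinian with these as its maximal ideals. The Chinese Remainder Theorem (or equivalently decomposing $\Spec \Jacisol W_L$ into its connected components) then gives
\[
\Jacisol W_L \;\cong\; \prod_{i=1}^{r} (\Jac W_L)_{\m_i},
\]
with each factor Artinian local, and hence
\[
\QH^*(X) \;\cong\; K \,\times\, \prod_{i=1}^{r} (\Jac W_L)_{\m_i}.
\]

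Since $K$ is itself Artinian (being a ring direct factor of the Artinian ring $\QH^*(X)$), it decomposes uniquely as $\prod_j K_j$ with each $K_j$ local; concatenating the two products yields \emph{the} (unique-up-to-permutation) decomposition of $\QH^*(X)$ into Artinian local factors. Sending each $\calL_i$ to the factor $(\Jac W_L)_{\m_i}$ produces the required injection, and the identification of the corresponding local factor is built into the construction. Given \cref{TheoremA}, there is essentially no obstacle here: the whole argument is commutative-algebra bookkeeping. The one thing worth verifying carefully is that the product decomposition of $\Jacisol W_L$ really has exactly one factor per isolated critical point, which is precisely what the equivalences in \cref{lemIsolatedTFAE} are designed to guarantee.
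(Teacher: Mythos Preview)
Your proposal is correct and follows essentially the same route the paper has in mind: the paper gives no separate proof (note the \verb|\hfill$\qed$|), treating the statement as immediate from \cref{TheoremA} (equivalently \cref{CorollaryB}) together with the already-recorded facts that $\Jacisol W_L = \Jac_{\calL_1} W_L \times \dots \times \Jac_{\calL_r} W_L$ (\cref{defSisol}) and that $\QH^*(X)$, being Artinian, decomposes uniquely into local factors (\cref{rmkArtinian}). Your argument is just the explicit unpacking of this, with no substantive differences.
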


\begin{example}
Let $L$ be any monotone Lagrangian torus in $X = \CC\PP^n$, with superpotential $W_L$.  If $\Char \kk$ does not divide $n+1$ then $\QH^*(X) \cong \kk^{\times (n+1)}$, so $W_L$ has at most $n+1$ isolated critical points, all of which are non-degenerate.  However, if $\Char \kk = p$ \emph{does} divide $n+1$, say $n+1 = p^ab$ with $b$ not divisible by $p$, then $\QH^*(X) \cong (\kk[x] / (x^{p^a}))^{\times b}$, so $W_L$ has at most $b$ isolated critical points, at each of which $\Jac W_L$ looks like $\kk[x] / (x^{p^a})$.
\end{example}

Sometimes one can argue that certain local factors of $\QH^*(X)$ cannot arise from isolated critical points of superpotentials of monotone tori, and therefore strengthen \cref{corLocalFactors}.  This idea is illustrated in \cref{exCubic,exQuadric}, where we prove the following.

\begin{proposition}
\label{corCubic}
Suppose $L$ is a monotone Lagrangian torus in the cubic surface.  If $\Char \kk \neq 3$ then $W_L$ has at most one isolated critical point and it must be non-degenerate, whilst if $\Char \kk = 3$ then $W_L$ cannot have any isolated critical points.
\end{proposition}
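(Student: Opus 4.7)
The plan is to combine \cref{corLocalFactors} with an explicit calculation of the quantum cohomology ring of the cubic surface $X$. By \cref{corLocalFactors}, every isolated critical point $\calL$ of $W_L$ gives a local factor of $\QH^*(X)$ isomorphic to $(\Jac W_L)_\calL$. Since $\dim L = n = 2$, this localised Jacobian is the complete intersection Artinian local $\kk$-algebra $\kk[[x-a,y-b]]/(\partial_x W_L, \partial_y W_L)$, which has embedding dimension at most $2$, and equals $\kk$ exactly when the critical point is non-degenerate. The proposition therefore reduces to identifying the local factors of $\QH^*(X)$ of embedding dimension at most $2$, and determining which of them are isomorphic to $\kk$.

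First I would compute $\QH^*(X)$ as an explicit $\kk$-algebra, using, for example, Beauville's calculation of the genus-zero Gromov--Witten invariants, or equivalently a quantum Leray--Hirsch argument from the description $X \cong \mathrm{Bl}_6\CC\PP^2$. Writing $\QH^*(X) = \prod_j \calQ_j$ as a product of local Artinian $\kk$-algebras (the generalised eigenspace decomposition of $c_1 \qprod$), I would then compute $\dim_\kk \m_j / \m_j^2$ for each factor $(\calQ_j, \m_j)$.

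The key structural statement to verify is: in $\Char \kk \neq 3$ exactly one factor is isomorphic to $\kk$, and every other factor has embedding dimension strictly greater than $2$; in $\Char \kk = 3$ every factor has embedding dimension strictly greater than $2$. Given this, \cref{corLocalFactors} concludes the argument: in $\Char \kk \neq 3$ the only local factor realisable as a two-variable local Jacobian is the $\kk$-factor, so $W_L$ has at most one isolated critical point and it must be non-degenerate; in $\Char \kk = 3$ no factor is so realisable, so $W_L$ has no isolated critical points at all.

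The main obstacle is the embedding-dimension computation in both characteristics. Once $\QH^*(X)$ is presented explicitly in a basis including the Picard lattice $\rH^2(X;\kk) \cong \kk^7$, computing $\dim_\kk \m_j / \m_j^2$ for each factor is a finite but delicate task; it hinges on how the degree $\langle H\cdot H,[X]\rangle = 3$ interacts with the quantum corrections from the $27$ lines, and this is what produces the characteristic-$3$ dichotomy.
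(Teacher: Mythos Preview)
Your approach is sound and genuinely different from the paper's.  The paper also begins by decomposing $\QH^*(X)$ into local factors (one copy of $\kk$ and an $8$-dimensional piece $\calQ_2$ when $\Char\kk \neq 3$; a single $9$-dimensional local ring when $\Char\kk = 3$), but it rules out the large factor by a \emph{categorical} argument: the Cayley cubic degeneration produces four pairwise-disjoint Lagrangian spheres, giving four mutually orthogonal non-zero objects in the Fukaya category, and \cref{TheoremD} says a toroidal summand cannot contain such a configuration.  This forces the spheres into $\Fuk(X)_{\calQ_2}$ (resp.~the whole category in characteristic $3$), so that summand is not toroidal and the corresponding factor cannot arise from an isolated critical point.

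Your route---bounding the embedding dimension of the local factors---avoids both the Lagrangian spheres and \cref{TheoremD}, reducing everything to commutative algebra once $\QH^*(X)$ is known.  The computation you flag as ``delicate'' is in fact quite clean: from the multiplication matrices one checks that every product of two of the generators $H+6,\,E_1+2,\dots,E_6+2$ of $\m_2$ is a scalar multiple of $c_1+6$ (e.g.~$(E_i+2)(E_j+2)=3(c_1+6)$ for $i\neq j$), so $\m_2^2=\kk\cdot(c_1+6)$ and the Hilbert function of $\calQ_2$ is $(1,6,1)$, giving embedding dimension $6$.  In characteristic $3$ the same computation gives Hilbert function $(1,7,1)$ for the full $\QH^*(X)$.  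Either way the embedding dimension exceeds $2$, so no two-variable local Jacobian can match, exactly as you anticipated.

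Each approach has its advantages: the paper's argument illustrates the force of \cref{TheoremD} and is robust to not knowing the quantum product precisely, while yours is self-contained and purely algebraic, and would transport to other Fano surfaces without needing a supply of disjoint Lagrangian spheres.
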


\begin{remark}
Pascaleff--Tonkonog \cite[Table 1]{PascaleffTonkonog} give a monotone torus $L$ in the cubic surface with
\[
W_L = \frac{(1+x+y)^3}{xy} - 6.
\]
This has a critical point at $(1, 1)$, which is non-degenerate outside characteristic $3$.  In characteristic $3$ it falls onto the $1$-dimensional critical locus $\{1+x+y = 0\}$, and thus becomes non-isolated.
\end{remark}

\begin{proposition}
\label{corQuadric}
Consider the quadric threefold $X$, and let $V$ be the Lagrangian sphere arising as the vanishing cycle of the toric degeneration (from \cref{exQuadricQH}) of $X$ to a cone over a quadric surface.  Suppose $K$ is a monotone Lagrangian torus in $X$ which is disjoint from $V$.  If $\Char \kk = 2$ then $W_K$ has no isolated critical points; if $\Char \kk = 3$ then it has at most one isolated critical point, with local Jacobian ring $\kk[y] / (y-1)^3$; and if $\Char \kk \neq 2, 3$ then it has at most three isolated critical points, all of which are non-degenerate.
\end{proposition}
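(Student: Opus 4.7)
The plan is to combine \cref{TheoremA} applied to $K$ with the orthogonality constraint imposed by $K$ being disjoint from $V$: the former identifies $\Jacisol W_K$ as a product of local factors of $\QH^*(X)$, while the latter prevents those factors from coinciding with the local factor of $\QH^*(X)$ in which $V$ sits.

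First I would recall the ring structure $\QH^*(X) \cong \kk[H]/(H^4 - 4H)$, valid over any field $\kk$ by \cref{exQuadricQH} in characteristic $\neq 2$ and by base change from the classical integral computation in general.  Decomposing into local factors: this is $\kk[H]/(H^4)$, itself local, when $\Char \kk = 2$; $\kk \times \kk[H]/(H-1)^3$ when $\Char \kk = 3$; and $\kk \times \kk[H]/(H^3 - 4)$ with the second factor étale when $\Char \kk \neq 2, 3$.

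Second, I would identify the local factor $\calQ_V$ containing $V$.  As a simply-connected Lagrangian sphere with its canonical spin structure, $V$ defines a nonzero object of $\Fuk(X)$ with $\HF^*(V, V) \cong \rH^*(S^3; \kk)$, a local graded ring.  The closed--open map $\CO^0_V \colon \QH^*(X) \to \HF^*(V, V)$ therefore factors through a unique local factor $\calQ_V$.  Using the $\ZZ/6$-grading on $X$, $H$ sits in degree $2$ while $\HF^2(V, V) = 0$, so $\CO^0_V(H) = 0$; hence the image of $H$ in $\calQ_V$ lies in its maximal ideal.  Inspecting each characteristic, this identifies $\calQ_V$ unambiguously as the $\kk$-factor on which $H$ acts by zero when $\Char \kk \neq 2$, and as the whole of $\QH^*(X)$ when $\Char \kk = 2$.

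Third, for any isolated critical point $\calL$ of $W_K$, \cref{TheoremA} identifies the local Jacobian $Q_\calL = \Jac(W_K)_\calL$ with a local factor of $\QH^*(X)$, and \cref{TheoremC,TheoremD} show that $\Fuk(X)_{\lambda_\calL, Q_\calL}$ is split-generated by $(K, \calL)$.  Disjointness gives $\HF^*((K, \calL), V) = 0$, so $V$ cannot lie in this summand and hence $Q_\calL \neq \calQ_V$; equivalently $Q_\calL$ is a local factor of the orthogonal complement of $\calQ_V$.  The three cases then read off immediately: if $\Char \kk = 2$ the complement is $0$, so $W_K$ has no isolated critical points; if $\Char \kk = 3$ the complement $\kk[H]/(H-1)^3$ is itself local, giving at most one isolated critical point with local Jacobian $\kk[y]/(y-1)^3$; and if $\Char \kk \neq 2, 3$ the complement is étale of total $\kk$-dimension $3$, so each $Q_\calL$ is a field and there are at most three critical points, all non-degenerate.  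I expect the main obstacle to be the second step, namely pinning down $\calQ_V$ from the single cohomological input $\CO^0_V(H) = 0$; once that identification is secured, the remainder is direct bookkeeping against the known decomposition of $\QH^*(X)$.
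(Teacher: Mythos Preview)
Your argument is correct and reaches the same conclusion as the paper, but the key step---identifying the local factor $\calQ_V$ of $\QH^*(X)$ in which $V$ sits---is handled differently.  The paper quotes the result of Ivan Smith and Evans--Lekili that $V$ split-generates the summand of the Fukaya category complementary to the one generated by the torus $L$ of \cref{exQuadricQH}; this pins down $\calQ_V$ as the factor not hit by $\Jac W_L$.  You instead compute $\CO^0_V(H)=0$ directly from the $\ZZ/6$-grading (since $H$ has degree $2$ while $\HF^*(V,V)\cong \rH^*(S^3)$ is supported in degrees $0$ and $3$), which forces $\calQ_V$ to be the factor on which $H$ is nilpotent.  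Your route is more self-contained, avoiding the external split-generation input; the paper's route has the advantage of staying within the $\ZZ/2$-graded framework it has already set up, whereas your grading argument implicitly requires lifting $\CO^0_V$ to a $\ZZ/6$-graded map (standard, but worth a sentence of justification).  After this step the two proofs coincide: both use that $(K,\calL)$ split-generates $\Fuk(X)_{\lambda_\calL, Q_\calL}$ and that $\HF^*((K,\calL),V)=0$ by disjointness to conclude $Q_\calL \neq \calQ_V$.

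One minor slip: in characteristic $2$ the presentation $\QH^*(X)\cong \kk[H]/(H^4-4H)$ fails, since over $\ZZ$ one has $\QH^*(X;\ZZ)=\ZZ[H,E]/(H^2-2E,\,E^2-H)$ and $E$ cannot be eliminated when $2$ is not invertible; the correct answer in characteristic $2$ is $\kk[E]/(E^4)$ with $H=E^2$.  This does not affect your argument, as you only use that the ring is local in this case.
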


\begin{remark}
The torus $L$ considered in \cref{exQuadricQH} is disjoint from $V$ and attains these bounds.
\end{remark}

\begin{remark}
Our constraints on $W_L$ are essentially local in flavour, although cumulative over the different isolated critical points.  There are also strong global constraints on $W_L$ coming from the work of Tonkonog \cite{TonkonogDescendants}, in which it is shown that the \emph{periods} of $W_L$, i.e.~the constant terms in powers of $W_L$, are independent of $L$.  It would be interesting to explore what can be done by combining these two flavours of constraint.
\end{remark}


\subsection{Isolated critical points}
\label{sscIsolated}

We end this introduction with a brief summary of the notion of isolatedness of critical points, which plays a key role in our results.

Each critical point $\calL$ of $W_L$ corresponds to a $\kk$-valued point of $\Spec \Jac W_L$, or equivalently a maximal ideal $\m \subset \Jac W_L$ with residue field $\kk$.  Let $\Jac_\calL W_L$ denote the localisation of $\Jac W_L$ at $\calL$, i.e.~at the ideal $\m$.

\begin{lemma}
\label{lemIsolatedTFAE}
For a critical point $\calL$ of $W_L$, the following are equivalent:
\begin{enumerate}
\item\label{itmFinDim} $\Jac_\calL W_L$ is finite-dimensional over $\kk$.
\item\label{itmArtin} $\Jac_\calL W_L$ is Artinian.
\item\label{itmKrullZero} $\Jac_\calL W_L$ has Krull dimension zero.
\item\label{itmIrreducible} $\calL$ constitutes an irreducible component of $\Spec \Jac W_L$.
\item\label{itmIsolated} $\calL$ is an isolated point of $\Spec \Jac W_L$, in the sense that $\{\calL\}$ is open in $\Spec \Jac W_L$.
\end{enumerate}
\end{lemma}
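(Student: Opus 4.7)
The plan is to observe that $\Jac W_L$ is a quotient of the Noetherian ring $S \cong \kk[z_1^{\pm 1}, \dots, z_n^{\pm 1}]$, and hence is itself Noetherian, so its localisation $\Jac_\calL W_L$ at the maximal ideal $\m$ corresponding to $\calL$ is a Noetherian local ring whose residue field is $\kk$ (the latter because $\calL$ is by assumption a $\kk$-valued point). All five conditions will then follow from standard commutative algebra applied to this local ring, and the main task is to organise the implications cleanly.

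For (i) $\Leftrightarrow$ (ii), any finite-dimensional $\kk$-algebra is automatically Artinian; conversely, Artinianness of the Noetherian local ring $\Jac_\calL W_L$ forces its maximal ideal to be nilpotent, so $\Jac_\calL W_L$ admits a finite filtration whose successive quotients $\m^i/\m^{i+1}$ are finitely generated $\kk$-vector spaces. The equivalence (ii) $\Leftrightarrow$ (iii) is the standard characterisation of Artinian rings among Noetherian ones. For (iii) $\Leftrightarrow$ (iv), the local ring $\Jac_\calL W_L$ has Krull dimension zero precisely when no prime of $\Jac W_L$ is strictly contained in $\m$, which is to say that $\m$ is a minimal prime, equivalently that $\{\calL\} = V(\m)$ is an irreducible component of $\Spec \Jac W_L$.

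Finally, for (iv) $\Leftrightarrow$ (v), I would invoke Noetherianity of $\Spec \Jac W_L$ to get finitely many irreducible components: if $\{\calL\}$ is such a component, then minimality of $\m$ prevents $\calL$ from lying in any other component, so the union of the other (finitely many) components is a closed set whose complement is an open neighbourhood of $\calL$ equal to $\{\calL\}$. Conversely, if $\{\calL\}$ is open, then being a nonempty closed irreducible subset it must be a maximal such, i.e.\ an irreducible component, because any strictly larger irreducible closed set $Z \supsetneq \{\calL\}$ would contain the two disjoint nonempty-in-$Z$ opens $\{\calL\}$ and $Z \setminus \{\calL\}$, contradicting the irreducibility of $Z$.

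None of the steps is truly difficult; the only place requiring mild care is this last equivalence, where finiteness of irreducible components (a consequence of Noetherianity) is essential to conclude that $\{\calL\}$ is open whenever it is a component.
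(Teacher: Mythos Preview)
Your proposal is correct and follows essentially the same route as the paper: both cycle through the equivalences via standard commutative algebra, using Noetherianity to link Artinian and Krull-dimension-zero, and identifying Krull dimension zero with minimality of $\m$. Your treatment is slightly more self-contained in two places---you give a direct filtration argument for (ii) $\Rightarrow$ (i) rather than citing a reference, and you make explicit the use of Noetherianity (finitely many irreducible components, and the fact that a minimal prime is contained in no other minimal prime) in (iv) $\Rightarrow$ (v)---but these are minor stylistic differences, not genuinely different arguments.
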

\begin{proof}
\ref{itmFinDim} $\iff$ \ref{itmArtin}: By \cite[Exercise 8.3]{AtiyahMacdonald} a finitely generated $\kk$-algebra is Artinian if and only if it's finite-dimensional over $\kk$.

\ref{itmArtin} $\iff$ \ref{itmKrullZero}: By \cite[Theorem 8.5]{AtiyahMacdonald} a ring is Artinian if and only if it's Noetherian and of Krull dimension zero.

\ref{itmKrullZero} $\iff$ \ref{itmIrreducible}: $\Jac_\calL W_L$ has Krull dimension zero if and only if there are no primes in $\Jac_\calL W_L$ except for the localisation of $\m$ itself.  This is equivalent to $\m$ being a minimal prime in $\Jac W_L$, and hence to $\calL$ constituting an irreducible component of $\Spec \Jac W_L$.

\ref{itmIrreducible} $\iff$ \ref{itmIsolated}: If $\calL$ constitutes an irreducible component then the union of the other components is a closed complement to $\{\calL\}$.  Conversely, if $\{\calL\}$ is open then $\{\calL\} \cup (\Spec \Jac W_L \setminus \{\calL\})$ is a decomposition of $\Spec \Jac W_L$ into disjoint closed subsets which can be refined into a decomposition into irreducible components.
\end{proof}

\begin{definition}
The critical point $\calL$ is \emph{isolated} if any of these equivalent conditions holds.
\end{definition}

\begin{corollary}
\label{corNonIsolated}
If $\calL$ is a non-isolated critical point, and $C$ is any component of $\Spec \Jac W_L$ containing $\calL$, then the localisation $\Jac_\calL W_L$ and the coordinate ring $\kk[C]$ are both infinite-dimensional over $\kk$.
\end{corollary}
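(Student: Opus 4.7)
The plan is to deduce both infinite-dimensionality statements from the equivalences in \cref{lemIsolatedTFAE} by a short contrapositive/contradiction argument, so there is essentially no content beyond unpacking that lemma.

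The first assertion is immediate: the equivalence \ref{itmFinDim} $\Leftrightarrow$ \ref{itmIsolated} in \cref{lemIsolatedTFAE} states that $\Jac_\calL W_L$ is finite-dimensional over $\kk$ exactly when $\calL$ is an isolated point of $\Spec \Jac W_L$, and the contrapositive handles the non-isolated case.

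For the second assertion I would proceed by contradiction. Suppose $\kk[C]$ is finite-dimensional over $\kk$. The component $C$ corresponds to a minimal prime $\mathfrak{p}$ of $\Jac W_L$, so $\kk[C] = \Jac W_L / \mathfrak{p}$ is a finitely generated $\kk$-algebra that is moreover an integral domain. A finite-dimensional integral $\kk$-algebra is automatically a field, since multiplication by any nonzero element is an injective, and hence bijective, $\kk$-linear endomorphism. Thus $C = \Spec \kk[C]$ consists of a single closed point, and the residue field map $\kk[C] \to \kk$ corresponding to the $\kk$-point $\calL \in C$ is both surjective (onto $\kk$) and injective (being a map of fields), hence an isomorphism. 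Therefore $C = \{\calL\}$ scheme-theoretically, meaning that $\calL$ by itself constitutes an irreducible component of $\Spec \Jac W_L$. By \ref{itmIrreducible} of \cref{lemIsolatedTFAE} this forces $\calL$ to be isolated, contradicting the hypothesis.

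There is no serious obstacle, since the whole argument is just an unpacking of \cref{lemIsolatedTFAE} combined with the standard fact that a finite-dimensional integral $\kk$-algebra is a field. The only subtlety worth flagging is the step passing from ``$\kk[C]$ is a field'' to ``$C = \{\calL\}$'': it is crucial here that $\calL$ is a $\kk$-point in the sense that its residue field equals $\kk$, which forces the field extension $\kk \to \kk[C]$ to be trivial rather than merely finite.
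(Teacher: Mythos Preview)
Your proof is correct and follows essentially the same contradiction strategy as the paper: both arguments show that if $\kk[C]$ were finite-dimensional then the minimal prime $\mathfrak{p}$ defining $C$ would be maximal, hence equal to $\m$, forcing $\calL$ to be isolated via \cref{lemIsolatedTFAE}. The only difference is in how maximality of $\mathfrak{p}$ is established: the paper argues that $\kk[C]$ is Artinian and hence of Krull dimension zero (invoking \cite[Theorem 8.5]{AtiyahMacdonald}), whereas you use the more elementary observation that a finite-dimensional integral $\kk$-algebra is a field; your subsequent residue-field step is a slight detour, since once $\kk[C]$ is a field the containment $\mathfrak{p} \subset \m$ already gives $\mathfrak{p} = \m$ without needing $\kk[C] \cong \kk$.
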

\begin{proof}
The statement about $\Jac_\calL W_L$ is an immediate consequence of condition \ref{itmFinDim} above.  Suppose now, for contradiction, that $\kk[C]$ is finite-dimensional over $\kk$.  Then it's Artinian, so by \cite[Theorem 8.5]{AtiyahMacdonald} (as above) it has Krull dimension zero.  This means that the minimal prime $\mathfrak{p} \subset \Jac W_L$ defining $C$ is actually maximal, and hence equal to the ideal $\m$ defining $\calL$.  Therefore $\m$ is itself minimal, so the localisation $\Jac_\calL W_L$ of $\Jac W_L$ at $\m$ has Krull dimension zero.  This contradicts condition \ref{itmKrullZero} above, completing the proof.
\end{proof}

Note that if the isolated critical points of $W_L$ are $\calL_1, \dots, \calL_r$ (there are only finitely many, by condition \ref{itmIrreducible}) then $\Spec \Jac W_L$ decomposes into disjoint open sets
\[
\{\calL_1\} \cup \dots \cup \{\calL_r\} \cup (\Spec \Jac W_L \setminus \{\calL_1, \dots, \calL_r\}).
\]
We obtain a corresponding decomposition of $\Jac W_L$ into a direct product of rings
\[
\Jac W_L = \Jac_{\calL_1} W_L \times \dots \times \Jac_{\calL_r} W_L \times \left(\Jac W_L \big/ (\Jac_{\calL_1} W_L \times \dots \times \Jac_{\calL_r} W_L)\right).
\]

\begin{definition}
\label{defSisol}
The \emph{isolated part} of the Jacobian ring, denoted $\Jacisol W_L$, is the localisation of $\Jac W_L$ at $\{\calL_1, \dots, \calL_r\}$.  Equivalently, it is $\Jac_{\calL_1} W_L \times \dots \times \Jac_{\calL_r} W_L$.
\end{definition}

Before moving on, recall the following facts from commutative algebra, which we shall use later.

\begin{lemma}
\label{lemCommAlg}
\begin{enumerate}
\item\label{itmLocalisation} Over any ring, localisation of modules is exact \cite[Proposition 10.9.12]{stacks-project}, and is equivalent to tensoring with the localisation of the ring \cite[Lemma 10.12.15]{stacks-project}.
\item\label{itmCompletion} Over a Noetherian ring, $I$-adic completion (for any ideal $I$) of finitely-generated modules is exact \cite[Lemma 10.97.1(1)]{stacks-project} and is equivalent to tensoring with the $I$-adic completion of the ring \cite[Lemma 10.97.1(3)]{stacks-project}.\hfill$\qed$
\end{enumerate}
\end{lemma}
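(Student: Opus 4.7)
The plan is to handle localisation and completion separately, as they rely on genuinely different ingredients.

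For (i), I would work directly with the standard construction of $S^{-1}M$ as equivalence classes $m/s$ with $m \in M$, $s \in S$. Exactness at the three positions of a short exact sequence $0 \to M' \to M \to M'' \to 0$ is a direct diagram chase: surjectivity on the right is immediate; injectivity on the left uses that if $m'/s = 0$ in $S^{-1}M$ then $tm' = 0$ in $M$ for some $t \in S$, hence in $M'$ by injectivity of $M' \to M$; and exactness in the middle follows by clearing denominators and invoking exactness of the original sequence. For the second claim, construct the natural $R$-bilinear pairing $(r/s, m) \mapsto rm/s$ and check it descends to a map $S^{-1}R \otimes_R M \to S^{-1}M$; surjectivity is clear, and injectivity follows from the observation that every element of $S^{-1}R \otimes_R M$ can be written in the form $(1/s) \otimes m$ for suitable $s \in S$ and $m \in M$, so that vanishing in $S^{-1}M$ directly forces vanishing in the tensor product.

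For (ii), the key input is the Artin--Rees lemma. Completion is defined by $\hat{M} = \varprojlim_n M/I^n M$, and inverse limits are always left exact. Given a short exact sequence $0 \to M' \to M \to M'' \to 0$ of finitely generated modules over a Noetherian ring $R$, Artin--Rees provides $c \geq 0$ such that $M' \cap I^n M \subseteq I^{n-c} M'$ for $n \geq c$. This has two consequences: first, the inverse systems $\{M'/(M' \cap I^n M)\}_n$ and $\{M'/I^n M'\}_n$ are mutually cofinal and hence have the same limit, so the kernel of $\hat{M} \to \hat{M''}$ is correctly identified with $\hat{M'}$; second, the system $\{M' \cap I^n M\}_n$ satisfies the Mittag--Leffler condition, making $\varprojlim$ exact on the defining short exact sequences and hence yielding right exactness of completion. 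For the tensor identification, build the natural map $\hat{R} \otimes_R M \to \hat{M}$, check it is an isomorphism for $M = R$ trivially and therefore for $M = R^n$ by additivity, then use a finite presentation $R^m \to R^n \to M \to 0$ (available by the Noetherian and finitely generated hypotheses) together with the exactness of completion just established, and conclude by the five-lemma.

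The main obstacle will be the Artin--Rees step in (ii): verifying that it simultaneously identifies the image of $\hat{M'}$ inside $\hat{M}$ and supplies the Mittag--Leffler condition in the precise form needed is the substantive commutative-algebra content. Everything else---the manipulations with equivalence classes in (i), the five-lemma reduction via finite presentations, and the identification of $\hat{R} \otimes_R R^n$ with $\hat{R}^n$---is standard bookkeeping once the exactness statements are in place.
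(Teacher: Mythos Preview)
Your proposal is mathematically correct and follows the standard textbook arguments (essentially those in Atiyah--Macdonald), but the paper does not actually prove this lemma at all: the statement ends with a \qed and simply cites the Stacks Project for each clause. In other words, the paper treats these as well-known facts to be quoted, whereas you have reconstructed the underlying proofs. Your argument for (i) via direct manipulation of fractions, and for (ii) via Artin--Rees plus Mittag--Leffler for exactness followed by a finite-presentation-and-five-lemma reduction for the tensor identification, is exactly what lies behind the cited references; it is correct and complete as an outline, but strictly more than the paper itself supplies.
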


In particular, we can view $\Jac_\calL W_L$ (defined to be the localisation of $\Jac W_L$ at $\calL$) as the Jacobian ring of $W_L$ over the localisation $S_\calL$ of $S$ at $\calL$, or as $(\Jac W_L) \otimes_S S_\calL$.  Similarly, letting $\widehat{\phantom{-}}$ denote $\m$-adic completion, we have natural isomorphisms
\[
\widehat{\Jac W_L} \cong \text{Jacobian ring of $W_L$ over $\hatS$} \cong (\Jac W_L) \otimes_S \hatS.
\]


\subsection{Structure of the paper}

In \cref{secGeneration} we start by recalling various categorical preliminaries, in particular split-generation and the Shklyarov (or Mukai) pairing.  We then recap relevant features and properties of the Fukaya category, and finally combine these to prove a general automatic split-generation result, \cref{thmGen}, in the spirit of Sanda and Ganatra.  \Cref{secProofMain} applies these ideas to the specific case of a monotone Lagrangian torus, first considering a single isolated critical point, then showing that different isolated critical points do not interact, and finally relating the abstract categorical isomorphisms we obtain to the concrete geometrically-defined map $\COisol$.  We end by giving the details of some of the examples considered above, in \cref{secExamples}.


\subsection{Acknowledgements}

I am grateful to Ed Segal for valuable input to \cref{sscToroidal}, and in particular for explaining the proof of \cref{propSegal}.  I would also like to thank Nick Sheridan for helpful correspondence, and Ivan Smith for influential questions and comments and for drawing my attention to the example of the cubic surface.


\section{Generation from smoothness}
\label{secGeneration}


\subsection{Split-generation}
\label{sscGeneration}

First we recap various categorical preliminaries, closely following \cite{GanatraAutomaticGeneration}.

Recall that an $A_\infty$-category $\calC$ has a \emph{pretriangulated envelope}, denoted $\Tw \calC$, and an \emph{idempotent completion} or \emph{split-closure}, denoted $\oPi \calC$.  Each of these is well-defined up to quasi-equivalence and carries a cohomologically full and faithful embedding of $\calC$.  An explicit model for $\Tw \calC$ is the category of \emph{twisted complexes} (hence the name), given by embedding $\calC$ in $\calC\text{-mod}$ using Yoneda, and then taking the full subcategory obtained from the image by iteratively taking mapping cones.

We will mostly be interested in the split-closed pretriangulated envelope, $\PiTw \calC$, sometimes also denoted $\mathrm{perf}(\calC)$ (e.g.~in \cite{GanatraAutomaticGeneration}).  For example, $\Fuk(X)_\lambda$ implicitly means the split-closed pretriangulated envelope of the Fukaya category whose objects are actual Lagrangians.  The embedding of $\PiTw \calC$ into $\PiTw(\PiTw\calC)$ is a quasi-equivalence, and we say that a categorical property is \emph{Morita-invariant} if it is preserved by quasi-equivalences and by applying $\PiTw$.  Similarly, a construction is Morita-invariant if quasi-equivalences and embeddings $\calC \to \PiTw \calC$ induce (quasi-)isomorphisms on it.  For example, Hochschild homology is Morita-invariant since a functor $F : \calC \to \calD$ induces a homomorphism $\HH_*(\calC) \to \HH_*(\calD)$ which is an isomorphism if $F$ is a quasi-equivalence or an embedding of the form $\calC \to \PiTw \calC$.  Similarly, Hochschild cohomology is Morita-invariant, in the sense that a functor $F : \calC \to \calD$ induces a zigzag
\begin{equation}
\label{eqHHmaps}
\HH^*(\calC) \to \HH^*(\calC, \calD) \leftarrow \HH^*(\calD)
\end{equation}
and both of these arrows are isomorphisms if $F$ is a quasi-equivalence or an embedding $\calC \to \PiTw \calC$.

\begin{remark}
We refer the reader to \cite[Section 2]{RitterSmith} and \cite[Appendix A]{SheridanFano} for the fundamentals of Hochschild homology and cohomology in the setting of Fukaya categories.  In \eqref{eqHHmaps} the group $\HH^*(\calC, \calD)$ denotes Hochschild cohomology of $\calC$ with coefficients in $\calD$, viewed as a $\calC$-bimodule via $F$, and the groups $\HH^*(\calC)$ and $\HH^*(\calD)$ are really shorthand for $\HH^*(\calC, \calC)$ and $\HH^*(\calD, \calD)$.  The first arrow is pushforward on coefficients, whilst the second arrow is pullback on Hochschild cochains.
\end{remark}

If $\calD$ is a full subcategory of $\calC$ then there is a cohomologically full and faithful embedding of $\PiTw \calD$ into $\PiTw \calC$.  We say that $\calD$ \emph{split-generates} $\calC$ if this embedding is a quasi-equivalence.


\subsection{The Shklyarov pairing}
\label{sscShklyarov}

Recall that a dg- or $A_\infty$-category over $\kk$ is \emph{proper} if the total cohomology of each morphism space is finite-dimensional.  Any such category has a \emph{Shklyarov pairing} on its Hochschild homology, denoted $\Shk{{-}}{{-}}$.  This was introduced by Shklyarov \cite{ShklyarovHirzebruch} for dg-algebras (i.e.~dg-categories with a single object) and extended to $A_\infty$-categories by Sheridan \cite[Section 5]{SheridanHodge}, who calls it the \emph{Mukai} pairing.  Sheridan proves that this pairing is Morita-invariant \cite[Proposition 5.20]{SheridanHodge}, in the sense that it is preserved by the isomorphism on Hochschild homology induced by a quasi-equivalence or an embedding $\calC \to \PiTw \calC$.  It follows immediately from his explicit formula for the pairing \cite[(35)]{SheridanHodge} that the pairing is also preserved under inclusions of full subcategories, and that if $\calA_1, \calA_2 \subset \calC$ are full subcategories which are categorically orthogonal (morphism complexes between $\calA_1$ and $\calA_2$ are acyclic) then the images of $\HH_*(\calA_1)$ and $\HH_*(\calA_2)$ in $\HH_*(\calC)$ are orthogonal with respect to it.

Recall next that a dg- or $A_\infty$-category over $\kk$ is \emph{(homologically) smooth} if its diagonal bimodule is split-generated by Yoneda bimodules.  Shklyarov shows that if a dg-algebra is smooth and proper then its Shklyarov pairing is non-degenerate \cite[Theorem 5.3]{ShklyarovHirzebruch}, and that its Hochschild homology is finite-dimensional \cite[Theorem 4.6]{ShklyarovSerre}, so the pairing is in fact perfect.  Since smoothness is Morita-invariant \cite[Proposition 20]{GanatraAutomaticGeneration}, and every $A_\infty$-algebra is quasi-isomorphic to a dg-algebra, it follows from Sheridan's results that the same properties hold for smooth and proper $A_\infty$-algebras.

\begin{proposition}[{\cite[Proposition 5.24]{SheridanHodge}}]
\label{propSmoothProper}
If an $A_\infty$-algebra $\calA$ is smooth and proper then $\HH_*(\calA)$ is finite-dimensional and its Shklyarov pairing is perfect.\hfill$\qed$
\end{proposition}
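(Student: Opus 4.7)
The plan is to reduce the statement for $A_\infty$-algebras to Shklyarov's theorems for dg-algebras, using the Morita invariance properties already collected in the excerpt.

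First, I would choose a dg-algebra $\calB$ quasi-isomorphic to $\calA$. A standard way to produce such a $\calB$ is to take the endomorphism dg-algebra of the Yoneda module $\calA \to \calA\text{-mod}$ (replacing it by a cofibrant model if needed). By construction $\calB$ and $\calA$ are quasi-equivalent as $A_\infty$-categories (with one object).

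Next, I would transport both hypotheses from $\calA$ to $\calB$. Properness for $\calA$ means $\rH^*(\calA)$ is finite-dimensional over $\kk$, and this is manifestly a quasi-isomorphism invariant, so $\calB$ is proper. Smoothness is Morita-invariant by \cite[Proposition 20]{GanatraAutomaticGeneration}, so $\calB$ is also smooth. At this point $\calB$ is a smooth and proper dg-algebra, and Shklyarov's theorems \cite[Theorem 4.6]{ShklyarovSerre} and \cite[Theorem 5.3]{ShklyarovHirzebruch} give that $\HH_*(\calB)$ is finite-dimensional over $\kk$ and that its Shklyarov pairing is non-degenerate; combining these two statements promotes non-degeneracy to perfectness.

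Finally, I would transfer the conclusion from $\calB$ back to $\calA$. The quasi-equivalence $\calA \simeq \calB$ induces an isomorphism $\HH_*(\calA) \cong \HH_*(\calB)$ (Hochschild homology is Morita-invariant, as recalled in \cref{sscGeneration}), so $\HH_*(\calA)$ is finite-dimensional. Sheridan's Morita invariance of the Shklyarov pairing \cite[Proposition 5.20]{SheridanHodge} says that this isomorphism intertwines the two pairings, hence the pairing on $\HH_*(\calA)$ is also perfect.

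The only real subtlety is ensuring that the replacement of $\calA$ by a dg-algebra genuinely produces a quasi-equivalence of $A_\infty$-categories for which Sheridan's Morita-invariance statement applies, rather than merely an $\rH^*$-level isomorphism of algebras. Once one commits to the Yoneda-type model this is automatic, so the argument amounts to little more than bookkeeping around the cited inputs.
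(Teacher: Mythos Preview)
Your argument is correct and is essentially the same as the paper's own justification, given in the paragraph immediately preceding the proposition: reduce to Shklyarov's dg-algebra theorems via a quasi-isomorphic dg-model, using Morita invariance of smoothness and of the Shklyarov pairing to transport the hypotheses and conclusion. The only difference is that you spell out the Yoneda construction of the dg-model explicitly, which the paper leaves implicit.
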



\subsection{The closed--open and open--closed string maps}
\label{sscStringMaps}

Now we focus on the case of interest, where $\calC$ is the Fukaya category $\Fuk(X)_\lambda$.  Recall from \cite[Sections 2.5--2.6]{SheridanFano} that there is \emph{closed--open string map}
\[
\CO_\lambda : \QH^*(X) \to \HH^*(\Fuk(X)_\lambda).
\]
which is a $\ZZ/2$-graded unital $\kk$-algebra homomorphism.  There is likewise an \emph{open--closed string map}
\[
\OC_\lambda : \HH_*(\Fuk(X)_\lambda) \to \QH^{*+n}(X),
\]
which is a $\ZZ/2$-graded $\QH^*(X)$-module map, where $\alpha \in \QH^*(X)$ is defined to act on $\HH_*(X)$ via the cap product action of $\CO_\lambda(\alpha)$.  Recalling that $c_1 \qprod$ denotes the quantum multiplication action of $c_1(X)$ on $\QH^*(X)$, we have the following.

\begin{proposition}[{\cite[Theorem 9.5(1)]{RitterSmith}, or \cite[Corollary 2.11]{SheridanFano} over $\CC$}]
\label{propEspace}
Assume that $\Char \kk \neq 2$ or that we only allow orientable Lagrangians in the Fukaya category.  Then the image of $\OC_\lambda$ is contained in the generalised $\lambda$-eigenspace $\QH^*(X)_\lambda$ of $c_1 \qprod$.\hfill$\qed$
\end{proposition}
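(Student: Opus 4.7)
The natural strategy is to exploit that $\OC_\lambda$ is a module map over $\QH^*(X)$: for any $\alpha \in \QH^*(X)$ and $\xi \in \HH_*(\Fuk(X)_\lambda)$ we have the standard identity
\[
\alpha \qprod \OC_\lambda(\xi) = \OC_\lambda\bigl(\CO_\lambda(\alpha) \cap \xi\bigr),
\]
where $\QH^*(X)$ acts on $\HH_*$ via cap product with $\CO_\lambda(\alpha)$. Taking $\alpha = c_1(X) - \lambda$ and iterating, the claim that $\OC_\lambda(\xi)$ lies in the generalised $\lambda$-eigenspace of $c_1\qprod$ reduces to showing that the cap action of $\CO_\lambda(c_1) - \lambda\cdot \mathbf{1}$ on $\HH_*(\Fuk(X)_\lambda)$ is locally nilpotent.

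The first step is therefore to pin down $\CO_\lambda(c_1)$. I would invoke the Auroux--Kontsevich--Seidel formula: for any object $K$ of $\Fuk(X)_\lambda$, the length-zero restriction $\CO^0_K(c_1(X))$ is computed by inserting a divisor Poincar\'e dual to $c_1(X)$ into the discs counted by $W_K$, which yields $\lambda \cdot e_K$ by the very definition of the obstruction value $\lambda$. This is the unique point where the sign hypothesis (either $\Char \kk \neq 2$ or orientability) enters: it is needed to trivialise the $\mathbb{Z}/2$-ambiguity in identifying $c_1(X)$ with half the Maslov class on disc boundaries, so that the constant appearing is $+\lambda$ rather than $\pm \lambda$ varying disc-by-disc.

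The second step is to upgrade this pointwise statement to nilpotence. By the above, the class $\CO_\lambda(c_1) - \lambda \cdot \mathbf{1} \in \HH^*(\Fuk(X)_\lambda)$ has vanishing length-zero component on every object, so it lies in the positive part of the length filtration on the Hochschild cochain complex. For any $\xi \in \HH_*(\Fuk(X)_\lambda)$, the element $\xi$ is represented by a finite sum of cyclic chains of bounded length, and repeated cap product with a class of strictly positive length eventually produces chains whose length exceeds the bound of any representing cycle and so must vanish on cohomology. Properness of $\Fuk(X)_\lambda$, in the form of finite-dimensional morphism spaces, is what lets this length-filtration argument terminate in a uniform way on each $\xi$.

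Combining these steps, for each $\xi$ there is some $N$ with $(\CO_\lambda(c_1) - \lambda)^N \cap \xi = 0$, so $(c_1 - \lambda)^N \qprod \OC_\lambda(\xi) = 0$, which places $\OC_\lambda(\xi) \in \QH^*(X)_\lambda$. The main obstacle is step two: rigorously aligning the length filtration on Hochschild cochains with the cap action on Hochschild chains so that a positive-filtration class acts nilpotently. Everything else is formal manipulation of the module property together with the Auroux--Kontsevich--Seidel identification of $\CO^0(c_1)$ with the superpotential value.
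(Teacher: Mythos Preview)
The paper does not supply its own proof of this proposition---it is quoted from \cite{RitterSmith,SheridanFano} with a $\qed$---but the remark immediately following it sketches the companion statement for $\CO_\lambda$, and combining that with the module identity you begin with gives the $\OC_\lambda$ result at once. So the natural comparison is between your argument and that remark.

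Both approaches share your first step: the Auroux--Kontsevich--Seidel identification $\CO^0_K(c_1)=\lambda\cdot e_K$ for every object $K$, together with the module property of $\OC_\lambda$. The divergence is in your second step. You attempt to prove directly that a Hochschild cocycle with vanishing length-zero part acts locally nilpotently on Hochschild chains via cap product. This can be pushed through (after passing to a dg model the cap product of a length-$\geq 1$ cochain strictly lowers chain length, so repeated capping annihilates any fixed cycle), but the argument is fiddly in the $A_\infty$ setting and, as you note, properness is not really the relevant input---finiteness of chain length is automatic. The paper's route is cleaner and avoids the cap-product filtration analysis entirely: factor the characteristic polynomial of $c_1\qprod$ as $(T-\lambda)^m p(T)$ with $p(\lambda)\neq 0$; Cayley--Hamilton gives $\CO_\lambda((c_1-\lambda)^m)\cup\CO_\lambda(p(c_1))=0$; and since the length-zero part of $\CO_\lambda(p(c_1))$ is the invertible scalar $p(\lambda)$, the class $\CO_\lambda(p(c_1))$ is invertible in $\HH^*$ (a standard length-filtration argument purely on cochains), forcing $\CO_\lambda((c_1-\lambda)^m)=0$. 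This yields an explicit exponent $m$ and reduces the filtration work to the easier cochain side. Either way the module identity then finishes the job.

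In short: your strategy is sound, and your acknowledged obstacle in step two is real but surmountable; the trick you are missing is to trade the nilpotence-on-chains argument for the Cayley--Hamilton-plus-invertibility argument on cochains, which is what the paper (and the references it cites) actually do.
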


\begin{remark}
There is a corresponding result for $\CO_\lambda$, but we will not use it.  Over $\CC$ this states that $\CO_\lambda$ vanishes on $\QH^*(X)_\mu$ if $\mu \neq \lambda$; see \cite[Proposition 2.9]{SheridanFano} or \cite[Theorem 9.6]{RitterSmith}.  In general (i.e.~without assuming that $\kk$ is algebraically closed) it states that $\CO_\lambda$ vanishes on the unique $c_1 \qprod$-invariant complement $C$ to $\QH^*(X)_\lambda$ in $\QH^*(X)$.  This can be proved by factorising the characteristic polynomial $\chi(T)$ of $c_1\qprod$ as $(T-\lambda)^m p(T)$ for some non-negative integer $m$ and some polynomial $p$ with $p(\lambda) \neq 0$.  By B\'ezout's lemma there exist polynomials $f$ and $g$ such that $1 = (T-\lambda)^mf(T) + p(T)g(T)$.  The maps $(c_1 - \lambda)^mf(c_1) \qprod$ and $p(c_1)g(c_1)\qprod$ then represent projection onto $C$ and $\QH^*(X)_\lambda$ respectively.  One argues, as in \cite[Proposition 2.9]{SheridanFano}, that $\CO_{\lambda}(p(c_1))$ is invertible in $\HH^*(\Fuk(X)_\lambda)$.  On the other hand, by Cayley--Hamilton we have $\chi(c_1) = 0$ in $\QH^*(X)$, and hence
\begin{equation}
\label{eqCOespace}
0 = \CO_\lambda(\chi(c_1))= \CO_\lambda((c_1 - \lambda)^m p(c_1)) = \CO_\lambda((c_1 - \lambda)^m) \cup \CO_\lambda(p(c_1)),
\end{equation}
where $\cup$ is the cup product on $\HH^*(\Fuk(X)_\lambda)$.  Combining \eqref{eqCOespace} with invertibility of $\CO_\lambda(p(c_1))$, we deduce that $\CO_\lambda((c_1 - \lambda)^m) = 0$.  Therefore $\CO_\lambda$ annihilates all multiples of $(c_1-\lambda)^mf(c_1)$, and hence annihilates $C$.
\end{remark}

The category $\Fuk(X)_\lambda$ is proper, so its Hochschild homology comes equipped with its Shklyarov pairing.  Meanwhile, quantum cohomology carries the Poincar\'e pairing $\Poinc{{-}}{{-}}$.  Crucially, $\OC_\lambda$ is compatible with these pairings in the following sense.

\begin{proposition}[{\cite[Theorem 31, credited to Ganatra--Perutz--Sheridan]{GanatraAutomaticGeneration}}]
\label{propOCisometry}
For any classes $a$ and $b$ in $\HH_*(\Fuk(X)_\lambda)$ we have
\begin{flalign*}
&& \Shk{a}{b} &= (-1)^{n(n+1)/2}\Poinc{\OC_\lambda(a)}{\OC_\lambda(b)}. && \qed
\end{flalign*}
\end{proposition}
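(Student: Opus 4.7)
The plan is a TQFT-style cobordism argument: construct a one-parameter family of Floer-theoretic moduli problems whose two boundary specialisations compute the two sides of the identity, and conclude by Stokes' theorem applied to the compactified parameterised moduli space. First I would fix chain-level models. For $\OC_\lambda$ I would use the standard moduli of pseudoholomorphic discs with cyclically-ordered boundary inputs from a Hochschild chain and one interior marked point constrained to lie on a pseudocycle representing the quantum cohomology output. For the Shklyarov pairing I would use Sheridan's explicit formula \cite[(35)]{SheridanHodge}, built from a pair of cyclic Hochschild chains glued against the categorical trace arising from Poincar\'e duality on each $\HF^*(K,K)$.

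The interpolating moduli space is that of pseudoholomorphic annuli in $X$ with two boundary components, one carrying the $k+1$ inputs of $a$ and the other the $l+1$ inputs of $b$, with Lagrangian boundary labels cycling as prescribed by the two Hochschild chains; the conformal modulus provides the parameter. At one extreme the annulus develops an interior node, breaking into a sphere with two interior marked points attached to two discs, each disc carrying one Hochschild chain and one interior marked point --- this configuration computes $\Poinc{\OC_\lambda(a)}{\OC_\lambda(b)}$, with the connecting sphere implementing the Poincar\'e pairing via PSS. At the other extreme the annulus pinches along a boundary-to-boundary arc into two discs joined at a boundary node, and the categorical trace appearing at that node delivers $\Shk{a}{b}$ in the form of Sheridan's formula. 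The remaining strata in the compactified family are the usual $A_\infty$-disc bubbling and Hochschild-differential boundary contributions, which cancel in pairs using that $a$ and $b$ are cycles. Interior sphere bubbles carry the eigenvalue $\lambda$ in both degenerations, so the $\lambda$-dependence matches.

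The main obstacle will be the sign $(-1)^{n(n+1)/2}$. It collects contributions from the Koszul signs in Sheridan's Shklyarov formula, the degree shift by $n$ in $\OC_\lambda$, the orientation of the diagonal of $X\times X$ used to realise the Poincar\'e pairing by intersection (equivalently, the sphere-gluing of PSS), and the orientation reversal incurred when identifying the two boundary circles of the annulus, which reverses cyclic order on one side. Reconciling these requires a meticulous choice of orientations on the parameterised annulus moduli and its two limits; the geometric picture is essentially classical, but it is precisely this orientation bookkeeping that constitutes the substantive content of the Ganatra--Perutz--Sheridan proof.
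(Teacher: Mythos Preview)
The paper does not prove this proposition: it is stated with a citation to \cite[Theorem 31]{GanatraAutomaticGeneration} (credited there to Ganatra--Perutz--Sheridan) and closed with a \qedsymbol, so it is taken as a black box from the literature. There is therefore no proof in the paper to compare your proposal against.

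That said, your sketch is the correct strategy, and is essentially the argument in the cited references: one interpolates via a parameterised moduli space of annuli whose two degenerations (interior pinch versus boundary pinch) realise the two sides of the identity, with the remaining boundary strata absorbed by the Hochschild differential. Your identification of the sign bookkeeping as the genuinely delicate step is also accurate. One small correction: at the boundary-pinch end the annulus does not break into two discs joined at a single boundary node; rather the modulus tends to infinity and the annulus degenerates into two discs joined along a pair of boundary marked points (a ``two-pointed'' open-string gluing), which is what produces the bimodule-style composition in Sheridan's formula for the Shklyarov pairing. Apart from that, your outline matches the standard proof.
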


Given a full subcategory $\calA \subset \Fuk(X)_\lambda$ we can compose $\CO_\lambda$ with the restriction map
\[
\HH^*(\Fuk(X)_\lambda) \to \HH^*(\calA)
\]
to give a homomorphism $\CO_\calA : \QH^*(X) \to \HH^*(\calA)$.  Similarly we can compose $\OC_\lambda$ with the homomorphism $\HH_*(\calA) \to \HH_*(\Fuk(X)_\lambda)$ induced by the inclusion functor to give a homomorphism $\OC_\calA : \HH_*(\calA) \to \QH^{*+n}(X)$.


\subsection{The weak proper Calabi--Yau structure}
\label{sscwpCY}

Sheridan shows \cite[Section 2.8]{SheridanFano} that $\Fuk(X)_\lambda$ also carries an \emph{$n$-dimensional weak proper Calabi--Yau (wpCY) structure} $[\phi] \in \HH_n(\Fuk(X)_\lambda)^\vee$.

\begin{lemma}[{\cite[Lemma A.2]{SheridanFano}}]
\label{lemwpCY}
This induces a perfect pairing
\[
\langle{-}, {-}\rangle_\mathrm{wpCY} : \HH^*(\Fuk(X)_\lambda) \times \HH_{n-*}(\Fuk(X)_\lambda) \to \kk \quad \text{via} \quad \langle \psi, b \rangle_\mathrm{wpCY} = [\phi] (\psi \cap b).
\]
The same arguments show that the pairing remains perfect after restricting to a full subcategory $\calA$ on a subset of Lagrangians.  We denote this restricted pairing by $\langle{-}, {-}\rangle_{\mathrm{wpCY}, \calA}$.\hfill$\qed$
\end{lemma}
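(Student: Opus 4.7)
My plan is to deduce the perfection of the pairing from the standard interpretation of a weak proper Calabi--Yau structure as a quasi-isomorphism of bimodules. Write $\calC = \Fuk(X)_\lambda$. The cohomology class $[\phi] \in \HH_n(\calC)^\vee$ is equivalent data to a closed bimodule map $\calC_\Delta \to \calC^\vee[n]$, where $\calC_\Delta$ is the diagonal bimodule and $\calC^\vee$ is its linear dual; under this dictionary the wpCY condition is precisely that this bimodule map is a quasi-isomorphism. This equivalence is chain-level and follows from the identification between Hochschild $n$-cochains with values in $\calC^\vee$ and $\kk$-linear functionals on Hochschild $n$-chains of $\calC$.

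Given the bimodule quasi-isomorphism, the plan is to apply Hochschild cohomology with coefficients, yielding an isomorphism
\[
\HH^*(\calC) = \HH^*(\calC, \calC_\Delta) \xto{\simeq} \HH^*(\calC, \calC^\vee[n]) = \HH^{*-n}(\calC, \calC^\vee).
\]
Next I would invoke the standard duality $\rC^*(\calC, M^\vee) \cong \rC_*(\calC, M)^\vee$, which is valid for any bimodule $M$ since we are taking linear duals over the field $\kk$; this is essentially an $(\otimes, \Hom)$-adjunction applied to the bar complex. Setting $M = \calC_\Delta$ converts the right-hand side into $\HH_{n-*}(\calC)^\vee$, so the composition produces an isomorphism $\HH^*(\calC) \xto{\simeq} \HH_{n-*}(\calC)^\vee$. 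I would then unpack the explicit bar-complex models to check that this composite isomorphism sends a class $\psi$ to the functional $b \mapsto [\phi](\psi \cap b)$; once this is verified, perfection of $\langle {-}, {-} \rangle_\mathrm{wpCY}$ is automatic.

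For the extension to a full subcategory $\calA \subset \calC$, I would observe that the underlying cyclic cocycle $\phi$ restricts to a cocycle on $\rC_*(\calA)$, and the corresponding bimodule map $\calA_\Delta \to \calA^\vee[n]$ is simply the restriction of the one for $\calC$. A morphism of bimodules is a quasi-isomorphism precisely when it is one on every hom-space, so pointwise quasi-isomorphism is preserved under restriction to a full subcategory. The identical chain of identifications then produces a perfect pairing on $\HH^*(\calA) \times \HH_{n-*}(\calA)$, as claimed.

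The main obstacle I anticipate is the last step of the first part: matching the abstract isomorphism obtained from the two bimodule identifications with the concrete formula $\psi \mapsto (b \mapsto [\phi](\psi \cap b))$. This is essentially a sign-and-shift tracking exercise, needing care with the grading conventions for $\calC^\vee[n]$, the explicit definition of the cap product on Hochschild (co)chains, and the pairing that realises $\rC^*(\calC, M^\vee) \cong \rC_*(\calC, M)^\vee$. Once these are aligned, everything else is formal.
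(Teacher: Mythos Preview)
Your argument is correct and is essentially the standard proof, which is precisely the one given by Sheridan in \cite[Lemma A.2]{SheridanFano} that the paper cites; the paper itself does not supply an independent proof but simply invokes Sheridan's lemma with a $\qed$. Your identification of the one nontrivial step---matching the abstract bimodule isomorphism with the explicit cap-product formula---is accurate, and the restriction to a full subcategory follows exactly as you say, since being a quasi-isomorphism of bimodules is an objectwise condition.
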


He then shows that $\CO_\lambda$ and $\OC_\lambda$ are adjoint with respect to this pairing, in the following sense.

\begin{proposition}[{\cite[Proposition 2.6]{SheridanFano}}]
\label{propCOOCduality}
For $\alpha \in \QH^*(X)$ and $b \in \HH_{n-*}(\Fuk(X)_\lambda)$ we have
\[
\langle \CO_\lambda(\alpha), b\rangle_\mathrm{wpCY} = \langle \alpha, \OC_\lambda(b) \rangle_X.
\]
By the same arguments, for any full subcategory $\calA \subset \Fuk(X)_\lambda$ on a subset of Lagrangians, the maps $\CO_\calA$ and $\OC_\calA$ are likewise adjoint with respect to $\langle {-}, {-}\rangle_{\mathrm{wpCY}, \calA}$.\hfill$\qed$
\end{proposition}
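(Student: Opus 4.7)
The plan is to show that both sides compute the same TQFT count. For each $k \geq 0$ consider the moduli of pseudoholomorphic discs in $X$ with boundary on a cyclic chain of Lagrangians $L_0, L_1, \ldots, L_k$ (with $L_{k+1} = L_0$), one interior marked point constrained to a cycle $Z$ Poincar\'e-dual to $\alpha$, and $k+1$ cyclically ordered boundary marked points whose images pair with the tensor factors of a Hochschild chain representative of $b = \sum b_0 \otimes \cdots \otimes b_k$. Summed with signs, this gives a number $N(\alpha, b) \in \kk$.

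On the right-hand side, $\OC_\lambda$ is by definition built from exactly these discs, except that the interior marked point is converted into an output which sweeps a quantum cochain. The Poincar\'e pairing with $\alpha$ then imposes that this output lies on $Z$, giving $\langle \alpha, \OC_\lambda(b)\rangle_X = N(\alpha, b)$. On the left-hand side, a chain-level model of $\CO_\lambda(\alpha)$ is the Hochschild cochain sending $a_1 \otimes \cdots \otimes a_m$ to a count of discs with one interior marked point constrained to $Z$, inputs $a_1, \ldots, a_m$ along the boundary, and one boundary output. The Hochschild cap product $\CO_\lambda(\alpha) \cap b$ inserts this cochain into a cyclic representative of $b$, and applying the Calabi--Yau trace $[\phi]$---itself a disc count that glues off the remaining boundary chord---recomposes the configuration described above, yielding $N(\alpha, b)$ again.

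The main obstacle is a coherent chain-level setup: one must realise $\CO_\lambda$, $\OC_\lambda$, the cap product, and $[\phi]$ using compatible choices of strip-like ends and perturbation data so that the two unwindings land on the same signed count rather than differing by a cyclic rotation or a global sign. In the monotone setting transversality is classical, so this reduces to a combinatorial bookkeeping exercise in the style of Abouzaid and Ganatra. The extension to a full subcategory $\calA \subset \Fuk(X)_\lambda$ is then automatic: every Hochschild cochain or cycle appearing in the argument has all its tensor factors labelled by Lagrangians, so if $b$ is supported on $\calA$ then every moduli space involved already lives in $\calA$, and the same argument gives the adjunction of $\CO_\calA$ and $\OC_\calA$ against $\langle {-}, {-}\rangle_{\mathrm{wpCY}, \calA}$.
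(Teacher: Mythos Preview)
The paper does not give a proof of this proposition: it is quoted verbatim from Sheridan \cite[Proposition 2.6]{SheridanFano} and closed immediately with a $\qed$. Your sketch is the standard argument and is essentially Sheridan's own: both sides are identified with the same count of rigid discs with one interior marked point constrained to a cycle dual to $\alpha$ and a cyclic string of boundary marked points paired with a Hochschild chain for $b$; the two sides differ only in which marked point is regarded as output before the final pairing closes it off. Your remark that the extension to a full subcategory $\calA$ is automatic, since every moduli space appearing involves only Lagrangians labelling the chain $b$, is exactly the content of the paper's ``By the same arguments\dots'' clause.
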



\subsection{Decompositions and the generation}

For each object $T$ in $\Fuk(X)_\lambda$ let
\[
\CO_T : \QH^*(X) \to \HH^*(\hom^*(T, T))
\]
denote the restriction of $\CO_\lambda$ to $T$; this is just $\CO_\calA$ in the case where $\calA$ contains a single object, $T$.  Let
\[
\CO^0_T : \QH^*(X) \to \rH^*(\hom^*(T, T))
\]
denote the projection of $\CO_T$ to length-zero Hochschild cochains.  Given an idempotent $e \in \QH^*(X)$ of even degree, or equivalently a factor $Q = e \qprod \QH^*(X)$ of $\QH^*(X)$, there is an associated full subcategory $\Fuk(X)_{\lambda,Q}$ of $\Fuk(X)_\lambda$, comprising those objects $T$ for which $\CO^0_T(e) = 1_T \in \rH^*(\hom^*(T, T))$.

\begin{theorem}
\label{thmGen}
Suppose $\LL$ is a Lagrangian in $\Fuk(X)_\lambda$, and that its endomorphism $A_\infty$-algebra $\calA$ is smooth.  The following then hold:
\begin{enumerate}
\item\label{genOC} $\OC_\calA$ gives an isomorphism from $\HH_*(\calA)$ onto its image $Q \subset \QH^*(X)$.
\item\label{genEspace} If $\Char \kk \neq 2$ or $L$ is orientable then $Q$ is contained in the generalised $\lambda$-eigenspace, $\QH^*(X)_\lambda$, of $c_1\qprod$.
\item\label{genVSdecomp} As a $\ZZ/2$-graded vector space, $\QH^*(X)$ is the internal direct sum $Q \oplus Q^\perp$.
\item\label{genAlgDecomp} Both $Q$ and $Q^\perp$ are two-sided ideals in $\QH^*(X)$, so we obtain a decomposition of $\ZZ/2$-graded algebras $\QH^*(X) = Q \times Q^\perp$.
\item\label{genIdemp} The $Q$-component $e$ of $1_X \in \QH^*(X)$ is an even-degree idempotent, satisfying $Q = e \qprod \QH^*(X)$ and $Q^\perp = (1_X-e) \qprod \QH^*(X)$.
\item\label{genCO} $\CO_{\LL} = \CO_\calA$ induces an algebra isomorphism $Q \to \HH^*(\calA)$, and $\ker \CO_{\LL} = Q^\perp$.
\item\label{genSplitGen} $\LL$ lies in $\Fuk(X)_{\lambda, Q}$ and split-generates it.
\end{enumerate}
\end{theorem}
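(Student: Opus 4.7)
The plan is to leverage the smoothness of $\calA$ through the non-degeneracy of its Shklyarov pairing, together with the isometry of $\OC_\lambda$ with the Poincar\'e pairing and the adjointness of $\CO_\lambda$ and $\OC_\lambda$ via the wpCY pairing, to bootstrap the seven statements in order. Note that $\calA$ is automatically proper because $X$ is compact, so \cref{propSmoothProper} applies and makes $\HH_*(\calA)$ finite-dimensional with perfect Shklyarov pairing. If $a \in \HH_*(\calA)$ lies in $\ker \OC_\calA$, then the isometry \cref{propOCisometry} gives $\Shk{a}{b} = \pm \Poinc{0}{\OC_\calA(b)} = 0$ for every $b$, forcing $a = 0$; hence $\OC_\calA$ is injective, proving (i). Statement (ii) is immediate from \cref{propEspace}.

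For (iii)--(v), the same isometry implies that the Poincar\'e pairing restricted to $Q$ is, up to sign, the image of the perfect Shklyarov pairing, and so $Q \cap Q^\perp = 0$; combined with finite-dimensionality of $\QH^*(X)$ and perfectness of $\Poinc{-}{-}$, this yields $\QH^*(X) = Q \oplus Q^\perp$. That $Q$ is a two-sided ideal follows because $\OC_\lambda$ is a $\QH^*(X)$-module map (for the action via $\CO_\lambda$ and cap product); that $Q^\perp$ is also an ideal follows from the Frobenius identity $\Poinc{\alpha \qprod \beta}{\gamma} = \Poinc{\beta}{\alpha \qprod \gamma}$ combined with $Q$ being an ideal. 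Decomposing $1_X = e + e^\perp$ along $Q \oplus Q^\perp$ and using $e \qprod e^\perp = 0$ (intersection of orthogonal ideals) forces $e$ and $e^\perp$ to be orthogonal idempotents; evenness of $e$ comes from evenness of $1_X$ combined with the fact that the splitting respects the $\ZZ/2$-grading.

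For (vi), the adjointness \cref{propCOOCduality} reads $\langle \CO_\calA(\alpha), b \rangle_\mathrm{wpCY} = \Poinc{\alpha}{\OC_\calA(b)}$, and by \cref{lemwpCY} the restricted wpCY pairing is perfect. Hence $\CO_\calA(\alpha) = 0$ iff $\alpha$ is Poincar\'e-orthogonal to $Q$, i.e.\ $\ker \CO_\calA = Q^\perp$. Restricting to $Q$ yields an injective algebra homomorphism, which is an isomorphism by the dimension count $\dim Q = \dim \HH_*(\calA) = \dim \HH^*(\calA)$ (the second equality from the perfect wpCY pairing). For (vii), $\CO_\calA(e) = \CO_\calA(1_X) - \CO_\calA(e^\perp) = 1 \in \HH^*(\calA)$ because $e^\perp \in Q^\perp = \ker \CO_\calA$, and projecting to length zero gives $\CO^0_\LL(e) = 1_\LL$, so $\LL \in \Fuk(X)_{\lambda, Q}$. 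Split-generation of $\Fuk(X)_{\lambda, Q}$ by $\LL$ then follows from the Abouzaid-style criterion: once $e$ is known to lie in the image of $\OC_\calA$ and to satisfy $\CO^0_T(e) = 1_T$ for every $T \in \Fuk(X)_{\lambda, Q}$, the standard argument concludes that $T$ is split-generated by $\LL$.

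The main obstacle is delivering this split-generation conclusion without assuming cyclicity on $\calA$, which was the hypothesis in Sanda's original formulation. Here the role played by cyclicity is taken over by the combination of smoothness (via the perfect Shklyarov pairing) and the wpCY structure; all the requisite pieces come from \cref{propSmoothProper}, \cref{propOCisometry}, \cref{propCOOCduality} and \cref{lemwpCY}, and the remaining work is the careful bookkeeping needed to thread them through the adjunction/isometry/duality argument outlined above.
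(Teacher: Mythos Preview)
Your proposal is correct and follows essentially the same approach as the paper. The only notable difference is in part \ref{genCO}: the paper's main argument establishes injectivity of $\CO_\calA$ on $Q$ via the identity $\alpha = \OC_\lambda(i_*(\CO_\calA(\alpha) \cap \eps))$ and then shows $\ker \CO_\calA \subset Q^\perp$ separately, whereas you go directly through the adjointness $\langle \CO_\calA(\alpha), b\rangle_{\mathrm{wpCY},\calA} = \Poinc{\alpha}{\OC_\calA(b)}$ to get $\ker \CO_\calA = Q^\perp$ in one stroke---this is the route the paper sketches in its post-proof remark, and is arguably cleaner. Your argument for $\CO_\calA(e) = 1$ in \ref{genSplitGen} (using $e^\perp \in \ker \CO_\calA$) is likewise more direct than the paper's detour through an auxiliary element $e'$. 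One small point worth spelling out in \ref{genAlgDecomp}: the module property of $\OC_\lambda$ gives that $Q$ is a \emph{left} ideal, and one still needs graded-commutativity of $\QH^*(X)$ (or the Frobenius property plus $Q^\perp = {}^\perp Q$) to promote this to a two-sided ideal; the paper makes this explicit.
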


\begin{proof}
\ref{genOC} The algebra $\calA$ is smooth by assumption, and proper because we're dealing with compact Lagrangians, so by \cref{propSmoothProper} $\HH_*(\calA)$ is finite-dimensional and its Shklyarov pairing is perfect.  Since $\OC_\lambda$ is an isometry with respect to this pairing, in the sense of \cref{propOCisometry}, we deduce that it is injective on $\HH_*(\calA)$ and hence induces an isomorphism onto $Q$.

\ref{genEspace} This follows immediately from \cref{propEspace}.

\ref{genVSdecomp} From the proof of \ref{genOC}, the restriction of the Poincar\'e pairing to $Q$ is perfect.  The vector space decomposition of $\QH^*(X)$ then follows from the elementary linear algebra result that if $V$ is a finite-dimensional vector space, $\beta$ is an arbitrary bilinear form on $V$, and $W \subset V$ is a subspace on which $\beta$ is non-degenerate, then $V = W \oplus W^{\perp_\beta}$.  The fact that $Q$ is actually a $\ZZ/2$-graded subspace is an immediate consequence of $\OC_\lambda$ being a $\ZZ/2$-graded map.  $Q^\perp$ is then automatically $\ZZ/2$-graded since the Poincar\'e pairing is $\ZZ/2$-graded.  Note also that since the Poincar\'e pairing is symmetric on even-degree elements and skew-symmetric on odd-degree elements we have $Q^\perp = {}^\perp Q$.

\ref{genAlgDecomp} Suppose we can show that $Q$ is a left ideal.  The Frobenius algebra property of $\QH^*(X)$ then tells us that ${}^\perp Q$ ($=Q^\perp$) is a right ideal.  Since both $Q$ and $Q^\perp$ are $\ZZ/2$-graded subspaces, graded-commutativity of $\QH^*(X)$ then implies that both are in fact two-sided ideals, and we're done.

It therefore suffices to show that $Q$ is a left ideal, so take $q \in Q$ and $\alpha \in \QH^*(X)$.  We wish to show that $\alpha q \in Q$.  By definition of $Q$ there exists $a \in \HH_*(\calA)$ with $q = \OC_\calA(a)$.  We can write this as $\OC_\lambda(i_*a)$, where $i_* : \HH_*(\calA) \to \HH_*(\Fuk(X)_\lambda)$ is the homomorphism induced by inclusion $\calA \to \Fuk(X)_\lambda$.  Using the fact that $\OC_\lambda$ is a $\QH^*(X)$-module map we obtain
\[
\alpha q = \alpha \OC_\lambda(i_*a) = \OC_\lambda(\CO_\lambda(\alpha) \cap i_*a).
\]
The image of $i_*$ is manifestly a $\HH^*(\Fuk(X)_\lambda)$-submodule of $\HH_*(\Fuk(X)_\lambda)$, so it contains $\CO_\lambda(\alpha) \cap i_*a$.  This means that $\alpha q$ is in the image of $\OC_\lambda \circ i_* = \OC_\calA$, which is precisely $Q$, as wanted.

\ref{genIdemp} It follows from \ref{genVSdecomp} that $1_X$ splits into even-degree pieces $e \in Q$ and $1_X-e \in Q^\perp$.  The algebra decomposition in \ref{genAlgDecomp} then tells us that $e (1_X - e) = 0$, so $e$ is idempotent.  It also tells us that $e$ and $1_X - e$ are the units in $Q$ and $Q^\perp$ respectively, so $Q = e \QH^*(X)$ and $Q^\perp = (1-e) \QH^*(X)$.

\ref{genCO} Let $\eps \in \HH_*(\calA)$ be the unique element satisfying $e = \OC_\calA(\eps)$.  Again writing $\OC_\calA$ as $\OC_\lambda \circ i_*$ and using the fact that $\OC_\lambda$ is a $\QH^*(X)$-module map, we see that for all $\alpha \in Q$
\[
\alpha = \alpha e = \OC_\lambda(\CO_\lambda(\alpha) \cap i_*\eps) = \OC_\lambda(i_*(\CO_\calA(\alpha) \cap \eps)).
\]
This forces $\CO_{\LL} = \CO_\calA$ to be injective on $Q$.  By now combining \ref{genOC} with the equality $\dim \HH^*(\calA) = \dim \HH_{n-*}(\calA)$ resulting from \cref{lemwpCY} we obtain $\dim \HH^*(\calA) = \dim Q$, and deduce that $\CO_{\LL}$ is actually an isomorphism $Q \to \HH^*(\calA)$.

It remains to show that $\ker \CO_{\LL} = Q^\perp$, and since we know
\[
\dim Q^\perp = \dim \QH^*(X) - \dim Q \leq \dim \QH^*(X) - \operatorname{rank} \CO_{\LL} = \dim \ker \CO_{\LL}
\]
it's enough to show that $\ker \CO_{\LL} \subset Q^\perp$.  Suppose then that $\alpha \in \ker \CO_{\LL}$.  Since $\CO_{\LL}$ is an algebra homomorphism we get $\CO_{\LL}(e \alpha) = 0$.  But $e \alpha$ is in $Q$, so by injectivity of $\CO_{\LL}$ on $Q$ we deduce that $e \alpha = 0$, and hence $\alpha = (1_X - e) \alpha$.  Thus $\alpha \in (1_X - e) \QH^*(X) = Q^\perp$, and so $\ker \CO_{\LL} \subset Q^\perp$.

\ref{genSplitGen} To prove that $\LL$ lies in $\Fuk(X)_{\lambda,Q}$ we need to show that $\CO^0_{\LL}(e) = 1_{\LL}$, for which it suffices to show that $\CO_{\LL}(e)$ is the unit $1_{\HH^*(\calA)}$ in $\HH^*(\calA)$.  To do this, note that since $\CO_{\LL}$ induces an isomorphism $Q \to \HH^*(\calA)$ there exists a unique $e' \in Q$ such that $\CO_{\LL}(e') = 1_{\HH^*(\calA)}$.  We then have
\[
\CO_{\LL}(e') = \CO_{\LL}(e e') = \CO_{\LL}(e) \cup \CO_{\LL}(e') = \CO_{\LL}(e).
\]
Injectivity of $\CO_{\LL}$ on $Q$ gives $e' = e$, and hence $\CO_{\LL}(e) = 1_{\HH^*(\calA)}$, as wanted.

For any other object $T$ in $\Fuk(X)_{\lambda, Q}$, we have $\CO^0_T(e) = 1_T$ by definition.  So $\CO^0_T \circ \OC_\calA(\eps) = 1_T$, where $\eps \in \HH_*(\calA)$ is the unique element with $\OC_\calA(\eps) = e$, as above.  The fact that $T$ is split-generated by $\LL$ then follows from Abouzaid's generation criterion \cite[Proposition 1.3, Lemma 1.4]{AbouzaidGeometricCriterion}.
\end{proof}

\begin{remark}
An alternative approach is to modify the argument of \cite[Theorem 4.10]{Sanda} as follows (\cite{Sanda} assumes the existence of a cyclic structure on $\Fuk(X)_\lambda$, which we circumvent using the wpCY structure).  By the first part of the above proof, $\OC_\calA$ is an isomorphism from $\HH_*(\calA)$ onto its image, on which $\langle {-}, {-} \rangle_X$ is perfect.  For $a$ and $b$ in $\HH_*(\calA)$ \cref{propCOOCduality} gives
\[
\langle \CO_\calA \circ \OC_\calA(a), b \rangle_{\mathrm{wpCY}, \calA} = \langle \OC_\calA(a), \OC_\calA(b) \rangle_X,
\]
and $\langle {-}, {-} \rangle_{\mathrm{wpCY}, \calA}$ is perfect by \cref{lemwpCY}.  We conclude that $\CO_\calA \circ \OC_\calA$ induces an isomorphism
\[
\HH_{*-n}(\calA) \to \HH^*(\calA).
\]
Let $\eps \in \HH_{-n}$ be the unique element with $\CO_\calA \circ \OC_\calA (\eps) = 1_{\HH^*(\calA)}$, and let $e \in \QH^0(X)$ be $\OC_\calA(\eps)$.  We then have
\[
e^2 = e \OC_\calA(\eps) = \OC_\lambda(i_*(\CO_\calA(e) \cap \eps)) = \OC_\lambda(i_*(1_{\HH^*(\calA)} \cap \eps)) = \OC_\calA(\eps) = e,
\]
so $e$ is an idempotent and hence induces an algebra decomposition $\QH^*(X) = Q \times Q^\perp$, where $Q = e \QH^*(X)$ and $Q^\perp = (1_X - e) \QH^*(X)$.  One can then complete the proof similarly to above.
\end{remark}


\section{Proof of the main results}
\label{secProofMain}


\subsection{A single isolated critical point}
\label{sscLMF}

Suppose $L \subset X$ is a monotone Lagrangian torus with superpotential $W_L$, viewed as an element of $S = \kk[\rH_1(L; \ZZ)]$.  Suppose $\calL$ is an isolated critical point of $W_L$, in the sense of \cref{lemIsolatedTFAE}, and let $\lambda = W_L(\calL)$ be the corresponding critical value.  Then let $\LL \in \Fuk(X)_\lambda$ be the object obtained by equipping $L$ with the local system $\calL$.  We denote the $A_\infty$-algebra $\hom^*(\LL, \LL) = \CF^*(\LL, \LL)$ by $\calA$, which we also view as an $A_\infty$-category with one object.

In \cite{ChoHongLauTorus} Cho--Hong--Lau construct a \emph{localised mirror functor}
\[
\LMF : \Fuk(X)_\lambda \to \mf(S, W_L - \lambda).
\]
The critical point $\calL$ of $W_L$ corresponds to a point in $\Spec S$, whose skyscraper sheaf defines an object $\calE$ in $\mf(S, W_L - \lambda)$.  Letting $\calB$ denote the endomorphism dg-algebra of $\calE$, it is shown in \cite[Theorem 5]{SmithSuperfiltered} that $\LMF$ induces a quasi-isomorphism $\Phi : \calA \to \calB$.

\begin{remark}
A priori the functor is defined on actual monotone Lagrangians in $\Fuk(X)_\lambda$, rather than on more general summands of twisted complexes, but this is enough for our purposes.
\end{remark}

Let $S_\calL$ denote the localisation of $S$ at the maximal ideal $\m$ corresponding to $\calL$, and let $\hatS$ denote the $\m$-adic completion of $S$.  We can consider the analogues of $\calE$ and $\calB$ over $S_\calL$ and $\hatS$, which we denote by adding a subscript $\calL$ or a hat.

\begin{lemma}
\label{lemBqis}
The natural maps $\calB \to \calB_\calL \to \hatB$ are quasi-isomorphisms.
\end{lemma}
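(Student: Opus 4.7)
The plan is to deduce the lemma from the flatness and exactness properties of localisation and completion recorded in \cref{lemCommAlg}, together with the observation that the $S$-module $H^*(\calB)$ is annihilated by some power of $\m$.

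Since the matrix factorisation $\calE$ is a free $S$-module of finite rank in each $\ZZ/2$-degree, the dg-$S$-algebra $\calB = \End_S(\calE)$ is a $\ZZ/2$-graded complex whose two components are each finitely generated $S$-modules. As $S$ is Noetherian, \cref{lemCommAlg} lets me identify the chain-level maps $\calB \to \calB_\calL \to \hatB$ with the exact base changes along $S \to S_\calL \to \hatS$; at the level of cohomology they become the natural homomorphisms
\[
H^*(\calB) \to H^*(\calB) \otimes_S S_\calL \to H^*(\calB) \otimes_S \hatS.
\]
Via $\Phi : \calA \to \calB$ I may identify $H^*(\calB)$ with $\HF^*(\LL, \LL)$, which is finite-dimensional over $\kk$ and hence in particular finitely generated as an $S$-module.

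The key claim is that $H^*(\calB)$ is annihilated by some power of $\m$. Granting this, its support in $\Spec S$ is exactly $\{\calL\}$, so $H^*(\calB) \otimes_S S_\calL \cong H^*(\calB)$; and being $\m$-primary it coincides with its own $\m$-adic completion, giving $H^*(\calB) \otimes_S \hatS \cong H^*(\calB)$ as well. Both of the displayed maps are then isomorphisms, proving the lemma.

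For the claim itself, the strategy is to observe that the $S$-action on $H^*(\calB) = \Ext^*_{\mf(S, W_L - \lambda)}(\calE, \calE)$ is induced by scalar multiplication on $\calE$, and that since $\calE$ is the matrix factorisation corresponding to the skyscraper sheaf at $\calL$, any element of $\m$ acts on $\calE$ by a chain endomorphism that vanishes on the associated coherent sheaf and is therefore nullhomotopic in $\mf(S, W_L - \lambda)$, so $\m$ acts as zero on $\Ext^*(\calE, \calE)$. The main subtlety is making the nullhomotopy argument genuinely rigorous in the matrix factorisation setting. I expect the cleanest route is to bypass this directly by appealing to Dyckerhoff's description \cite{Dyckerhoff} of $\Ext^*(\calE, \calE)$ as the Clifford algebra of the Hessian of $W_L - \lambda$ at $\calL$ (generalised appropriately at degenerate isolated critical points), which makes manifest that the $S$-action factors through evaluation at $\calL$ and thus that $\m$ annihilates the Ext algebra.
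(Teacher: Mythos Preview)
Your proposal is correct and follows essentially the same route as the paper: use \cref{lemCommAlg} to identify the induced maps on cohomology with the base-change maps $\rH^*(\calB) \to \rH^*(\calB) \otimes_S S_\calL \to \rH^*(\calB) \otimes_S \hatS$, and then argue that these are isomorphisms because $\rH^*(\calB)$ is supported only at $\calL$. The paper's justification for the last point is simply that $\calE$, being the stabilisation of the skyscraper $S/\m$, is annihilated by $\m$, so $\rH^*(\calB)$ is too---your nullhomotopy argument is exactly right and needs no further appeal to Dyckerhoff, and indeed $\m$ itself (not merely a power) already kills the Ext algebra.
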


\begin{proof}
The algebra $\calB$ is a finite rank free module over $S$, and $\calB_\calL$ and $\hatB$ are obtained by tensoring over $S$ with $S_\calL$ and $\hatS$ respectively.  By \cref{lemCommAlg} (using the fact that $S_\calL$ is Noetherian), the induced maps $\rH^*(\calB) \to \rH^*(\calB_\calL) \to \rH^*(\hatB)$ are
\[
\rH^*(\calB) \to \rH^*(\calB) \otimes_S S_\calL \to \rH^*(\calB) \otimes_S \hatS.
\]
These are isomorphisms since $\rH^*(\calB)$ is annihilated by $\m$ (because $\calE$ is).
\end{proof}

Because $\calL$ is an isolated critical point of $W_L$, the category $\mf(S_\calL, W_L - \lambda)$ falls into the setting considered by Dyckerhoff in \cite{Dyckerhoff} (more precisely, $(S_\calL, W_L - \lambda)$ satisfies his condition (B)), where he shows that $\calE_\calL$ split-generates $\mf(S_\calL, W_L - \lambda)$ \cite[Corollary 5.3]{Dyckerhoff}.  Note that our $S_\calL$ and $\calE_\calL$ correspond to Dyckerhoff's $R$ and $k^\mathrm{stab}$ respectively, that our $S$ is different from his, and that he writes $\widehat{\calC}_\mathrm{pe}$ for what we denote by $\PiTw \calC$.  Combining the above ideas allows us to prove the following.

\begin{proposition}
\label{propCategoryDescriptions}
We have quasi-equivalences
\begin{equation}
\label{eqqeq}
\PiTw \calA \simeq \operatorname{\Pi} \mf(S_\calL, W_L - \lambda) \simeq \mf(\hatS, W_L - \lambda).
\end{equation}
In other words, the full subcategory of $\Fuk(X)_\lambda$ split-generated by $\LL$ is quasi-equivalent to $\mf(\hatS, W_L - \lambda)$.  Under this quasi-equivalence, $\LL$ is sent to the matrix factorisation $\hatE$ associated to the skyscraper sheaf at $\calL$.
\end{proposition}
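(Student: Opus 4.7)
The plan is to chain together three comparisons: the quasi-isomorphism $\Phi : \calA \to \calB$ from \cite[Theorem 5]{SmithSuperfiltered}, the quasi-isomorphisms $\calB \to \calB_\calL \to \hatB$ from \cref{lemBqis}, and Dyckerhoff's split-generation result \cite[Corollary 5.3]{Dyckerhoff}. The object $\LL$ will be tracked through at each stage, going first to $\calE$, then to $\calE_\calL$, and ultimately to $\hatE$.

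First I would apply $\PiTw$ to the chain of quasi-isomorphic one-object $A_\infty$-categories. Morita invariance of the split-closed pretriangulated envelope under quasi-isomorphisms then yields
\[
\PiTw \calA \simeq \PiTw \calB \simeq \PiTw \calB_\calL \simeq \PiTw \hatB.
\]

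Next I would identify $\PiTw \calB_\calL$ with $\oPi \mf(S_\calL, W_L - \lambda)$. Since $\calB_\calL$ is by definition the endomorphism dg-algebra of $\calE_\calL$ in $\mf(S_\calL, W_L - \lambda)$, Yoneda at $\calE_\calL$ gives a cohomologically full and faithful embedding of the one-object dg-category $\calB_\calL$ into $\mf(S_\calL, W_L - \lambda)$. Taking split-closed pretriangulated envelopes (and noting that $\mf$ is already pretriangulated, so $\PiTw$ reduces to $\oPi$ there) yields a full and faithful embedding $\PiTw \calB_\calL \hookrightarrow \oPi \mf(S_\calL, W_L - \lambda)$, and Dyckerhoff's theorem is precisely the statement that this embedding is essentially surjective, hence a quasi-equivalence. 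This gives the first equivalence in \eqref{eqqeq}.

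For the second equivalence I would rerun the same argument with $(S_\calL, \calE_\calL, \calB_\calL)$ replaced by $(\hatS, \hatE, \hatB)$, since $(\hatS, W_L - \lambda)$ satisfies Dyckerhoff's hypothesis (B) just as well; this gives $\PiTw \hatB \simeq \oPi \mf(\hatS, W_L - \lambda)$. To drop the $\oPi$, I would then observe that $\hatS$ is a complete Noetherian local ring, over which every finitely generated projective module is free, so that idempotent endomorphisms of matrix factorisations already split inside $\mf(\hatS, W_L - \lambda)$; this category is therefore idempotent-complete, and $\oPi \mf(\hatS, W_L - \lambda) \simeq \mf(\hatS, W_L - \lambda)$. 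Concatenating the chain yields \eqref{eqqeq}, and the tracking of $\LL$ to $\hatE$ is built into the construction. The step I expect to be the main obstacle is the last one: verifying that idempotent-completion is superfluous on the $\hatS$ side requires splitting cochain-level (not merely cohomology-level) idempotents in the dg-category of matrix factorisations, which is plausible from Krull--Schmidt over the complete local ring combined with a homological perturbation argument, but needs to be articulated with care.
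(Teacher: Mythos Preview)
Your proposal is correct and follows essentially the same route as the paper: chain the quasi-isomorphisms $\calA \to \calB \to \calB_\calL$, apply $\PiTw$, and then invoke Dyckerhoff's split-generation \cite[Corollary 5.3]{Dyckerhoff} to identify $\PiTw \calB_\calL$ with $\oPi \mf(S_\calL, W_L - \lambda)$.

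The only divergence is in how you obtain the second quasi-equivalence in \eqref{eqqeq}. The paper simply cites \cite[Theorem 5.7]{Dyckerhoff}, which says directly that the idempotent completion of $\mf(S_\calL, W_L - \lambda)$ is $\mf(\hatS, W_L - \lambda)$. You instead pass through $\PiTw \hatB$, re-run the split-generation argument over $\hatS$, and then need to know that $\mf(\hatS, W_L - \lambda)$ is already idempotent-complete. This last point---which you flag as the main obstacle---is in fact exactly what \cite[Theorem 5.7]{Dyckerhoff} gives when applied to the already-complete ring $\hatS$, so your worry dissolves and no separate Krull--Schmidt or perturbation argument is needed. Your route is therefore a slightly longer but equally valid path to the same destination; citing Dyckerhoff's Theorem 5.7 directly would shorten it to match the paper.
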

\begin{proof}
From the quasi-isomorphisms $\Phi : \calA \to \calB$ and $\calB \to \calB_\calL$ we get that $\PiTw \calA \simeq \PiTw \calB_\calL$.  Dyckerhoff's split-generation result tells us that $\PiTw \calB_\calL \simeq \Pi \mf(S_\calL, W_L - \lambda)$ (there is no need for a $\Tw$ here since matrix factorisation categories are already pretriangulated), so the first quasi-equivalence in \eqref{eqqeq} follows.  The second is then a consequence of Dyckerhoff's description \cite[Theorem 5.7]{Dyckerhoff} of idempotent completion as completion of the ring.
\end{proof}

We also obtain the following two results.

\begin{proposition}
\label{propAsmooth}
The $A_\infty$-algebra $\calA$ is smooth.
\end{proposition}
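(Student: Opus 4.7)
The plan is to use Morita-invariance of smoothness \cite[Proposition 20]{GanatraAutomaticGeneration} to reduce the question to a matrix factorisation category, where smoothness is well known. By \cref{propCategoryDescriptions}, $\PiTw \calA$ is quasi-equivalent to $\mf(\hatS, W_L - \lambda)$, so it suffices to prove that the latter is smooth. Since $\hatE$ split-generates by Dyckerhoff's result used in the proof of \cref{propCategoryDescriptions}, this in turn reduces to smoothness of the endomorphism dg-algebra $\hatB$ of $\hatE$.

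Now $\Spec S = (\kk^\times)^n$ is smooth and $\calL$ is a $\kk$-rational closed point of it, so the completion $\hatS$ is isomorphic to the power series ring $\kk[[u_1, \dots, u_n]]$; and by hypothesis $W_L - \lambda$ has an isolated critical point at its closed point. In this setting the smoothness of $\mf(\hatS, W_L - \lambda)$ is standard: one may either invoke it directly from the matrix factorisation literature (Dyckerhoff), or derive it by exhibiting a finite resolution of the diagonal bimodule by Yoneda bimodules, obtained by tensoring two copies of the Koszul resolution of $\kk$ over $\hatS$ against each other and twisting by the curvature $W_L - \lambda$. This is in direct analogy with the proof that a regular complete local ring is smooth as a dg-algebra over itself.

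I expect the main point of care to be verifying that this twisted Koszul resolution genuinely lies in the perfect part of $\mf(\hatS, W_L - \lambda) \otimes \mf(\hatS, W_L - \lambda)^{\op}$ and correctly resolves the diagonal bimodule, with the matrix factorisation curvatures on the two factors cancelling. This is essentially bookkeeping on top of the underlying regular local ring case, and no conceptual obstacle is expected. One could also side-step the explicit construction entirely by quoting the general smoothness statement for matrix factorisations on a regular Noetherian ring with isolated critical locus.
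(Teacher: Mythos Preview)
Your proposal is correct and follows essentially the same route as the paper: invoke \cref{propCategoryDescriptions} to identify $\PiTw\calA$ with a matrix factorisation category, cite Dyckerhoff for smoothness of that category, and conclude via Morita-invariance of smoothness. The only cosmetic difference is that the paper quotes Dyckerhoff's smoothness result for $\mf(S_\calL, W_L-\lambda)$ directly (his condition (B) setting), whereas you pass all the way to the completion $\hatS$; since \cref{propCategoryDescriptions} already records the quasi-equivalence between these two models, either choice works, and your additional sketch of the explicit Koszul-type resolution is unnecessary once you are willing to cite Dyckerhoff.
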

\begin{proof}
Dyckerhoff shows that $\mf(S_\calL, W_L - \lambda)$ is smooth \cite[Section 7]{Dyckerhoff}.  The result then follows from \cref{propCategoryDescriptions}, by Morita-invariance of smoothness.
\end{proof}

\begin{proposition}
\label{propAHH}
$\HH^*(\calA)$ is isomorphic to the localisation $\Jac_\calL W_L$ of the Jacobian algebra of $W_L$ at $\calL$, concentrated in even degree.
\end{proposition}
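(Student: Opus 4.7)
The plan is to transfer the computation from $\calA$ to the matrix factorisation model supplied by \cref{propCategoryDescriptions}, and then invoke a Hochschild--Kostant--Rosenberg-type theorem. First, I would apply Morita invariance of Hochschild cohomology: the embedding $\calA \hookrightarrow \PiTw \calA$ and the quasi-equivalence $\PiTw \calA \simeq \mf(\hatS, W_L - \lambda)$ both induce isomorphisms on $\HH^*$ via the zigzag \eqref{eqHHmaps}, so
$$\HH^*(\calA) \cong \HH^*(\mf(\hatS, W_L - \lambda))$$
as $\ZZ/2$-graded unital $\kk$-algebras.

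Second, I would compute the right-hand side by the HKR-type formula for matrix factorisations used by Lin--Pomerleano: $\HH^*(\mf(\hatS, W_L - \lambda))$ is the cohomology of $(\Lambda^* T_{\hatS}, [W_L - \lambda, \bullet])$ with Schouten--Nijenhuis differential. Since the differential is contraction with $d(W_L - \lambda) = dW_L$, under the duality $\Lambda^k T_{\hatS} \cong \Lambda^{n-k} \Omega^1_{\hatS}$ induced by a generator of $\Lambda^n T_{\hatS}$ this complex is identified with the Koszul complex on the sequence $\partial W_L/\partial z_1, \dots, \partial W_L / \partial z_n$ inside the regular local ring $\hatS$. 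Because $\calL$ is an isolated critical point, \cref{lemIsolatedTFAE} forces this sequence to cut out a zero-dimensional (hence Artinian, hence complete intersection) quotient, so the partials are a regular sequence and the Koszul complex is a resolution concentrated in a single degree. This identifies $\HH^*(\mf(\hatS, W_L - \lambda))$ with the Jacobian ring of $W_L$ over $\hatS$, placed in $\HH^0$ and so sitting in even $\ZZ/2$-degree.

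Finally, I would identify this Jacobian ring with $\Jac_\calL W_L$ via the remark after \cref{corNonIsolated}: the Jacobian over $\hatS$ is $\widehat{\Jac W_L} \cong (\Jac W_L) \otimes_S \hatS$. Since $\calL$ is isolated, $\Jac_\calL W_L$ is Artinian local by \cref{lemIsolatedTFAE}, hence already $\m$-adically complete; while the complementary factor of $\Jac W_L$ (not supported at $\calL$) has $\m \cdot$-everything equal to itself and so completes to zero. Therefore $\widehat{\Jac W_L} \cong \Jac_\calL W_L$, finishing the identification.

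The main obstacle is the second step: pinning down the HKR-type formula in exactly the $\ZZ/2$-graded formal-local form we require. Lin--Pomerleano state their result for smooth affine varieties, so one must either verify that their argument carries over to the complete regular local ring $\hatS$, or invoke an equivalent formal-local version (for example via Dyckerhoff's model and Koszul duality). One should also be careful that the $\ZZ/2$-grading conventions---as fixed by the orientation/spin data on $L$ used to build $\calA$ and by the matrix factorisation grading on $\mf(\hatS, W_L - \lambda)$---agree under \cref{propCategoryDescriptions}, so that the single surviving degree of the Koszul complex really lands in even $\ZZ/2$-degree.
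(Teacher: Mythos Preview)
Your proposal is correct and follows essentially the same route as the paper: Morita invariance to a matrix factorisation category, then a Hochschild computation yielding the local Jacobian ring. The paper is more economical in the second step, citing Dyckerhoff's \cite[Corollary 6.5]{Dyckerhoff} directly for $\HH^*(\mf(S_\calL, W_L-\lambda)) \cong \Jac_\calL W_L$ rather than invoking Lin--Pomerleano and re-running the Koszul/regular-sequence argument; this also sidesteps exactly the formal-vs-affine concern you flagged, since Dyckerhoff already works in the local setting. The paper further offers an alternative route via $\HH^*(\calB)$ and \cite[Theorem 4]{SmithSuperfiltered}, with the passage from the Jacobian over $\hatS$ to $\Jac_\calL W_L$ carried out just as you describe.
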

\begin{proof}
By \eqref{eqqeq} and Morita invariance we have $\HH^*(\calA) \cong \HH^*( \mf (S_\calL, W_L - \lambda) )$.  The latter is computed by Dyckerhoff \cite[Corollary 6.5]{Dyckerhoff} to be exactly $\Jac_\calL W_L$.

Alternatively, we have $\HH^*(\calA) \cong \HH^*(\calB)$ and the latter is computed in \cite[Theorem 4]{SmithSuperfiltered} to be the Jacobian of $W_L$ over $\hatS$.  By an analogous argument to that at the end of the proof of \cref{propphii}, this is equivalent to the localisation $\Jac_\calL W_L$.
\end{proof}

Plugging \cref{propCategoryDescriptions,propAsmooth} into \cref{thmGen}, we obtain the following.

\begin{theorem}
\label{thmSingle}
$\QH^*(X)$ decomposes as a product of $\ZZ/2$-graded algebras
\[
Q \times Q^\perp = e \QH^*(X) \times (1_X-e) \QH^*(X),
\]
where $\CO_{\LL}$ has kernel $Q^\perp$ and induces an isomorphism $Q \to \HH^*(\calA) \cong \Jac_\calL W_L$.  The summand $\Fuk(X)_{\lambda, Q}$ of the Fukaya category contains $\LL$, which split-generates it, and is quasi-equivalent to $\mf (\hatS, W_L - \lambda)$.  Under this quasi-equivalence, $\LL$ corresponds to the matrix factorisation $\hatE$ associated to the skyscraper sheaf at $\calL$.\hfill$\qed$
\end{theorem}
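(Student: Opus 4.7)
The plan is to assemble \cref{thmSingle} directly from the ingredients already developed: it is essentially a formal corollary of the general split-generation package \cref{thmGen} applied to $\LL$, combined with the structural results \cref{propCategoryDescriptions,propAsmooth,propAHH}. The key observation is that the hypothesis of \cref{thmGen}---smoothness of $\calA = \CF^*(\LL, \LL)$---has been established in \cref{propAsmooth}; this is really the substantive input, resting in turn on Dyckerhoff's smoothness result for the matrix factorisation category at an isolated critical point, transported through the quasi-isomorphisms $\calA \xrightarrow{\Phi} \calB \to \calB_\calL$ of \cite{SmithSuperfiltered} and \cref{lemBqis}.

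First I would invoke \cref{thmGen} with $T = \LL$. Its conclusions \ref{genOC}--\ref{genSplitGen} immediately supply: the orthogonal $\ZZ/2$-graded algebra decomposition $\QH^*(X) = Q \times Q^\perp = e \qprod \QH^*(X) \times (1_X - e) \qprod \QH^*(X)$ arising from an even-degree idempotent $e$; the fact that $\CO_{\LL}$ has kernel exactly $Q^\perp$ and restricts to an algebra isomorphism $Q \to \HH^*(\calA)$; and the fact that $\LL$ lies in $\Fuk(X)_{\lambda, Q}$ and split-generates it. I would then post-compose with the ring isomorphism $\HH^*(\calA) \cong \Jac_\calL W_L$ from \cref{propAHH} to obtain the stated identification $Q \cong \Jac_\calL W_L$ of $\ZZ/2$-graded $\kk$-algebras, with everything concentrated in even degree.

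For the categorical statement, I would observe that the split-generation conclusion just obtained identifies $\Fuk(X)_{\lambda, Q}$ with the full subcategory of $\Fuk(X)_\lambda$ split-generated by $\LL$, so \cref{propCategoryDescriptions} yields the quasi-equivalence $\Fuk(X)_{\lambda, Q} \simeq \mf(\hatS, W_L - \lambda)$ sending $\LL$ to $\hatE$. I do not foresee any real obstacle: the heavy lifting---proving $\calA$ is smooth via the superfiltered deformation machinery and establishing the abstract criterion \cref{thmGen}---has already been carried out. The only thing to check is that the $Q$ produced by \cref{thmGen} (as the image of $\OC_\calA$) and the $Q$ appearing in the statement of \cref{thmSingle} coincide, but this is immediate from the definitions. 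So the proof amounts to a careful packaging of the earlier results with no additional geometric or algebraic content.
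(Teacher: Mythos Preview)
Your proposal is correct and matches the paper's own approach exactly: the paper simply states that the theorem follows by plugging \cref{propCategoryDescriptions,propAsmooth} (and implicitly \cref{propAHH}) into \cref{thmGen}, which is precisely the assembly you describe. There is no additional content beyond this packaging.
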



\subsection{The structure of toroidal subcategories}
\label{sscToroidal}

Before moving on from the case of a single critical point, we make a slight detour to prove two structural results about toroidal subcategories of the Fukaya category.  Recall from \cref{defToroidal} that these are summands split-generated by a monotone torus equipped with an isolated critical point of its superpotential.  The reader only interested in quantum cohomology may skip to \cref{sscCombining}.

\begin{proposition}
\label{propSegal}
Given a toroidal subcategory $\calC$, any non-zero object in $\calC$ split-generates it.
\end{proposition}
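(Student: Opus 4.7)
The plan is to pass to the matrix factorization side via \cref{thmSingle}. Writing $\calC$ as split-generated by some toroidal generator $\LL = (L, \calL)$ with critical value $\lambda = W_L(\calL)$, that theorem gives a quasi-equivalence $\calC \simeq \mf(\hatS, W_L - \lambda)$ sending $\LL$ to the stabilized skyscraper $\hatE$ at the closed point of $\Spec \hatS$. Since $\hatE$ already split-generates, the task reduces to showing that $\hatE$ lies in the thick subcategory $\langle E \rangle$ generated by any non-zero $E \in \mf(\hatS, W_L - \lambda)$. Because $\hatS$ is complete local Noetherian and $W_L - \lambda$ has isolated singularity at the maximal ideal, the morphism spaces in $\mf(\hatS, W_L - \lambda)$ are finite-dimensional over $\kk$, so the category is Krull--Schmidt and I may assume $E$ is indecomposable.

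For the main step I would invoke Buchweitz's equivalence, which identifies $\mf(\hatS, W_L - \lambda)$ as a triangulated category with the stable category of maximal Cohen--Macaulay modules over $\hatS/(W_L - \lambda)$. This category carries an Auslander--Reiten quiver, which in the complete local isolated-singularity setting is connected. Since each AR triangle exhibits an indecomposable as the third vertex of a distinguished triangle involving its AR neighbours, connectivity of the quiver propagates: the thick subcategory generated by any one indecomposable contains every other indecomposable. Applied to $E$ and $\hatE$ this yields $\hatE \in \langle E \rangle$, as required.

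The principal obstacle will be locating a connectivity statement for the AR quiver in the precise generality I need, and verifying that the resulting triangulated conclusion lifts through the $A_\infty$-enhancement used throughout the paper. If a direct reference proves elusive, an alternative route is to exploit that $\HH^*(\calC) \cong \Jac_\calL W_L$ (\cref{propAHH}) is a local Artinian $\kk$-algebra, so every non-zero object of $\calC$ is ``supported at the single closed point'' of $\Spec \HH^*(\calC)$. Combined with smoothness of $\calA$ (\cref{propAsmooth}), the finite-dimensionality of $\End^*(E)$, and the split-generation by $\hatE$, one should be able to extract $\hatE$ as a summand by an iterated cone construction starting from $E$, which is forced to terminate in finitely many steps because $\End^*(E)$ has finite length. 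The heart of the difficulty in either approach is the same: turning the abstract fact that $\hom^*(E, \hatE) \neq 0$ (forced because $\hatE$ split-generates and $E$ is non-zero) into a geometric splitting of $\hatE$ off an iterated cone built from $E$.
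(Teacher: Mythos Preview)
Your reduction to $\mf(\hatS, W_L - \lambda)$ via \cref{thmSingle} matches the paper, but from there your proposal diverges and neither of your two routes is actually complete. The AR-quiver approach hinges on connectivity of the stable AR quiver for an arbitrary isolated hypersurface singularity, and you rightly flag that you do not have a reference; in fact connectivity is a delicate question (Auslander's theorem gives it only under finite CM type, which you cannot assume here), so this is a genuine gap rather than a citation to be filled in. Your alternative route via the local Artinian structure of $\HH^*(\calC)$ never gets past the observation $\hom^*(E, \hatE) \neq 0$, and you yourself identify the missing step of producing $\hatE$ as a summand of something built from $E$.

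The paper's argument bypasses all of this with a single functorial trick. Let $i : \Spec(\hatS/\m) \hookrightarrow \Spec \hatS$ be the inclusion of the closed point, and for a non-zero object $\calG$ consider $i_*i^*\calG$. On one hand $i^*\calG$ is a two-periodic complex of $\kk$-vector spaces, hence a direct sum of shifts of $\hatS/\m$, so $i_*i^*\calG$ is a direct sum of shifts of $\hatE$ and therefore split-generates $\hatE$ (it is non-zero because $\calG$ is). On the other hand $i_*i^*\calG \simeq \calG \otimes_{\hatS}^{\mathbb{L}} (\hatS/\m)$ is the tensor product of $\calG$ with the Koszul resolution of the residue field, hence a twisted complex built from shifted copies of $\calG$, so $\calG$ split-generates $i_*i^*\calG$. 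Chaining these together gives $\hatE \in \langle \calG \rangle$, and since $\hatE$ already split-generates by Dyckerhoff, you are done. This is exactly the ``iterated cone from $E$ producing $\hatE$ as a summand'' that you were looking for in your second approach---the Koszul complex is the missing construction.
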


\begin{proof}
By \cref{thmSingle} the category $\calC$ is quasi-equivalent to a category of the form $\mf(\hatS, W_L - \lambda)$, continuing the notation of the previous subsection.  We may thus work in the latter category instead.

So let $\calG$ be an arbitrary non-zero object in $\mf(\hatS, W_L - \lambda)$, and let
\begin{gather*}
i^* : \mf(\hatS, W_L - \lambda) \to \mf(\hatS / \m, 0)
\\ i_* : \mf(\hatS/\m, 0) \to \mf(\hatS, W_L - \lambda)
\end{gather*}
be the pullback (restriction) and pushforward (inclusion) functors associated to the inclusion of the closed point $\calL$ in $\Spec \hatS$.  Consider then the object $i_*i^* \calG$.  On the one hand, $i^*\calG$ can only be a direct sum of shifts of $\hatS/\m$, so $i_*i^*\calG$ is a direct sum of shifts of $\hatE$.  We deduce that $i_*i^*\calG$ split-generates $\hatE$.  On the other hand, $i_*i^*\calG$ is the (derived) tensor product of $\calG$ with the Koszul resolution of the skyscraper sheaf at $\calL$.  This in turn is a twisted complex built from $\calG$, so we deduce that $\calG$ split-generates $i_*i^*\calG$.

Combining these two deductions we see that $\calG$ split-generates $\hatE$.  We know already from \cite[Corollary 5.3]{Dyckerhoff} that $\hatE$ split-generates $\mf(\hatS, W_L - \lambda)$, so we conclude that $\calG$ also split-generates the category, which is what we want.
\end{proof}

I am grateful to Ed Segal for suggesting this argument.

Recall, again from \cref{defToroidal}, that a toroidal generator for the toroidal category $\calC$ is any object of the form $\LL$ (i.e.~a monotone torus equipped with an isolated critical point of its superpotential) that split-generates $\calC$.  Equivalently, by \cref{propSegal}, it is any \emph{non-zero} object in $\calC$ of the form $\LL$.  We conjecture that any two toroidal generators are quasi-isomorphic, up to a shift, and can prove the following partial result.

\begin{proposition}
\label{propToriIsomc}
If $T_1$ and $T_2$ are toroidal generators of a toroidal subcategory $\calC$ then we have a quasi-isomorphism
\[
T_1^{\oplus 2^{n-1}} \oplus T_1[1]^{\oplus 2^{n-1}} \simeq T_2^{\oplus 2^{n-1}} \oplus T_2[1]^{\oplus 2^{n-1}}.
\]
\end{proposition}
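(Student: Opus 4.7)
The plan is to work inside the matrix-factorization model of the toroidal subcategory provided by \cref{propCategoryDescriptions} and to adapt the Koszul-complex argument used in the proof of \cref{propSegal}. Taking $T_1$ as the reference toroidal generator, apply \cref{propCategoryDescriptions} to identify the toroidal subcategory with $\oPi \mf(\hatS, W_L - \lambda)$, with $T_1 \mapsto \hatE$, and let $\calM$ be the image of $T_2$; by \cref{propSegal} this is a non-zero split-generator. Let $i : \{\calL\} \hookrightarrow \Spec \hatS$ denote the closed-point inclusion, and compute the object $i_* i^* \calM$ in two different ways.

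On the one hand, $i^*\calM$ is a $\ZZ/2$-graded complex of $\kk$-vector spaces, hence quasi-isomorphic to $\kk^{\oplus a} \oplus \kk[1]^{\oplus b}$ for some $a, b \ge 0$, so pushing forward gives $i_* i^* \calM \simeq T_1^{\oplus a} \oplus T_1[1]^{\oplus b}$. On the other hand, $i_* i^* \calM$ is modelled by the derived tensor product $\calM \otimes^L_{\hatS} K^\bullet$ with the Koszul resolution $K^\bullet$ of $\hatS/\m$; after folding the homological $\ZZ$-grading of $K^\bullet$ into the $\ZZ/2$-grading of the matrix-factorization category, the underlying graded $\hatS$-module of $K^\bullet$ has $\sum_{k\text{ even}}\binom{n}{k} = 2^{n-1}$ copies of $\hatS$ in each parity. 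Granting that the resulting twisted complex splits in the homotopy category, we obtain $i_*i^*\calM \simeq T_2^{\oplus 2^{n-1}} \oplus T_2[1]^{\oplus 2^{n-1}}$, and hence
\[
T_1^{\oplus a} \oplus T_1[1]^{\oplus b} \simeq T_2^{\oplus 2^{n-1}} \oplus T_2[1]^{\oplus 2^{n-1}}.
\]
Running the symmetric computation in the matrix-factorization model attached to $T_2$ produces a mirror identity $T_2^{\oplus a'} \oplus T_2[1]^{\oplus b'} \simeq T_1^{\oplus 2^{n-1}} \oplus T_1[1]^{\oplus 2^{n-1}}$. Both right-hand sides are invariant under the shift functor $[1]$, so $a = b$ and $a' = b'$; a dimension count using $\dim_\kk \End^*(T_i) = 2^n$---which reflects the exterior-algebra form of the endomorphisms of the skyscraper factorization---then pins down $a = a' = 2^{n-1}$, yielding the claimed quasi-isomorphism.

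The main obstacle I expect is the splitting of the Koszul-tensor twisted complex: its differentials come from multiplication by generators of $\m$, which need not act null-homotopically on an arbitrary split-generator $\calM$, so the twisted complex is not automatically quasi-isomorphic to its underlying graded object. For $\calM = \hatE$ the splitting is immediate from the first description because $i^*\hatE$ already has zero differential, and the challenge is to transfer this to arbitrary split-generators $\calM$---presumably by exploiting the Calabi--Yau / Koszul-duality structure shared by all toroidal generators, via the fact that $\End^*(\calM)$ is derived Morita-equivalent to $\End^*(\hatE)$. This is also what causes only the $2^{n-1}$-multiplied statement, rather than the stronger conjecture $T_1 \simeq T_2$ up to shift, to emerge from the approach: tensoring with the Koszul complex inevitably introduces a factor of $2^{n-1}$ in each parity.
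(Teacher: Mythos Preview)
Your overall architecture matches the paper's: work in the matrix-factorization model with $T_1 \mapsto \hatE$ and $T_2 \mapsto \calM$, and compute $i_*i^*\calM$ two ways---once via $i^*$ (giving copies of $T_1$) and once via the Koszul resolution (giving copies of $T_2$). You have also correctly isolated the crux: the Koszul-tensor twisted complex splits precisely when multiplication by each generator of $\m$ is null-homotopic on $\calM$, and this is not automatic for an arbitrary object.

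Where your proposal falls short is in the resolution of that gap. The paper does not use Calabi--Yau or Koszul-duality structure; instead it makes the question \emph{model-independent} and then exploits the symmetry between $T_1$ and $T_2$. The key observation (from Dyckerhoff) is that for any object $O$, the scalar-multiplication map $\hatS \to \End^*(O)$ factors through the length-zero projection $\HH^*(\calC) \to \End^*(O)$. Since $\HH^*(\calC) \cong \Jac_\calL W_L$ is local with maximal ideal the image of $\m$, the claim ``$\m$ acts null-homotopically on $\calM$'' becomes ``the maximal ideal of $\HH^*(\calC)$ lies in the kernel of $\HH^*(\calC) \to \End^*(T_2)$''. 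This statement is intrinsic to $\calC$ and the object $T_2$, independent of which matrix-factorization model one uses. Now invoke symmetry: $T_2$ is itself a toroidal generator, so in the model built from $T_2$ it corresponds to a skyscraper factorization, on which $\m$ visibly acts by zero. That settles the claim. Your phrase ``derived Morita-equivalent'' is gesturing at the right Morita-invariance, but the decisive point is that both $T_i$ are toroidal generators, not merely split-generators.

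Two smaller remarks. First, the paper obtains $a = b$ directly: $i^*\calM$ is a $2$-periodic complex over $\kk$ with terms of equal rank, hence Euler characteristic zero, so its cohomology has equal dimension in each parity. Your route via shift-invariance of the other side plus the symmetric computation works, but is a detour. Second, once $a = b$, the dimension count needs only the single identity $\dim_\kk \End^*(i_*i^*\calM)$ computed both ways, giving $2^{2n-2}\cdot 2^n = a^2 \cdot 2^n$ and hence $a = 2^{n-1}$; running the mirror computation is unnecessary.
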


\begin{proof}
Following the notation of \cref{propSegal}, we may assume that $T_1$ is $\LL$ and hence corresponds to $\hatE$ in $\mf(\hatS, W_L - \lambda)$.  We abbreviate $T_2$ to $T$ and write $\calT$ for the corresponding matrix factorisation.

We claim that for every element $x$ of the ideal $\m$, the endomorphism of $\calT$ given by scalar multiplication by $x$ is nullhomotopic (i.e.~exact).  Assuming this for now, consider the object $i_*i^*\calT$, mimicking the proof of \cref{propSegal}.

First view $i_*i^*\calT$ as the tensor product of $\calT$ with the Koszul resolution of the skyscraper sheaf at $\calL$.  This Koszul resolution looks like $2^{n-1}$ copies of $\hatS$ and $2^{n-1}$ copies of $\hatS[1]$, connected by maps which are each given by scalar multiplication by an element of $\m$.  By our claim we can thus represent $i_*i^*\calT$ by $2^{n-1}$ copies of $\calT$ and $2^{n-1}$ copies of $\calT[1]$, connected by nullhomotopic maps.  We deduce that
\begin{equation}
\label{eqTT}
i_*i^*\calT \simeq \calT^{\oplus 2^{n-1}} \oplus \calT[1]^{\oplus 2^{n-1}}.
\end{equation}

Now instead use the fact that $i^*\calT$ is a sum of copies of $\hatS/\m$ and $\hatS/\m[1]$.  More precisely, suppose
\[
\calT = \cdots \xto{f^1} \hatS^{\oplus k} \xto{f^0} \hatS^{\oplus k} \xto{f^1} \hatS^{\oplus k} \xto{f^0} \cdots,
\]
where the $f^i$ are $\hatS$-linear maps satisfying $f^{i+1} \circ f^{i} = W_L \cdot \id_{\hatS^k}$.  We then get
\begin{equation}
\label{eqTS}
i^*\calT = \cdots \xto{\overline{f}^1} (\hatS/\m)^{\oplus k} \xto{\overline{f}^0} (\hatS/\m)^{\oplus k} \xto{\overline{f}^1} (\hatS/\m)^{\oplus k} \xto{\overline{f}^0} \cdots \simeq (\hatS/\m)^{\oplus l} \oplus (\hatS/\m)[1]^{\oplus l},
\end{equation}
where $\overline{f}^i$ denotes the reduction of $f^i$ modulo $\m$ and $l$ denotes
\[
\dim_\kk (\ker f^1 / \operatorname{im} f^0) = \dim_\kk (\ker f^0 / \operatorname{im} f^1).
\]
Applying $i_*$ to \eqref{eqTS} and using the fact that $i_*(\hatS/\m) \simeq \hatE$ then gives
\begin{equation}
\label{eqTE}
i_*i^* \calT \simeq \hatE^{\oplus k} \oplus \hatE[1]^{\oplus k}.
\end{equation}

The result now follows from \eqref{eqTT} and \eqref{eqTE} if we can show $k = 2^{n-1}$.  To do this, consider the endomorphism algebra of $i_*i^*\calT$.  Computing its dimension using \eqref{eqTT} and \eqref{eqTE} gives
\[
2^{2n-2} \dim_\kk \End^*(\hatE) = k^2 \dim_\kk \End^*(\calT).
\]
Since $\End^*(\calT) \cong \HF^*(T, T)$ and $\End^*(\hatE) \cong \HF^*(\LL, \LL)$, both of which are $2^n$-dimensional, we deduce that $k = 2^{n-1}$, as needed.

It remains to prove the claim, namely that each element of $\m$ acts nullhomotopically on $\calT$.  To do this, recall from \cref{propAHH} and its proof that $\HH^*(\calC)$ is the localisation (or, equivalently, completion) $\Jac_\calL W_L$ of $\Jac W_L$ at $\calL$.  Moreover, we know $\HH^*(\calC)$ acts on each object $O$ of $\calC$ by projection to length zero $\pi : \HH^*(\calC) \to \End^*(O)$, and it follows from \cite[p266]{Dyckerhoff} that the corresponding action of $\Jac_\calL W_L$ on $\calC$ is by scalar multiplication.  Explicitly, the following diagram commutes
\[
\begin{tikzcd}[column sep=5em, row sep=2.5em]
\hatS \arrow[r, "\text{scalar mult}"] \arrow[d, "J"] & \End^*(O) \\
\Jac_\calL W_L \arrow[r, phantom, "\cong"] & \HH^*(\calC), \arrow[u, swap, "\pi"]
\end{tikzcd}
\]
where $J$ is reduction modulo the Jacobian ideal (so in particular all elements of this ideal act nullhomotopically on every $O$).  We can therefore rephrase our claim as follows: when $O = \calT$, the unique maximal ideal in $\HH^*(\calC)$ is in the kernel of $\pi$.  Since $T_1$ and $T_2$, or equivalently $\hatE$ and $\calT$, were both equally defined to be toroidal generators of $\calC$, by symmetry it suffices to prove this rephrased claim for $O = \hatE$.  And in this case the claim can be proved directly, either by viewing $\hatE$ as the skyscraper sheaf at $\calL$ in the singularity category of $W_L - \lambda$, or by the computation in \cite[Section 5]{SmithSuperfiltered}, where reduction modulo $\m$ is identified with projection to length zero.
\end{proof}


\subsection{Combining isolated critical points}
\label{sscCombining}

Returning to the main thread, our next task is to show that the splittings of $\QH^*(X)$ arising from different critical points of $W_L$ are compatible with each other, and that the corresponding objects do not interact.

Suppose then that the isolated critical points of $W_L$ are $\calL_1, \calL_2, \dots, \calL_r$.  Let $\lambda_i = W_L(\calL_i)$, and let $\LL_i$ be the object $(L, \calL_i)$ in $\Fuk(X)_{\lambda_i}$.  By \cref{thmSingle}, for each $i$ we get a splitting
\[
\QH^*(X) = Q_i \times Q_i^\perp = e_i \QH^*(X) \times (1_X-e_i) \QH^*(X).
\]

\begin{proposition}
\label{propIdempotentsOrthogonal}
The idempotents $e_i$ are pairwise algebraically orthogonal, i.e.~$e_a e_b = 0$ for all $a$ and $b$ with $a \neq b$.  (This implies, by the Frobenius algebra property, that the $Q_i$ are pairwise geometrically orthogonal with respect to the Poincar\'e pairing.)
\end{proposition}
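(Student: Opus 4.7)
My plan is to split into two cases according to whether the critical values $\lambda_a$ and $\lambda_b$ coincide.

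In the case $\lambda_a \neq \lambda_b$, I would invoke \cref{propEspace}, which applies because the torus $L$ is orientable. It places $Q_i$ inside the generalised $\lambda_i$-eigenspace $\QH^*(X)_{\lambda_i}$ of $c_1\qprod$. Since generalised eigenspaces for distinct eigenvalues meet only in $0$, we have $Q_a \cap Q_b = 0$. Both $Q_a$ and $Q_b$ are two-sided ideals by \cref{thmGen}\ref{genAlgDecomp}, so $e_a e_b$ belongs to $Q_a \cap Q_b$ and hence vanishes.

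In the case $\lambda_a = \lambda_b =: \lambda$, the objects $\LL_a$ and $\LL_b$ live in the single category $\Fuk(X)_\lambda$, and I would examine the idempotent $\CO_{\LL_b}(e_a) \in \HH^*(\calA_b)$. By \cref{propAHH} together with \cref{lemIsolatedTFAE} the target ring $\HH^*(\calA_b) \cong \Jac_{\calL_b} W_L$ is Artinian local, so its only idempotents are $0$ and $1$. If $\CO_{\LL_b}(e_a) = 0$ then $e_a$ lies in $\ker \CO_{\LL_b} = Q_b^\perp$ by \cref{thmSingle}, and the decomposition $\QH^*(X) = Q_b \times Q_b^\perp$ immediately yields $e_a e_b = 0$.

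The remaining possibility $\CO_{\LL_b}(e_a) = 1$ I would rule out by contradiction. Projecting to length-zero cochains gives $\CO^0_{\LL_b}(e_a) = 1_{\LL_b}$, so $\LL_b \in \Fuk(X)_{\lambda, Q_a}$ and hence, by \cref{thmSingle}\ref{genSplitGen}, $\LL_b$ belongs to $\PiTw \calA_a$. Under the quasi-equivalence $\PiTw \calA_a \simeq \mf(\hatS, W_L - \lambda)$ of \cref{propCategoryDescriptions}, obtained by composing the Cho--Hong--Lau functor $\LMF$ with the $\m_a$-adic completion (here $\hatS$ is the completion of $S$ at $\calL_a$), the object $\LL_b = (L, \calL_b)$ is sent to the completion at $\calL_a$ of the skyscraper sheaf at $\calL_b \in \Spec S$. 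Since $\calL_a \neq \calL_b$ this completion vanishes, so $\LL_b$ corresponds to the zero object in $\PiTw \calA_a$. As the inclusion $\PiTw \calA_a \hookrightarrow \Fuk(X)_\lambda$ is cohomologically full and faithful, this would force $\HF^*(\LL_b, \LL_b) = 0$, contradicting the fact that $\calL_b$ is a critical point (so this Floer cohomology contains at least the unit). The main obstacle is ensuring that the image of $\LL_b$ under the equivalence of \cref{propCategoryDescriptions} really is the completion of the skyscraper at $\calL_b$; this relies on $\LMF$ being defined uniformly on $(L, \calL_i)$ across $i$ and on its compatibility with the completion step.
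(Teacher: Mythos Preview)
Your treatment of the case $\lambda_a \neq \lambda_b$ is correct and matches the paper's: the paper writes out the B\'ezout argument explicitly, but it amounts to the same observation that $Q_a$ and $Q_b$ lie in distinct generalised eigenspaces of $c_1\qprod$, hence $e_a e_b \in Q_a \cap Q_b = 0$.

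In the case $\lambda_a = \lambda_b$ your argument has a genuine gap, and it is precisely the one you flag as the ``main obstacle''.  The quasi-equivalence of \cref{propCategoryDescriptions} is built from the quasi-isomorphism $\Phi : \calA_a \to \calB_a$, which is the localised mirror functor \emph{restricted to the single object $\LL_a$}; the paper never analyses $\LMF$ on $\LL_b$, and there is no result in the paper identifying $\LMF(\LL_b)$ with the skyscraper at $\calL_b$ (nor any guarantee that the abstract image of $\LL_b$ in $\PiTw\calA_a$ under the equivalence agrees with $\LMF(\LL_b)$ after completion).  So the claim that $\LL_b$ corresponds to the zero object in $\mf(\hatS, W_L - \lambda)$ is unjustified as stated, and your contradiction in the sub-case $\CO_{\LL_b}(e_a)=1$ is not established.

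The paper avoids this entirely by a much more elementary route: it observes that $\HF^*(\LL_a,\LL_b) = 0$ directly, via the Oh spectral sequence whose $E_1$ page $\rH^*(L;\calL_a^{-1}\otimes_\kk \calL_b)$ vanishes because $\calL_a \neq \calL_b$.  From this categorical orthogonality, the images of $\HH_*(\calA_a)$ and $\HH_*(\calA_b)$ in $\HH_*(\Fuk(X)_\lambda)$ are Shklyarov-orthogonal, hence by \cref{propOCisometry} the subspaces $Q_a$ and $Q_b$ are Poincar\'e-orthogonal, and then $e_ae_b \in Q_a \cap Q_b$ forces $e_ae_b = 0$ by non-degeneracy of the pairing on $Q_b$.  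Note that this same vanishing $\HF^*(\LL_a,\LL_b)=0$ would also short-circuit your argument: once you have it, any $\LL_b$ split-generated by $\LL_a$ is automatically zero, giving your contradiction without any appeal to matrix factorisations.
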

\begin{proof}
Fix distinct $a$ and $b$.  First suppose $\lambda_a \neq \lambda_b$.  By \cref{thmGen}\ref{genEspace} there exist positive integers $m_a$ and $m_b$ such that $(c_1 - \lambda_a)^{m_a} e_a = 0$ and $(c_1 - \lambda_b)^{m_b} e_b = 0$.  Since the polynomials $(T-\lambda_a)^{m_a}$ and $(T-\lambda_b)^{m_b}$ are coprime there exist polynomials $f(T)$ and $g(T)$ such that
\[
1 = f(T)(T-\lambda_a)^{m_a} + g(T)(T-\lambda_b)^{m_b}.
\]
Plugging in $T = c_1$, and multiplying both sides by $e_a$ and $e_b$ (which have even degree so commute with everything), we get
\[
e_a e_b = f(c_1) \big((c_1 -\lambda_a)^{m_a} e_a\big) e_b + g(c_1) \big((c_1 - \lambda_b)^{m_b} e_b\big) e_a = 0,
\]
as wanted.

It remains to consider the case where $\lambda_a$ and $\lambda_b$ are equal to some common value $\lambda$.  In this case $\LL_a$ and $\LL_b$ both lie in $\Fuk(X)_\lambda$, and it is well known that they are orthogonal in the sense that $\HF^*(\LL_a, \LL_b) = 0$.  This can be proved by considering the Oh spectral sequence \cite{OhSS}
\[
E_1 = \rH^*(L; \calL_a^{-1} \otimes_\kk \calL_b) \implies \HF^*(\LL_a, \LL_b),
\]
and noting that the cohomology group $\rH^*(L; \calL_a^{-1} \otimes_\kk \calL_b)$ with local coefficients $\calL_a^{-1} \otimes_\kk \calL_b$ vanishes.  The full subcategories $\calA_a$ and $\calA_b$ on $\LL_a$ and $\LL_b$ are therefore categorically orthogonal, so by the discussion in \cref{sscShklyarov} the images of $\HH_*(\calA_a)$ and $\HH_*(\calA_b)$ in $\HH_*(\Fuk(X)_\lambda)$ are orthogonal with respect to the Shklyarov pairing.  By \cref{propOCisometry} we then deduce that the $Q_a$ and $Q_b$ are orthogonal with respect to the Poincar\'e pairing.

Now, $e_a e_b$ lies in $e_a \QH^*(X) = Q_a$, so it must be orthogonal to every element of $Q_b$.  But it also lies in $e_b \QH^*(X) = Q_b$, on which the Poincar\'e pairing is non-degenerate.  We conclude that $e_a e_b = 0$.
\end{proof}

Combining this with \cref{thmSingle}, and letting $\calA_i = \CF^*(\LL_i, \LL_i)$, we get the following.

\begin{corollary}
\label{thmSeveral}
$\QH^*(X)$ decomposes as a product of $\ZZ/2$-graded algebras
\[
Q_1 \times \dots \times Q_r \times \bigcap_{i=1}^r Q_i^\perp = e_1 \QH^*(X) \times \dots \times e_r \QH^*(X) \times \Big(\prod_{i=1}^r (1_X-e_i)\Big) \QH^*(X).
\]
Each map
\[
\CO_{\LL_i} : \QH^*(X) \to \HH^*(\calA_i) \cong \Jac_{\calL_i} W_L
\]
induces an isomorphism from $Q_i = e_i \QH^*(X)$ and annihilates the other factors.  Consequently we have
\[
Q_1 \times \dots \times Q_r \cong \Jac_{\calL_1} W_L \times \dots \times \Jac_{\calL_r} W_L \cong \Jacisol W_L.
\]
Each summand $\Fuk(X)_{\lambda_i, Q_i}$ of the Fukaya category is described in \cref{thmSingle}; in particular, it contains and is split-generated by $\LL_i$.\hfill$\qed$
\end{corollary}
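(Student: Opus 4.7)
The proof is essentially a bookkeeping assembly of the two preceding results: \cref{thmSingle}, which handles each isolated critical point individually, and \cref{propIdempotentsOrthogonal}, which says the resulting idempotents $e_1, \dots, e_r$ are pairwise orthogonal. The strategy is to use the orthogonality to merge the $r$ binary splittings into a single $(r+1)$-fold product decomposition, then read off the behaviour of the $\CO_{\LL_i}$ and identify the last factor.

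First I would form the summed idempotent $e = e_1 + \dots + e_r$. Pairwise orthogonality gives $e^2 = \sum_i e_i^2 = \sum_i e_i = e$, and $e$ has even degree since each $e_i$ does. The products $e e_i = e_i$ and $e_i e_j = 0$ (for $i \neq j$) show that left multiplication by the $e_i$ defines mutually orthogonal projections onto the $Q_i$ whose sum is projection onto $e \QH^*(X)$, so as a $\ZZ/2$-graded algebra we have
\[
e \QH^*(X) = Q_1 \times \dots \times Q_r.
\]
Next I would identify the complementary factor. Expanding $\prod_i (1_X - e_i)$ and collapsing all cross-terms using $e_i e_j = 0$ leaves $1_X - e$, so
\[
\Big( \prod_{i=1}^r (1_X - e_i) \Big) \QH^*(X) = (1_X - e) \QH^*(X) = \bigcap_{i=1}^r (1_X - e_i) \QH^*(X) = \bigcap_{i=1}^r Q_i^\perp,
\]
where the second equality is the standard fact that commuting idempotents cut out the intersection of the corresponding images. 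Combining these two displays yields the asserted product decomposition of $\QH^*(X)$.

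For the behaviour of $\CO_{\LL_i}$, I would note that by \cref{thmSingle} this map restricts to an isomorphism $Q_i \to \HH^*(\calA_i) \cong \Jac_{\calL_i} W_L$ and has kernel $Q_i^\perp$. For $j \neq i$, orthogonality gives $e_j \in (1_X - e_i) \QH^*(X) = Q_i^\perp$, hence $Q_j = e_j \QH^*(X) \subset Q_i^\perp = \ker \CO_{\LL_i}$; likewise $\bigcap_k Q_k^\perp \subset Q_i^\perp$ is annihilated. So $\CO_{\LL_i}$ kills every factor except $Q_i$, on which it is an isomorphism onto $\Jac_{\calL_i} W_L$. Multiplying these isomorphisms together gives
\[
Q_1 \times \dots \times Q_r \cong \Jac_{\calL_1} W_L \times \dots \times \Jac_{\calL_r} W_L,
\]
and the final identification with $\Jacisol W_L$ is then just \cref{defSisol}. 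The description of each summand $\Fuk(X)_{\lambda_i, Q_i}$ is inherited verbatim from \cref{thmSingle}, so there is nothing further to prove.

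There is no real obstacle here; the only mild subtlety is making sure to invoke that the $e_i$ (and hence also $e$ and $1_X - e$) have even degree so they commute with everything, which is why the sums and products of the idempotents behave as in ordinary commutative algebra despite the $\ZZ/2$-grading. Everything else is a direct consequence of the two inputs.
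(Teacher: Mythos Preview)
Your proposal is correct and matches the paper's approach exactly: the paper states this corollary with a bare \qed, treating it as an immediate consequence of \cref{thmSingle} and \cref{propIdempotentsOrthogonal}, and your argument is precisely the bookkeeping one would write out to justify this.
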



\subsection{Constructing the map geometrically}
\label{sscCOL}

\Cref{thmSeveral} tells us that the map
\begin{equation}
\label{eqProductCO}
\CO_{\LL_1} \times \dots \times \CO_{\LL_r} : \QH^*(X) \to \HH^*(\calA_1) \times \dots \times \HH^*(\calA_r)
\end{equation}
is surjective and induces an algebra decomposition of $\QH^*(X)$ into the kernel times the image.  We also know that the image is isomorphic to $\Jacisol W_L$, although this isomorphism is somewhat indirect.  Our next aim is to give a more geometric description of this map \eqref{eqProductCO}, which goes directly to $\Jacisol W_L$ and avoids any categorical $A_\infty$-algebraic constructions, and hence give a more concrete realisation of this algebra decomposition.

In \cite{SmithHHviaHF} we consider the Floer cohomology $\HF^*_S(\bL, \bL)$ of $L$ with coefficients in $S = \kk[\rH_1(L; \ZZ)]$.  Explicitly, this means that we work over $S$ and weight the contribution of each pseudoholomorphic disc $u$ by the monomial in $S$ corresponding to its boundary homology class, as discussed in \cref{sscStatement}.  There is similarly a unital $\kk$-algebra homomorphism
\[
\CO^0_{\bL} : \QH^*(X) \to \HF^*_S(\bL, \bL),
\]
also described in \cref{sscStatement}.

\begin{remark}
\label{opposite}
The map $\CO^0_\bL$ always lands in the graded-centre of $\HF^*_S(\bL, \bL)$, so one could equally take its codomain to be the graded-opposite algebra $\HF^*_S(\bL, \bL)^\op$.  This will be useful later.  We shall really only need this map after localising at isolated critical points, which will make the codomain graded-commutative anyway.
\end{remark}

Recall from \cref{defSisol} that $S_\isol$ is the localisation of $S$ at the set $\{\calL_1, \dots, \calL_r\}$ of isolated critical points of $W_L$.  Concretely this is just the ring obtained from $S$ by inverting every $f \in S$ that satisfies $f(\calL_i) \neq 0$ for all $i$, and can be viewed as the product $S_{\calL_1} \times \dots \times S_{\calL_r}$ of the localisations at the individual $\calL_i$.

\begin{definition}
\label{defHFisol}
Let $\HF^*_\isol(\bL, \bL)$ and
\[
\COisol : \QH^*(X) \to \HF^*_\isol(\bL, \bL)
\]
denote the analogues of $\HF^*_S(\bL, \bL)$ and $\CO^0_\bL$ constructed with $S_\isol$ in place of $S$.  By \cref{lemCommAlg}\ref{itmLocalisation} we could equivalently define them by starting with $\HF^*_S(\bL, \bL)$ and $\CO^0_\bL$ and either localising at the isolated critical points or tensoring with $S_\isol$.  Similarly, for each $i$ let $\HF^*_{\calL_i}(\bL, \bL)$ and $\CO^0_{\bL, \calL_i}$ denote the analogues of $\HF^*_S(\bL, \bL)$ and $\CO^0_\bL$ constructed with $S_{\calL_i}$ in place of $S$, or equivalently their localisations at $\calL_i$ or tensor products with $S_{\calL_i}$.
\end{definition}

The technical heart of this subsection is the following pair of results, which we shall prove shortly.

\begin{lemma}
\label{lemJacisol}
As an $S_\isol$-algebra, $\HF^*_\isol(\bL, \bL)$ is generated by the unit and is isomorphic to $\Jacisol W_L$.
\end{lemma}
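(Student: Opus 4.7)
The plan is to split the statement into two parts, exploit the product decomposition $S_\isol = \prod_{i=1}^r S_{\calL_i}$ from \cref{defSisol}, and reduce each local case to a Koszul-type computation. The second part of the claim --- that the unit-generated $S_\isol$-subalgebra of $\HF^*_\isol(\bL,\bL)$ is isomorphic to $\Jacisol W_L$ --- follows immediately from the statement recalled in \cref{sscStatement} that the $S$-subalgebra of $\HF^*_S(\bL,\bL)$ generated by the unit is $\Jac W_L$, combined with exactness of localisation (\cref{lemCommAlg}\ref{itmLocalisation}). What needs more work is showing that this subalgebra exhausts all of $\HF^*_\isol(\bL,\bL)$, equivalently that $\HF^*_{\calL_i}(\bL,\bL) = \Jac_{\calL_i} W_L$ for each isolated critical point $\calL_i$.

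Here the plan is to recognise the relevant Floer chain complex as (up to higher-order corrections) a Koszul-type complex. By the analysis of the $A_\infty$-structure in \cite{SmithSuperfiltered}, the underlying $S$-module of $\CF^*_S(\bL,\bL)$ is $\Lambda^*\rH^1(L;\kk) \otimes_\kk S$, and its $\mu^1$-differential is built from the partial derivatives $\partial W_L / \partial z_j$, giving $\CF^*_S(\bL,\bL)$ the structure of a Koszul complex for the sequence $(\partial W_L/\partial z_j)_{j=1}^n$. After localising at the isolated critical point $\calL_i$, this sequence becomes regular in the regular local ring $S_{\calL_i}$: its ideal has height $n$ by \cref{lemIsolatedTFAE}\ref{itmKrullZero}, matching the number of generators, so regularity follows from Cohen--Macaulayness of $S_{\calL_i}$. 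The localised Koszul complex is therefore acyclic outside the single degree containing the unit, where its cohomology is precisely $\Jac_{\calL_i} W_L$, as wanted.

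The main technical hurdle is invoking \cite{SmithSuperfiltered} in the appropriate $S$-linear form, since the cited result is phrased for the specialised algebra $\calA_i = \CF^*(\LL_i,\LL_i)$ at a chosen critical point rather than for the unspecialised $S$-linear algebra underlying $\bL$. One must also verify that any higher-order corrections to $\mu^1$ coming from the superfiltered deformation structure preserve the Koszul character, so that the cohomological calculation is not disturbed. An alternative, somewhat heavier route would be to pass to the $\m_i$-adic completion, apply faithful flatness of $\hatS_i$ over $S_{\calL_i}$ together with \cref{lemBqis} to identify the completion of $\HF^*_{\calL_i}(\bL,\bL)$ with a suitable endomorphism cohomology on the matrix-factorisation side, and compute there; the Koszul approach above is more direct.
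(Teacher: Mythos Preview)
Your strategy matches the paper's: reduce to a single isolated critical point $\calL_i$, identify the relevant complex with the Koszul complex on $(\partial W_L/\partial z_1,\dots,\partial W_L/\partial z_n)$ over $S_{\calL_i}$, and use regularity of this sequence (which follows, as you say, from $S_{\calL_i}$ being regular local of dimension $n$ and $\Jac_{\calL_i}W_L$ being zero-dimensional). The regularity argument and the conclusion are essentially identical to the paper's.

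The gap is the step you yourself flag. The Floer differential on $\CF^*_S(\bL,\bL)$ is \emph{not} literally the Koszul differential: discs of Maslov index $>2$ contribute correction terms that lower cohomological degree by more than one. You assert that these corrections ``preserve the Koszul character'' but give no mechanism for this, and invoking \cite{SmithSuperfiltered} does not supply one (as you note, that reference treats the specialised algebra and does not directly compute $\HF^*_S(\bL,\bL)$). The paper resolves this cleanly by using the Oh spectral sequence: filtering the pearl complex by Morse index, the $E_1$ page is precisely $\rH^*(L;S_{\calL_i})\cong\Lambda^*\rH^1(L;S_{\calL_i})$ with differential $\diff_1$ computed geometrically from index-$2$ discs only. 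A direct calculation gives $\diff_1(z_j^{-1}b_j)=\partial W_L/\partial z_j$, so $(E_1,\diff_1)$ is exactly the Koszul complex. Since its cohomology is concentrated in a single filtration degree, the spectral sequence degenerates at $E_2$ and the higher-index corrections (which would appear as $\diff_2,\diff_3,\dots$) contribute nothing.

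In short, your filtered-complex intuition is correct, but the Oh spectral sequence is the device that makes it rigorous, and it also lets you compute the leading differential directly from the geometry rather than importing an $S$-linear version of \cite{SmithSuperfiltered}.
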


\begin{proposition}
\label{propphii}
For each $i$ there is an algebra isomorphism
\[
\phi_i : \HH^*(\calA_i) \to \HF^*_{\calL_i}(\bL, \bL),
\]
such that $\phi_i \circ \CO_{\LL_i} = \CO^0_{\bL, \calL_i}$.
\end{proposition}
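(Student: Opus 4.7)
The plan is to build $\phi_i$ from the Hochschild-to-Floer comparison machinery of \cite{SmithHHviaHF}, and then convert its codomain from a completion to the localisation we want. The underlying geometric picture is that the $S$-linear Floer algebra $\CF^*_S(\bL, \bL)$ is a deformation of $\calA_i = \CF^*(\LL_i, \LL_i)$ centred at the maximal ideal $\m_i \subset S$ corresponding to $\calL_i$: reducing $\CF^*_S(\bL, \bL)$ modulo $\m_i$ recovers $\calA_i$, and deformations of $\calA_i$ are controlled by $\HH^*(\calA_i)$. I expect \cite{SmithHHviaHF} to supply a natural unital $\kk$-algebra homomorphism
\[
\widetilde{\phi}_i : \HH^*(\calA_i) \to \widehat{\HF}{}^*_{\m_i}(\bL, \bL)
\]
into the $\m_i$-adic completion of the $S$-linear Floer cohomology, intertwining the full closed--open map $\CO_{\LL_i}$ with the $\m_i$-completed length-zero closed--open map.

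Granting this input, I would identify both sides of $\widetilde{\phi}_i$ with $\Jac_{\calL_i} W_L$: the domain by \cref{propAHH}, and the codomain by a local version of \cref{lemJacisol}. Under these identifications $\widetilde{\phi}_i$ becomes a unit-preserving $S_{\calL_i}$-algebra endomorphism of $\Jac_{\calL_i} W_L$, and since this algebra is generated by the unit over $S_{\calL_i}$ such an endomorphism is forced to be the identity, hence an isomorphism. To pass from the completion to the localisation, observe that because $\calL_i$ is an isolated critical point, \cref{lemIsolatedTFAE}\ref{itmFinDim} gives that $\Jac_{\calL_i} W_L$ is a finite-dimensional local $\kk$-algebra and is therefore automatically equal to its own $\m_i$-adic completion; combined with \cref{lemCommAlg}\ref{itmCompletion} this identifies $\widehat{\HF}{}^*_{\m_i}(\bL, \bL)$ with $\HF^*_{\calL_i}(\bL, \bL)$, upgrading $\widetilde{\phi}_i$ to the desired isomorphism $\phi_i$.

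The compatibility $\phi_i \circ \CO_{\LL_i} = \CO^0_{\bL, \calL_i}$ then reduces to the corresponding compatibility for $\widetilde{\phi}_i$, since $\CO^0_{\bL, \calL_i}$ is by construction the localisation of $\CO^0_\bL$ at $\calL_i$, and that localisation is detected by the $\m_i$-adic completion in our finite-dimensional setting. The main obstacle is the first step: pinning down from \cite{SmithHHviaHF} the precise statement yielding $\widetilde{\phi}_i$ together with its closed--open compatibility, and handling the mild opposite-algebra convention noted in \cref{opposite} (which is harmless here because $\Jac_{\calL_i} W_L$ is graded-commutative after localising at the isolated point).
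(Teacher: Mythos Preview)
Your overall architecture matches the paper's: invoke \cite{SmithHHviaHF} to relate $\HH^*(\calA_i)$ to the $\m_i$-adically completed $S$-linear Floer cohomology in a way compatible with the closed--open maps, and then use Artinian-ness of $\Jac_{\calL_i} W_L$ to identify the completion with the localisation. The handling of the opposite-algebra issue and of the completion-versus-localisation step is essentially what the paper does.

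There is, however, a genuine gap in your argument that $\widetilde{\phi}_i$ is an isomorphism. You begin with $\widetilde{\phi}_i$ as a unital \emph{$\kk$-algebra} homomorphism, but then assert that after identifying source and target with $\Jac_{\calL_i} W_L$ it becomes an \emph{$S_{\calL_i}$-algebra} endomorphism, forcing it to be the identity. The $S_{\calL_i}$-linearity is never justified: the domain $\HH^*(\calA_i)$ has no a priori $S$-module structure (the object $\calA_i$ is defined over $\kk$), and the identification with $\Jac_{\calL_i} W_L$ in \cref{propAHH} is as $\kk$-algebras. Without $S$-linearity the ``forced to be the identity'' argument fails---for instance, the composite $\Jac_{\calL_i} W_L \twoheadrightarrow \kk \hookrightarrow \Jac_{\calL_i} W_L$ is a unital $\kk$-algebra endomorphism which is not an isomorphism whenever the critical point is degenerate.

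The paper avoids this by not appealing to any abstract rigidity. What \cite{SmithHHviaHF} actually supplies (Theorem~B) is a commutative \emph{square}
\[
\begin{tikzcd}
\QH^*(X) \arrow{r}{\CO_{\LL_i}} \arrow{d}[swap]{\hatCO} & \HH^*(\calA_i) \arrow{d}{\rH(\hatPhi_*)} \\
\HF^*_{\hatS}(\bL,\bL)^\op \arrow{r}{\rH(\hatTheta)} & \HH^*(\calA_i, \hatB),
\end{tikzcd}
\]
in which the right vertical map is an isomorphism because $\hatPhi$ is a quasi-isomorphism, and the bottom map is an isomorphism by Theorem~C of \cite{SmithHHviaHF}. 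One then \emph{defines} $\phi_i$ as $C^{-1}\circ \rH(\hatTheta)^{-1}\circ \rH(\hatPhi_*)$, so the isomorphism property is built in rather than deduced. Your proposal would be repaired by citing these two isomorphism statements directly instead of the soft uniqueness argument.
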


From these we can deduce the result we really want.

\begin{corollary}
\label{corThmA}
$\COisol : \QH^*(X) \to \HF^*_\isol(\bL, \bL)$ is surjective and induces an algebra decomposition of $\QH^*(X)$ into the kernel times the image.  Moreover, the image is equal to $\Jacisol W_L$ times the unit.
\end{corollary}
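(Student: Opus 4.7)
The strategy is to exhibit $\COisol$ as just a repackaging of the product map \eqref{eqProductCO} from \cref{thmSeveral}, and then read off everything from that corollary together with \cref{lemJacisol} and \cref{propphii}.

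First I would use \cref{lemCommAlg}\ref{itmLocalisation} to observe that since $S_\isol \cong S_{\calL_1} \times \dots \times S_{\calL_r}$ as $S$-algebras, localisation at the finite set $\{\calL_1, \dots, \calL_r\}$ factors as the product of localisations at each $\calL_i$. Applying this to the $S$-algebra $\HF^*_S(\bL, \bL)$ and to the $\kk$-algebra map $\CO^0_{\bL}$, I obtain a canonical identification
\[
\HF^*_\isol(\bL, \bL) \;\cong\; \HF^*_{\calL_1}(\bL, \bL) \times \dots \times \HF^*_{\calL_r}(\bL, \bL)
\]
under which $\COisol$ becomes $\CO^0_{\bL, \calL_1} \times \dots \times \CO^0_{\bL, \calL_r}$.

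Next I would assemble the isomorphisms $\phi_i$ from \cref{propphii} into a single isomorphism
\[
\Phi \;=\; \phi_1 \times \dots \times \phi_r \;:\; \HH^*(\calA_1) \times \dots \times \HH^*(\calA_r) \;\xrightarrow{\,\sim\,}\; \HF^*_\isol(\bL, \bL),
\]
and use the compatibility $\phi_i \circ \CO_{\LL_i} = \CO^0_{\bL, \calL_i}$ to check that $\Phi$ intertwines the product map \eqref{eqProductCO} with $\COisol$. At this point \cref{thmSeveral} does almost all the work: it tells us that \eqref{eqProductCO} is surjective, that its kernel is $\bigcap_i Q_i^\perp$, and that it induces an algebra decomposition of $\QH^*(X)$ into kernel times image. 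Transporting these facts across $\Phi$ gives the analogous statements for $\COisol$.

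Finally, to identify the image explicitly with $\Jacisol W_L \cdot 1$, I would apply \cref{lemJacisol}: since $\HF^*_\isol(\bL, \bL)$ is generated as an $S_\isol$-algebra by the unit and equals $\Jacisol W_L$, and since $\COisol$ is already known to be surjective, the image is automatically all of $\Jacisol W_L \cdot 1$. The only piece of bookkeeping that needs slight care — and this is the step I would expect to be the most finicky — is confirming that the $S_\isol$-algebra structure on $\HF^*_\isol(\bL, \bL)$ factoring through $\COisol$ matches the one coming from localisation of $\HF^*_S(\bL, \bL)$; but this follows from the construction of $\CO^0_\bL$ via pseudoholomorphic discs weighted by their boundary monomials, together with \cref{opposite}, which ensures the target is graded-commutative after localisation and so there is no ambiguity between left and right module structures.
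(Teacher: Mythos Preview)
Your proposal is correct and follows essentially the same route as the paper: identify $\COisol$ with the product $\CO^0_{\bL,\calL_1}\times\dots\times\CO^0_{\bL,\calL_r}$ via $S_\isol \cong S_{\calL_1}\times\dots\times S_{\calL_r}$, compose with the isomorphisms $\phi_i$ of \cref{propphii} to reduce to \eqref{eqProductCO}, invoke \cref{thmSeveral} for surjectivity and the decomposition, and finish with \cref{lemJacisol}. Your final paragraph of bookkeeping is unnecessary: once $\COisol$ is surjective and \cref{lemJacisol} identifies the target with $\Jacisol W_L$ (generated by the unit), the identification of the image is immediate and no compatibility of $S_\isol$-module structures needs checking.
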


\begin{proof}
Apply the maps $\phi_1, \dots, \phi_r$ from \cref{propphii} to the $r$ factors in \eqref{eqProductCO}, to see that
\begin{multline}
\label{eqProductCOLi}
\CO^0_{\bL, \calL_1} \times \dots \times \CO^0_{\bL, \calL_r} : \QH^*(X) \to \HF^*_{\calL_1}(\bL, \bL) \times \dots \times \HF^*_{\calL_r}(\bL, \bL)
\\ = \HF^*_S(\bL, \bL) \otimes_S (S_{\calL_1} \times \dots \times S_{\calL_r})
\end{multline}
is surjective and induces an algebra decomposition of $\QH^*(X)$.  From the description
\[
S_\isol = S_{\calL_1} \times \dots \times S_{\calL_r}
\]
we obtain an identification of the map \eqref{eqProductCOLi} with
\[
\CO^0_{\bL,\isol} : \QH^*(X) \to \HF^*_\isol(\bL, \bL) = \HF^*_S(\bL, \bL) \otimes_S S_\isol,
\]
and by \cref{lemJacisol} the image is $\Jacisol W_L$ times the unit.
\end{proof}

From the definition of $\CO^0_\bL$ we see that, as a map to $\Jacisol W_L$, $\CO^0_\isol$ has exactly the geometric description given below \cref{CorollaryB}.

Having seen the utility of \cref{lemJacisol,propphii}, we now give the promised proofs.

\begin{proof}[Proof of \cref{lemJacisol}]
We want to show that the unique $S_\isol$-algebra map $S_\isol \to \HF^*_\isol(\bL, \bL)$ induces an isomorphism $\Jacisol W_L \to \HF^*_\isol(\bL, \bL)$.  Being an isomorphism can be checked locally at maximal ideals, so it suffices to show that for each $i$ the unique $S_{\calL_i}$-algebra map $S_{\calL_i} \to \HF^*_{\calL_i}(\bL, \bL)$ induces an isomorphism $\Jac_{\calL_i} W_L \to \HF^*_{\calL_i}(\bL, \bL)$.

To compute $\HF_{\calL_i}^*(\bL, \bL)$ we will use the Oh spectral sequence \cite{OhSS}
\[
E_1 = \rH^*(L; S_{\calL_i}) \implies \HF_{\calL_i}^*(\bL, \bL).
\]
This is most easily constructed by using a pearl model for the Floer complex, and filtering the generators (which are critical points of a Morse function) by their Morse index; see \cite{biran2007quantum}.  The $E_1$ differential on the spectral sequence, denoted $\diff_1$, can be described as follows.  Let
\[
W_L = \sum_{\mathclap{\gamma \in \rH_1(L; \ZZ)}} n_\gamma z^\gamma \in S = \kk[\rH_1(L; \ZZ)].
\]
Recall that each $n_\gamma$ is an integer counting the number of index $2$ discs with boundary in class $\gamma$, and the sum is finite by Gromov compactness.  Given a class $b \in \rH^1(L; S_{\calL_i})$, by definition $\diff_1 b$ computes the Morse index $0$ part of the pearl differential of $b$.  And this can be calculated geometrically to be
\[
\sum_{\gamma} \langle \gamma, b \rangle n_\gamma z^{\gamma}
\]
(times the unit), where $\langle \cdot, \cdot \rangle$ is the pairing between $\rH_1(L)$ and $\rH^1(L)$.

Now fix a basis $e_1, \dots, e_n$ for $\rH_1(L; \ZZ)$, with respect to which each $\gamma$ has components $(\gamma_1, \dots, \gamma_n)$, let $z_i$ be the monomial $z^{e_i}$ so that $S = \kk[z_1^{\pm1}, \dots, z_n^{\pm1}]$, and let $b_1, \dots, b_n$ be the dual basis for $\rH^1(L; \ZZ)$.  We then deduce that
\[
\diff_1 (z_i^{-1}b_i) = \sum_\gamma \langle \gamma, z_i^{-1}b_i \rangle n_\gamma z^\gamma = \sum_\gamma \gamma_i n_\gamma z_1^{\gamma_1} \cdots z_i^{\gamma_i - 1} \cdots z_n^{\gamma_n} = \frac{\partial W_L}{\partial z_i}.
\]
Since $E_1 = \rH^*(L; S_{\calL_i})$ can be viewed as the exterior algebra over $S_{\calL_i}$ on generators $z_i^{-1}b_i$, the differential $\diff_1$ is then completely determined by the Leibniz rule.  Explicitly, $(E_1, \diff_1)$ looks like the Koszul complex associated to $(\partial W_L / \partial z_1, \dots, \partial W_L / \partial z_n)$.

To complete the proof it therefore suffices to show that $(\partial W_L / \partial z_1, \dots, \partial W_L / \partial z_n)$ is a regular, hence Koszul-regular, sequence in $S_{\calL_i}$; then the cohomology of the Koszul complex is concentrated in degree zero, where it is manifestly $\Jac_{\calL_i} W_L$.  Since $S_{\calL_i}$ is a regular local ring of Krull dimension $n$, and $\Jac_{\calL_i} W_L$ has Krull dimension $0$ because $\calL_i$ is an isolated critical point, regularity of the sequence $(\partial W_L / \partial z_1, \dots, \partial W_L / \partial z_n)$ follows from \cite[Exercise 26.2.D]{VakilAG}.
\end{proof}

\begin{proof}[Proof of \cref{propphii}]
Throughout the proof we fix an $\calL_i$ and drop all subscript $i$'s, so we're in the setup of \cref{sscLMF}.  Recall from that subsection that the localised mirror functor induces a quasi-isomorphism $\Phi : \calA \to \calB$, where $\calB$ is the endomorphism algebra of the matrix factorisation $\calE$.  This in turn induces an isomorphism $\HH^*(\calA) \to \HH^*(\calA, \calB)$, as in \eqref{eqHHmaps}, which we denote by $\rH(\Phi_*)$.

By restricting \cite[Theorem B]{SmithHHviaHF} to $\LL$ we obtain a commutative diagram combining $\rH(\Phi_*)$ with the maps $\CO_\LL$ and $\CO^0_\bL$:
\begin{equation*}
\begin{tikzcd}[column sep=5em, row sep=2.5em]
\QH^*(X) \arrow{d}{\CO^0_\bL} \arrow{r}{\CO_{\LL}} & \HH^*(\calA) \arrow{d}{\rH(\Phi_*)}
\\ \HF_S^*(\bL, \bL)^\op \arrow{r}{\rH(\Theta_{\LL})} & \HH^*(\calA, \calB).
\end{tikzcd}
\end{equation*}
Here $\Theta_{\LL}$ is a certain cohomologically unital $A_\infty$-algebra homomorphism
\[
\CF^*_S(\bL, \bL)^\op \to \rCC^*(\calA, \calB),
\]
and $\HF_S^*(\bL, \bL)^\op$ is the graded-opposite algebra of $\HF_S^*(\bL, \bL)$; recall \cref{opposite}.

The constructions of $\HF^*_S(\bL, \bL)$, $\CO^0_\bL$, and $\Theta_{\LL}$ still make sense if we work over the completion $\hatS$ of $S$ at the maximal ideal $\m$ corresponding to $\calL$, and we denote the corresponding objects and maps by adding hats.  Recall from \cref{lemBqis} that the natural map $\calB \to \hatB$ is a quasi-isomorphism, so $\rH(\hatPhi_*)$ is still an isomorphism.  The virtue of passing to these completions is that $\rH(\hatTheta)$ is an isomorphism \cite[Theorem C]{SmithHHviaHF}.  We deduce that there is a commutative diagram
\begin{equation*}
\label{eqCompletedDiagram}
\begin{tikzcd}[column sep=5em, row sep=2.5em]
\QH^*(X) \arrow{d}{\hatCO} \arrow{r}{\CO_{\LL}}& \HH^*(\calA) \arrow{d}{\rH(\hatPhi_*)}
\\ \HF_{\hatS}^*(\bL, \bL)^\op \arrow{r}{\rH(\hatTheta)} & \HH^*(\calA, \hatB)
\end{tikzcd}
\end{equation*}
in which the bottom and right-hand arrows are isomorphisms.

The completion map $S_\calL \to \hatS$ on coefficients induces a map
\[
C : \HF^*_\calL(\bL, \bL) \to \HF^*_{\hatS}(\bL, \bL)
\]
($C$ for `completion').  We claim that this is an isomorphism and satisfies $\hatCO = C \circ \CO^0_{\bL, \calL}$.  (We can safely drop all mention of opposite algebras here---see \cref{rmkDropOpposite}.) Then
\[
C^{-1} \circ \rH(\hatTheta)^{-1} \circ \rH(\hatPhi_*)
\]
defines the desired isomorphism $\phi : \HH^*(\calA) \to \HF^*_\calL(\bL, \bL)$.

To prove the claimed properties of $C$, recall from \cref{lemCommAlg}\ref{itmCompletion} that (by exactness of completion) $\HF^*_{\hatS}(\bL, \bL)$ may be viewed as the $\m$-adic completion of $\HF^*_S(\bL, \bL)$, and that this is naturally isomorphic to $\HF^*_S(\bL, \bL) \otimes_S \hatS$.  More precisely, the natural map
\[
\HF^*_S(\bL, \bL) \otimes_S \hatS \to \HF^*_{\hatS}(\bL, \bL)
\]
is an isomorphism, and $\hatCO$ is $\CO^0_\bL$ tensored with $\hatS$.  Similarly, from \cref{defHFisol} (which uses \cref{lemCommAlg}\ref{itmLocalisation}) the natural map
\[
\HF^*_S(\bL, \bL) \otimes_S S_\calL \to \HF^*_\calL(\bL, \bL)
\]
is an isomorphism, and $\CO^0_{\bL, \calL}$ is $\CO^0_\bL$ tensored with $S_\calL$.  We conclude that $C$ corresponds to tensoring with $\hatS$ over $S_\calL$, and that it satisfies $\hatCO = C \circ \CO^0_{\bL, \calL}$.  It remains to show that $C$ is an isomorphism, or equivalently that the natural map
\[
\HF^*_\calL(\bL, \bL) \to \HF^*_\calL(\bL, \bL) \otimes_{S_\calL} \hatS
\]
is an isomorphism.

To do this, we first use \cref{lemCommAlg}\ref{itmCompletion} again to interpret $\HF^*_\calL(\bL, \bL) \otimes_{S_\calL} \hatS$ as the $\m$-adic completion of $\HF^*_\calL(\bL, \bL)$.  It is then left to show that $\HF^*_\calL(\bL, \bL)$ is $\m$-adically complete.  For this, recall from the proof of \cref{lemJacisol} that $\HF^*_\calL(\bL, \bL) \cong \Jac_\calL W_L$, which is Artinian since $\calL$ is isolated.  We then have by \cite[Proposition 8.6]{AtiyahMacdonald} that the ideal $\m \Jac_\calL W_L$ in $\Jac_\calL W_L$ is nilpotent.  Thus $\Jac_\calL W_L$, and hence $\HF^*_\calL(\bL, \bL)$, is indeed $\m$-adically complete, as wanted.
\end{proof}

\begin{remark}
\label{rmkDropOpposite}
The above discussion shows that $\HF^*_\calL(\bL, \bL)$ and $\HF^*_{\hatS}(\bL, \bL)$ are both isomorphic to $\Jac_\calL W_L$, which is (graded-)commutative.  Therefore they both naturally coincide with their opposite algebras.
\end{remark}

\begin{remark}
One can use these methods to give an alternative proof of \cref{propAHH}, computing of $\HH^*(\calA)$.  Similarly, one can reprove \cref{thmGen}\ref{genEspace}, namely that $\OC_\lambda(\HH_*(\calA))$ lies in the generalised $\lambda$-eigenspace of $c_1 \qprod$, in the case where $L$ is a torus.
\end{remark}


\section{Worked examples}
\label{secExamples}


\subsection{The cubic surface}
\label{exCubic}

Let $X$ be the cubic surface, or equivalently the monotone $6$-point blowup of $\CC\PP^2$.  Our goal is to constrain the possible superpotentials of monotone tori in $X$ using \cref{corLocalFactors}, so our first task is to decompose $\QH^*(X)$ as a product of local rings.  This quantum cohomology ring was computed by Crauder--Miranda \cite{CrauderMiranda} and an explicit presentation is given by Sheridan in \cite[Proposition B.1]{SheridanFano}.

Viewing $X$ as $\operatorname{Bl}_6 \CC\PP^2$, $\QH^*(X)$ is generated as an algebra by the classes $H, E_1, \dots, E_6$ of a hyperplane and of the six exceptional spheres respectively.  A basis is given by $1, H, P, E_1, \dots, E_6$, where $P$ is the point class, and with respect to this basis quantum multiplication by $H$ and $E_1$ have matrices
{\tiny
\[
H \qprod = \begin{pmatrix} 0 & 120 & 252 & 42 & 42 & 42 & 42 & 42 & 42\\ 1 & 63 & 120 & 25 & 25 & 25 & 25 & 25 & 25\\ 0 & 1 & 0 & 0 & 0 & 0 & 0 & 0 & 0 \\ 0 & -25 & -42 & -15 & -9 & -9 & -9 & -9 & -9\\ 0 & -25 & -42 & -9 & -15 & -9 & -9 & -9 & -9\\ 0 & -25 & -42 & -9 & -9 & -15 & -9 & -9 & -9\\ 0 & -25 & -42 & -9 & -9 & -9 & -15 & -9 & -9\\ 0 & -25 & -42 & -9 & -9 & -9 & -9 & -15 & -9\\ 0 & -25 & -42 & -9 & -9 & -9 & -9 & -9 & -15 \end{pmatrix} \quad \text{\normalsize and} \quad E_1 \qprod = \begin{pmatrix} 0 & 42 & 84 & 20 & 14 & 14 & 14 & 14 & 14\\ 0 & 25 & 42 & 15 & 9 & 9 & 9 & 9 & 9\\ 0 & 0 & 0 & -1 & 0 & 0 & 0 & 0 & 0\\ 1 & -15 & -20 & -9 & -5 & -5 & -5 & -5 & -5\\ 0 & -9 & -14 & -5 & -5 & -3 & -3 & -3 & -3\\ 0 & -9 & -14 & -5 & -3 & -5 & -3 & -3 & -3\\ 0 & -9 & -14 & -5 & -3 & -3 & -5 & -3 & -3\\ 0 & -9 & -14 & -5 & -3 & -3 & -3 & -5 & -3\\ 0 & -9 & -14 & -5 & -3 & -3 & -3 & -3 & -5 \end{pmatrix}.
\]
}
One can compute (we used Macaulay2 for this) that these maps have a common eigenvector
\[
V = 48+21H+P-7(E_1+\dots+E_6),
\]
with eigenvalue $21$ and $7$ respectively.  By symmetry, $V$ is an eigenvector for every $E_i\qprod$, with eigenvalue $7$.  One can also compute that on the orthogonal complement of $V$ the elements $H+6$ and $E_i+2$ are nilpotent.  We conclude that if $\Char \kk \neq 3$ then there are two maximal ideals in $\QH^*(X)$, namely
\[
(H-21, E_1 - 7, \dots, E_6 - 7) \quad \text{and} \quad (H+6, E_1+2, \dots, E_6+2),
\]
whilst if $\Char \kk = 3$ then these two ideals coincide and are the unique maximal ideal in $\QH^*(X)$.  So in the former case $\QH^*(X)$ has two local factors, say $\calQ_1$ (spanned by $V$) and $\calQ_2$ (its orthogonal complement), whilst in the latter case $\QH^*(X)$ is already local and does not decompose further.  Let us restrict attention to orientable Lagrangians, so that the second part of \cref{rmkArtinian} applies and we can refer to the summands of the Fukaya category as $\Fuk(X)_{\calQ_1} = \Fuk(X)_{21, \calQ_1}$ and $\Fuk(X)_{\calQ_2} = \Fuk(X)_{-6, \calQ_2}$ outside characteristic $3$, and $\Fuk(X) = \Fuk(X)_{0, \QH^*(X)}$ in characteristic $3$.  Sheridan works over $\CC$ and refers to $\Fuk(X)_{\calQ_1}$ and $\Fuk(X)_{\calQ_2}$ as the small and large summands of the category respectively.

Now viewing $X$ as a cubic surface in $\CC\PP^3$, it can be degenerated to the \emph{Cayley cubic} $X_0$ defined by
\[
\{[w:x:y:z] \in \CC\PP^3 : xyz+ wyz + wxz + wxy = 0\}.
\]
$X_0$ has four ordinary double points, so the degeneration gives rise to four pairwise disjoint Lagrangian spheres $S_1, \dots, S_4$ in $X$ as vanishing cycles.  They are automatically monotone because they are simply-connected, and satisfy $\HF^*(S_i, S_j) = 0$ for $i \neq j$ since $S_i$ and $S_j$ are disjoint.  Moreover, degree considerations in the Oh spectral sequence $\rH^*(S_i) \implies \HF^*(S_i, S_i)$ show that $\HF^*(S_i, S_i) \neq 0$.  The $S_i$ thus constitute four orthogonal non-zero objects in the monotone Fukaya category.

Suppose $\Char \kk \neq 3$.  The existence of the $S_i$ means that at least one of $\Fuk(X)_{\calQ_1}$ and $\Fuk(X)_{\calQ_2}$ contains orthogonal non-zero objects, so in particular contains a non-zero object which doesn't split-generate it.  It cannot be $\Fuk(X)_{\calQ_1}$, since $\calQ_1$ is $1$-dimensional, so it must be $\Fuk(X)_{\calQ_2}$.  Thus, by \cref{TheoremD}, $\Fuk(X)_{\calQ_2}$ cannot be a toroidal subcategory in the sense of \cref{defToroidal}.  We deduce that in \cref{corLocalFactors} only the factor $\calQ_1$ can be hit.

Suppose instead that $\Char \kk = 3$.  Now \cref{TheoremD} tell us that that full Fukaya category cannot be toroidal, so in \cref{corLocalFactors} there are no factors to hit.

Combining these observations proves \cref{corCubic}.


\subsection{The quadric threefold}
\label{exQuadric}

Consider again the quadric threefold $X$ from \cref{exQuadricQH}.  We saw there that if $\Char \kk \neq 2$ then
\begin{equation}
\label{eqQuadricSplit}
\QH^*(X) \cong \Jac W_L \times \kk
\end{equation}
for a specific monotone torus $L$, and that $\Jac W_L$ is local if $\Char \kk = 3$ and isomorphic to $\kk^{\times 3}$ otherwise.  We conclude that for any monotone torus in $X$, its potential can have at most two isolated critical points in characteristic $3$ (one non-degenerate and the other with local Jacobian ring $\kk[y]/(y-1)^3$), and at most four isolated critical points, all of which must be non-degenerate, if $\Char \kk \neq 2, 3$.

If $\Char \kk = 2$ then $\QH^*(X)$ is local, isomorphic to $\kk[E] / (E^4)$, where $E$ is a generator of $\rH^4(X; \ZZ)$ and satisfies $E^2 = H$ (a hyperplane class, as before).  This is well-known, but can be computed by combining our previous work with knowledge of the classical cohomology ring
\[
\rH^*(X; \ZZ) = \ZZ[H, E] / (H^2 - 2E, E^2),
\]
as follows.  For degree reasons, in $\QH^*(X; \ZZ)$ we must have $H^2 = 2E$ and $E^2 = \kappa H$ for some $\kappa \in \ZZ$.  We obtain
\[
\QH^*(X; \QQ) = \QQ[H, E] / (H^2 - 2E, E^2 - \kappa H) \cong \QQ[H] / (H^4 - 4\kappa H),
\]
and then the $\kk = \QQ$ case of \cref{exQuadricQH} gives $4 \kappa = \nu = 4$, so $\kappa = 1$.  Therefore
\[
\QH^*(X; \ZZ) = \ZZ[H, E] / (H^2 - 2E, E^2 - H).
\]
So in characteristic $2$ we get $\kk[E] / (E^4)$, as claimed, and we see that the potential of a monotone torus can have at most one isolated critical point, with this algebra as its local Jacobian ring.

Going further, it was shown by Ivan Smith \cite[Lemma 4.6]{SmithQuadrics} over $\CC$, and by Evans--Lekili \cite[Section 7.1]{EvansLekiliGeneration} over arbitrary $\kk$, that the summand of the Fukaya category not split-generated by the torus $L$ is split-generated by the Lagrangian sphere $V$ appearing in \cref{corQuadric}.  If $\Char \kk \neq 2$ then this is the summand of the category corresponding to the $\kk$ factor in \eqref{eqQuadricSplit}, whilst if $\Char \kk = 2$ then it is the whole category (we'll assume that we only allow orientable Lagrangians so \cref{rmkArtinian} applies).  If this summand of the category had a toroidal generator $T$ then, by \cref{TheoremD}, $T$ would split-generate $V$, so in particular $T$ and $V$ would have to intersect.  So if we take a monotone torus disjoint from $V$ then in \cref{corLocalFactors} the factor of $\QH^*(X)$ corresponding to $V$ cannot be hit .

The upshot of these discussions is \cref{corQuadric}.


\subsection{The $4$-point blowup of $\CC\PP^2$}

Recall from \cref{ex4Blowup} that the monotone blowup $X = \operatorname{Bl}_4 \CC\PP^2$ of $\CC\PP^2$ at four points contains a monotone torus $L$ for which $\Jac W_L$ fills out a $3$-dimensional piece of the $7$-dimensional quantum cohomology algebra.  In \cite{Sanda} Sanda computed the $4$-dimensional complement to $\Jac W_L$ inside $\QH^*(X)$ using Lagrangian spheres, but we now explain how it's possible to use just tori, as follows.  This is somewhat orthogonal to the main ideas of the paper, but illustrates how one can sometimes combine critical points of different tori.

Pascaleff--Tonkonog \cite[Section 4]{PascaleffTonkonog} explain how $L$ can be \emph{mutated} along certain vectors in $\ZZ^2$ called \emph{mutation directions} to produce a new monotone torus whose superpotential is obtained from $W_L$ by a prescribed change of variables (the \emph{wall-crossing formula}).  Moreover, the mutation directions can themselves be mutated, and the new torus can be further mutated along the mutated directions.  This whole process can then be iterated as desired.  We will be interested in the mutation of $L$ along $(-1, 0)$, and the mutation of the new torus along the mutation of $(0, -1)$, which is just $(0, -1)$ itself.  We will similarly be interested in the mutation of $L$ along $(0, -1)$, and the mutation of the new torus along the mutation of $(-1, 0)$, which is $(-1, -1)$.  We use variables $(u, v)$, $(s, t)$, $(u', v')$, and $(s', t')$ for the potentials of these four tori respectively.  This is summarised, along with the corresponding changes of variable, in \cref{figMutation}.
\begin{figure}[ht]
\begin{tikzpicture}
\tikzset{blob/.style={circle, fill, inner sep=2pt}}
\def\a{-2.5}
\def\b{0}
\def\c{-5}
\def\d{0}
\draw (0, 0) node[blob, label=above:{$(x, y)$}]{};
\draw (\a, \b) node[blob, label=above:{$(u, v)$}]{};
\draw (\c, \d) node[blob, label=above:{$(s, t)$}]{};
\draw (-\a, \b) node[blob, label=above:{$(u', v')$}]{};
\draw (-\c, \d) node[blob, label=above:{$(s', t')$}]{};

\draw[postaction={decorate}, decoration={markings, mark=at position 0.5 with {\arrow{>}}}] (0,0) -- (\a, \b) node[pos=0.5, below]{$(-1, 0)$};
\draw[postaction={decorate}, decoration={markings, mark=at position 0.5 with {\arrow{>}}}] (\a, \b) -- (\c, \d) node[pos=0.5, below]{$(0, -1)$};
\draw[postaction={decorate}, decoration={markings, mark=at position 0.5 with {\arrow{>}}}] (0,0) -- (-\a, \b) node[pos=0.5, below]{$(0, -1)$};
\draw[postaction={decorate}, decoration={markings, mark=at position 0.5 with {\arrow{>}}}] (-\a, \b) -- (-\c,\d) node[pos=0.5, below]{$(-1, -1)$};
\end{tikzpicture}

{\footnotesize
\begin{gather*}
\lb s+t+st, \frac{t(1+s)}{s}\rb = \lb u(1+v), v\rb = \textcolor{blue}{\boldsymbol{(x, y)}} = \lb u', \frac{v'(1+u')}{u'}\rb = \lb s'+t', \frac{t'(1+s'+t')}{s'}\rb \\[1ex]
\lb s, \frac{t(1+s)}{s} \rb = \textcolor{blue}{\boldsymbol{(u, v)}} = \lb \frac{x}{1+y}, y\rb = \lb \frac{u'^2}{u'+v'+u'v'}, \frac{v'(1+u')}{u'}\rb = \lb \frac{s'}{1+t'}, \frac{t'(1+s'+t')}{s'} \rb \\[1ex]
\lb s+t+st, \frac{t(s+t+st)}{s(1+t)}\rb = \lb u(1+v), \frac{uv(1+v)}{1+u+uv} \rb = \lb x, \frac{xy}{1+x}\rb = \textcolor{blue}{\boldsymbol{(u', v')}} = \lb s'+t', \frac{t'(s'+t')}{s'}\rb \\[1ex]
\textcolor{blue}{\boldsymbol{(s, t)}} = \lb u, \frac{uv}{1+u} \rb = \lb \frac{x}{1+y}, \frac{xy}{1+x+y} \rb = \lb \frac{u'^2}{u'+v'+u'v'}, \frac{u'v'}{u'+v'}\rb = \lb \frac{s'}{1+t'}, t' \rb\\[1ex]
(s(1+t), t) = \lb \frac{u(1+u+uv)}{1+u}, \frac{uv}{1+u} \rb = \lb \frac{x(1+x)}{1+x+y}, \frac{xy}{1+x+y} \rb = \lb \frac{u'^2}{u'+v'}, \frac{u'v'}{u'+v'}\rb = \textcolor{blue}{\boldsymbol{(s', t')}}
\end{gather*}
}
\caption{Mutations and changes of variable.\label{figMutation}}
\end{figure}
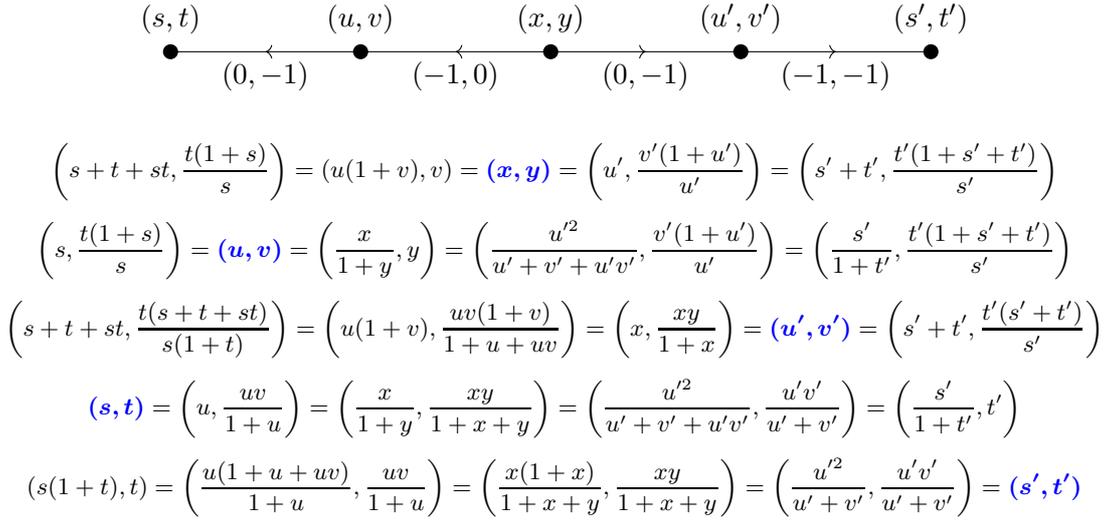

The superpotential of each of these mutated tori has a critical point not visible to $L$---in the sense that it cannot be obtained from a critical point of $W_L$ by the relevant change of variables---or to any of the other tori we're considering.  Explicitly, these new critical points are
\[
(u, v) = (-1, -1), \quad (u', v') = (-1, 1), \quad (s, t) = (-1, -1), \quad \text{and} \quad (s', t') = (1, -1).
\]
Each of these critical points gives rise to a factor of $\kk$ in $\QH^*(X)$.  Moreover, these are all orthogonal to each other and to the $3$-dimensional piece of $\QH^*(X)$ coming from $L$, for the following reason.  Let $\LL_1$ and $\LL_2$ be two distinct torus objects among those that we're considering.  It suffices, as in \cref{propIdempotentsOrthogonal}, show that $\HF^*(\LL_1, \LL_2) = 0$ whenever it makes sense (i.e.~whenever $\LL_1$ and $\LL_2$ lie in the same $\Fuk(X)_\lambda$).  The proof of the wall-crossing formula in \cite{PascaleffTonkonog} involves computing Floer cohomology $\HF^*_U(\LL_1, \LL_2)$ in a Liouville domain $U \subset X$, and because our critical points are not related by the changes of variables we have $\HF_U^*(\LL_1, \LL_2) = 0$.  But there is a spectral sequence from $\HF_U^*(\LL_1, \LL_2)$ to $\HF^*(\LL_1, \LL_2)$, which accounts for the contributions of holomorphic strips exiting $U$, so we conclude that $\HF^*(\LL_1, \LL_2) = 0$ as wanted.  We therefore have
\[
\QH^*(X) \cong \underbrace{\kk \times \kk[\xi]/(\xi^2-(\xi+1))}_L \times \underbrace{\kk \times \kk \times \kk \times \kk}_\text{mutated tori}.
\]

\bibliography{QHFukBib}

\begin{thebibliography}{10}

\bibitem{AbouzaidGeometricCriterion}
{\sc M.~Abouzaid}, {\em A geometric criterion for generating the {Fukaya}
  category}, Publ. Math. Inst. Hautes \'{E}tudes Sci.,  (2010), pp.~191--240.

\bibitem{AtiyahMacdonald}
{\sc M.~F. Atiyah and I.~G. Macdonald}, {\em Introduction to commutative
  algebra}, Addison-Wesley Publishing Co., Reading, Mass.-London-Don Mills,
  Ont., 1969.

\bibitem{AurouxTDuality}
{\sc D.~Auroux}, {\em Mirror symmetry and {$T$}-duality in the complement of an
  anticanonical divisor}, J. G\"{o}kova Geom. Topol. GGT, 1 (2007), pp.~51--91.

\bibitem{biran2007quantum}
{\sc P.~Biran and O.~Cornea}, {\em Quantum structures for {Lagrangian}
  submanifolds}, 2007.

\bibitem{BiranCorneaLTEG}
{\sc P.~Biran and O.~Cornea}, {\em Lagrangian topology and enumerative
  geometry}, Geom. Topol., 16 (2012), pp.~963--1052.

\bibitem{BormanElAlamiSheridanMaurerCartan}
{\sc M.~S. Borman, M.~E. Alami, and N.~Sheridan}, {\em {Maurer--Cartan}
  elements in symplectic cohomology from compactifications}, 2024.

\bibitem{BormanSheridanVarolgunes}
{\sc M.~S. Borman, N.~Sheridan, and U.~Varolgunes}, {\em Quantum cohomology as
  a deformation of symplectic cohomology}, J. Fixed Point Theory Appl., 24
  (2022), pp.~Paper No. 48, 77.

\bibitem{Cho}
{\sc C.-H. Cho}, {\em Holomorphic discs, spin structures, and {F}loer
  cohomology of the {C}lifford torus}, Int. Math. Res. Not.,  (2004),
  pp.~1803--1843.

\bibitem{ChoHongLauTorus}
{\sc C.-H. Cho, H.~Hong, and S.-C. Lau}, {\em Localized mirror functor
  constructed from a {Lagrangian} torus}, J. Geom. Phys., 136 (2019),
  pp.~284--320.

\bibitem{ChoOh}
{\sc C.-H. Cho and Y.-G. Oh}, {\em Floer cohomology and disc instantons of
  {L}agrangian torus fibers in {F}ano toric manifolds}, Asian J. Math., 10
  (2006), pp.~773--814.

\bibitem{CrauderMiranda}
{\sc B.~Crauder and R.~Miranda}, {\em Quantum cohomology of rational surfaces},
  in The moduli space of curves ({T}exel {I}sland, 1994), vol.~129 of Progr.
  Math., Birkh\"auser Boston, Boston, MA, 1995, pp.~33--80.

\bibitem{Dyckerhoff}
{\sc T.~Dyckerhoff}, {\em Compact generators in categories of matrix
  factorizations}, Duke Math. J., 159 (2011), pp.~223--274.

\bibitem{EvansLekiliGeneration}
{\sc J.~D. Evans and Y.~Lekili}, {\em Generating the {F}ukaya categories of
  {H}amiltonian {$G$}-manifolds}, J. Amer. Math. Soc., 32 (2019), pp.~119--162.

\bibitem{FOOOMirror}
{\sc K.~Fukaya, Y.-G. Oh, H.~Ohta, and K.~Ono}, {\em Lagrangian {F}loer theory
  and mirror symmetry on compact toric manifolds}, Ast\'erisque,  (2016),
  pp.~vi+340.

\bibitem{GanatraAutomaticGeneration}
{\sc S.~Ganatra}, {\em Automatically generating {Fukaya} categories and
  computing quantum cohomology}.

\bibitem{GanatraPerutzSheridan}
{\sc S.~Ganatra, T.~Perutz, and N.~Sheridan}, {\em Mirror symmetry: from
  categories to curve counts}, 2015.

\bibitem{LinPomerleano}
{\sc K.~H. Lin and D.~Pomerleano}, {\em Global matrix factorizations}, Math.
  Res. Lett., 20 (2013), pp.~91--106.

\bibitem{NishinouNoharaUeda}
{\sc T.~Nishinou, Y.~Nohara, and K.~Ueda}, {\em Potential functions via toric
  degenerations}, Proc. Japan Acad. Ser. A Math. Sci., 88 (2012), pp.~31--33.

\bibitem{OhSS}
{\sc Y.-G. Oh}, {\em Floer cohomology, spectral sequences, and the {M}aslov
  class of {L}agrangian embeddings}, Internat. Math. Res. Notices,  (1996),
  pp.~305--346.

\bibitem{PascaleffTonkonog}
{\sc J.~Pascaleff and D.~Tonkonog}, {\em The wall-crossing formula and
  {L}agrangian mutations}, Adv. Math., 361 (2020), pp.~106850, 67.

\bibitem{RitterSmith}
{\sc A.~F. Ritter and I.~Smith}, {\em The monotone wrapped {F}ukaya category
  and the open-closed string map}, Selecta Math. (N.S.), 23 (2017),
  pp.~533--642.

\bibitem{Sanda}
{\sc F.~Sanda}, {\em Computation of quantum cohomology from {F}ukaya
  categories}, Int. Math. Res. Not. IMRN,  (2021), pp.~769--803.

\bibitem{SeidelGraded}
{\sc P.~Seidel}, {\em Graded {L}agrangian submanifolds}, Bull. Soc. Math.
  France, 128 (2000), pp.~103--149.

\bibitem{SeidelBook}
\leavevmode\vrule height 2pt depth -1.6pt width 23pt, {\em Fukaya categories
  and {Picard}--{Lefschetz} theory}, Zurich Lectures in Advanced Mathematics,
  European Mathematical Society (EMS), Z\"urich, 2008.

\bibitem{SheridanFano}
{\sc N.~Sheridan}, {\em On the {Fukaya} category of a {Fano} hypersurface in
  projective space}, Publ. Math. Inst. Hautes \'{E}tudes Sci., 124 (2016),
  pp.~165--317.

\bibitem{SheridanHodge}
\leavevmode\vrule height 2pt depth -1.6pt width 23pt, {\em Formulae in
  noncommutative {H}odge theory}, J. Homotopy Relat. Struct., 15 (2020),
  pp.~249--299.

\bibitem{ShklyarovSerre}
{\sc D.~Shklyarov}, {\em On {Serre} duality for compact homologically smooth
  {DG} algebras}, 2007.

\bibitem{ShklyarovHirzebruch}
{\sc D.~Shklyarov}, {\em Hirzebruch-{R}iemann-{R}och-type formula for {DG}
  algebras}, Proc. Lond. Math. Soc. (3), 106 (2013), pp.~1--32.

\bibitem{SmithQuadrics}
{\sc I.~Smith}, {\em Floer cohomology and pencils of quadrics}, Invent. Math.,
  189 (2012), pp.~149--250.

\bibitem{SmithSuperfiltered}
{\sc J.~Smith}, {\em Superfiltered {$A_\infty$}-deformations of the exterior
  algebra, and local mirror symmetry}, Journal of the London Mathematical
  Society, 105 (2022), pp.~2203--2248.

\bibitem{SmithHHviaHF}
\leavevmode\vrule height 2pt depth -1.6pt width 23pt, {\em Hochschild
  cohomology of the {Fukaya} category via {Floer} cohomology with
  coefficients}, 2023.

\bibitem{stacks-project}
{\sc T.~{Stacks project authors}}, {\em The stacks project}.
\newblock \url{https://stacks.math.columbia.edu}, 2023.

\bibitem{TonkonogSHtoLEG}
{\sc D.~Tonkonog}, {\em From symplectic cohomology to {L}agrangian enumerative
  geometry}, Adv. Math., 352 (2019), pp.~717--776.

\bibitem{TonkonogDescendants}
{\sc D.~Tonkonog}, {\em String topology with gravitational descendants, and
  periods of {Landau--Ginzburg} potentials}, 2019.

\bibitem{VakilAG}
{\sc R.~Vakil}, {\em The rising sea: Foundations of algebraic geometry notes}.
\newblock November 18, 2017 draft.

\end{thebibliography}
\bibliographystyle{siam}

\end{document}